\DeclareMathOperator*{\argmax}{argmax}
\DeclareMathOperator*{\argmin}{argmin}
\newtheorem{theorem}{Theorem}[section]
\newtheorem{proposition}[theorem]{Proposition}
\newtheorem{lemma}[theorem]{Lemma}
\newtheorem{corollary}[theorem]{Corollary}
\newtheorem{example}[theorem]{Example}
\newtheorem{definition}[theorem]{Definition}
\newtheorem{assumption}[theorem]{Assumption}
\newtheorem{remark}[theorem]{Remark}
\def\floor#1{\lfloor #1 \rfloor}
\def\paragraph{\@startsection{paragraph}{4}%
  \z@\z@{-\fontdimen2\font}%
  {\normalfont\bfseries}}
\begin{document}

\title{Mirror Descent-Ascent for mean-field min-max problems}
\author{Razvan-Andrei Lascu}
\address{Center for Advanced Intelligence Project, RIKEN, Tokyo, Japan}
\email{razvan-andrei.lascu@riken.jp}

\author{Mateusz B. Majka}
\address{School of Mathematical and Computer Sciences, Heriot-Watt University, Edinburgh, UK, and Maxwell Institute for Mathematical Sciences, Edinburgh, UK}
\email{m.majka@hw.ac.uk}

\author{\L ukasz Szpruch}
\address{School of Mathematics, University of Edinburgh, UK, and The Alan Turing Institute, UK}
\email{l.szpruch@ed.ac.uk}

\begin{abstract}
We study two variants of the mirror descent-ascent (MDA) algorithm for solving min-max problems on the space of measures: simultaneous and alternating. We work under assumptions of convexity-concavity and relative smoothness of the payoff function with respect to a suitable Bregman divergence, defined on the space of measures via flat derivatives. We establish non-asymptotic convergence rates to mixed Nash equilibria, measured in the Nikaid\^{o}-Isoda error, proving an $\mathcal{O}(N^{-1/2})$ rate for simultaneous MDA and an improved $\mathcal{O}(N^{-2/3})$ rate for alternating MDA. The main technical contribution is an infinite-dimensional dual space analysis that relates Bregman divergences on measures to dual Bregman divergences on spaces of bounded continuous functions, allowing us to control asymmetric commutator terms created by alternating updates. The results substantially generalize prior analyses restricted to bilinear objectives and also apply to nonlinear convex-concave problems on measure spaces, thereby providing a unified theoretical foundation for MDA in mean-field min-max optimization.
\end{abstract}

\maketitle
\section{Introduction}
\label{sec:Introduction}
Numerous tasks in machine learning can be framed as optimization problems for functions defined on the space of probability measures. For instance, in supervised learning, \cite{Chizat2018OnTG,Mei2018AMF,Rotskoff2018NeuralNA} showed that training a shallow neural network (NN) in the mean-field regime, i.e., an infinite-width one-hidden-layer NN, can be viewed as minimizing a convex function over the space of probability distributions of the network parameters. This observation has become a central tool for analysing the convergence of training algorithms for mean-field NNs; see, e.g., \cite{10.1214/20-AIHP1140, chizat2022meanfield, Nitanda2022ConvexAO, suzuki2023meanfield}. The same mean-field optimization paradigm has also been extended to min-max settings, where the objective is to compute mixed Nash equilibria (MNEs) of games over spaces of probability measures; see, e.g., \cite{pmlr-v97-hsieh19b, NEURIPS2020_e97c864e, wang2023exponentially, yulong, trillos2023adversarial, kim2024symmetric, pmlr-v291-cai25a}.

In this work, we study the convergence of mirror descent-ascent (MDA) algorithms for convex-concave min-max games over probability measures. The central question we address is how the order of play affects convergence in the infinite-dimensional mean-field setting. In games, learning dynamics depend crucially on whether players update simultaneously, with both players moving from the same current state, or alternately, with one player updating after observing the other player's move. To our knowledge, existing discrete-time convergence guarantees for mean-field min-max games focus on simultaneous updates; see, e.g., \cite{pmlr-v97-hsieh19b, wang2023exponentially}. We provide a rigorous comparison between simultaneous and alternating MDA and prove that alternating play can achieve a strictly faster convergence rate.

The mean-field perspective offers a tractable route to analyzing particle algorithms whose finite-particle implementations evolve in high-dimensional spaces. Rather than tracking particles directly, one studies the infinite-particle limit and derives guarantees at the level of probability measures. This viewpoint makes it possible to isolate the effect of algorithmic choices, such as simultaneous versus alternating updates, on the evolution of the players' mixed strategies. Our analysis shows that this distinction is not merely algorithmic. In the alternating scheme, the second player uses additional information from the first player's updated strategy, and this changes the structure of the error terms. Controlling these terms requires a dual-space argument for Bregman divergences on measures and leads to an improved rate for alternating MDA.

Our results provide a theoretical justification for alternating updates in mean-field min-max optimization. In Section \ref{example:GAN-example}, beyond GANs, we discuss applications to learning MNEs in preference models related to Reinforcement Learning from Human Feedback (RLHF). We also show that the analysis is not restricted to bilinear games. It applies to nonlinear convex-concave min-max problems over measures, including adversarial training of mean-field neural networks.

\subsection{Notation and setup}
For any $\mathcal{X} \subseteq \mathbb R^d,$ let $\mathcal{P}(\mathcal{X})$ denote the set of probability measures on $\mathcal{X}.$ In game theory, if $\mathcal{X}$ is the set of \textit{(pure) strategies} available to the players, then $\mathcal{P}(\mathcal{X})$ is known as the set of \textit{mixed strategies}. Let $\mathcal{C}, \mathcal{D} \subset \mathcal{P}(\mathcal{X})$ be nonempty and convex. We consider a convex-concave (cf. Assumption \ref{def: def-F-conv-conc}) payoff function $F:\mathcal{C} \times \mathcal{D} \to \mathbb R$ and the associated min-max game
\begin{equation}
\label{eq:game}
    \min_{\nu \in \mathcal{C}} \max_{\mu \in \mathcal{D}} F(\nu, \mu).
\end{equation}
We are interested in finding \textit{mixed Nash equilibria} (MNEs) for game \eqref{eq:game}, i.e., pairs of strategies $(\nu^*, \mu^*) \in \mathcal{C} \times \mathcal{D}$ such that, for any $(\nu, \mu) \in \mathcal{C} \times \mathcal{D},$ we have
    \begin{equation}
    \label{eq:saddle}
        F(\nu^*, \mu) \leq F(\nu^*, \mu^*) \leq F(\nu, \mu^*).
    \end{equation}
Throughout, we assume that there exists at least one MNE for game \eqref{eq:game}.\footnote{\label{fnote} If $F$ is continuous and $\mathcal{D}$ is compact, then the existence of an MNE of \eqref{eq:game} follows from Sion's minimax theorem \cite{sion}. For the particular case when $F(\nu, \mu) = \int_{\mathcal{X}} \int_{\mathcal{X}} f(x,y) \nu(\mathrm{d}x)\mu(\mathrm{d}y),$ an MNE exists due to Glicksberg's minimax theorem \cite{glicksberg} if $f$ is continuous and $\mathcal{C}, \mathcal{D}$ are compact.} In min-max games, the distance between a pair of strategies $(\nu, \mu)$ and an MNE is typically measured using the Nikaid\^{o}-Isoda (NI) error \cite{Nikaid1955NoteON}, which, for all $(\nu, \mu) \in \mathcal{C}\times \mathcal{D},$ is defined by
\begin{equation*}
    \text{NI}(\nu,\mu) \coloneqq \max_{\mu' \in \mathcal{D}} F(\nu, \mu') - \min_{\nu' \in \mathcal{C}} F(\nu', \mu).
\end{equation*}
Straight from the definition, we see that $\text{NI}(\nu,\mu) \geq 0$ for all $(\nu, \mu) \in \mathcal{C} \times \mathcal{D},$ and from \eqref{eq:saddle} it follows that $\text{NI}(\nu,\mu) = 0$ if and only if $(\nu, \mu)$ is an MNE. Our notions of MNE and NI error agree with the classical definitions in the bilinear setting, where $F(\nu,\mu) = \int_{\mathcal{X}} \int_{\mathcal{X}} f(x,y) \nu(\mathrm{d}x)\mu(\mathrm{d}y),$ for some $f:\mathcal{X} \times \mathcal{X} \to \mathbb R.$ We therefore adopt the same terminology for the more general class of convex-concave games over measures. 

We now introduce the notion of Bregman divergence on the space of probability measures using the flat derivative (Definition \ref{def:fderivative}), following \cite{korba}, who defined it via directional derivatives. Let $\mathcal{E}\subset \mathcal{P}(\mathcal{X})$ be nonempty and convex.
\begin{definition}[Bregman divergence]
\label{def:bregman-div}
     The $h$-Bregman divergence is the map $D_h: \mathcal{E} \times \mathcal{E} \to [0, \infty)$ given by
    \begin{equation*}
        D_h(\nu', \nu) \coloneqq h(\nu') - h(\nu) - \int_{\mathcal{X}} \frac{\delta h}{\delta \nu}(\nu,x)(\nu'-\nu)(\mathrm{d}x).
    \end{equation*}
\end{definition}
One of the key advantages of MD is that its use of a Bregman divergence as the update penalty allows it to adapt naturally to the geometry of the underlying space.

\subsection{Simultaneous and alternating MDA}
\label{subsec:sim and alt MDA}
With the definition of the Bregman divergence in place, we now present the simultaneous and alternating MDA algorithms. For a given stepsize $\tau > 0,$ and fixed initial pair of strategies $(\nu_0, \mu_0) \in \mathcal{C} \times \mathcal{D},$ for $n \geq 0,$ the \textit{simultaneous} and \textit{alternating} MDA algorithms are respectively defined in Algorithm \ref{eq:mirror-sim-explicit} and Algorithm \ref{eq:mirror-alt}.
\begin{algorithm}[!ht] 
   \caption{\textsc{Simultaneous MDA}}
   \label{eq:mirror-sim-explicit}
   \KwIn{Objective function $F,$ initial measures $(\nu^0, \mu^0)$, stepsize $\tau > 0$}
   \For{$n =0, 1, \dots ,N-1$}
   {
    $\nu^{n+1} = \argmin\limits_{\nu \in \mathcal{C}} \{\int_{\mathcal{X}} \frac{\delta F}{\delta \nu}(\nu^n, \mu^{n}, x)(\nu-\nu^n)(\mathrm{d}x) + \frac{1}{\tau} D_h(\nu, \nu^n)\},$
    
    $\mu^{n+1} = \argmax\limits_{\mu \in \mathcal{D}} \{\int_{\mathcal{X}} \frac{\delta F}{\delta \mu}(\nu^{n}, \mu^{n}, y)(\mu-\mu^n)(\mathrm{d}y) - \frac{1}{\tau} D_h(\mu, \mu^n)\}$
   }
\KwOut{$\left(\frac{1}{N}\sum_{n=0}^{N-1}\nu^n,\frac{1}{N}\sum_{n=0}^{N-1}\mu^n\right)$}
\end{algorithm}
\begin{algorithm}[!ht] 
   \caption{\textsc{Alternating MDA}}
   \label{eq:mirror-alt}
   \KwIn{Objective function $F,$ initial measures $(\nu^0, \mu^0)$, stepsize $\tau > 0$}
   \For{$n =0, 1, \dots ,N-1$}
   {
    $\nu^{n+1} = \argmin\limits_{\nu \in \mathcal{C}} \{\int_{\mathcal{X}} \frac{\delta F}{\delta \nu}(\nu^n, \mu^{n}, x)(\nu-\nu^n)(\mathrm{d}x) + \frac{1}{\tau} D_h(\nu, \nu^n)\},$
    
    $\mu^{n+1} = \argmax\limits_{\mu \in \mathcal{D}} \{\int_{\mathcal{X}} \frac{\delta F}{\delta \mu}(\nu^{n+1}, \mu^{n}, y)(\mu-\mu^n)(\mathrm{d}y) - \frac{1}{\tau} D_h(\mu, \mu^n)\}$
   }
\KwOut{$\left(\frac{1}{N}\sum_{n=0}^{N-1}\nu^{n+1},\frac{1}{N}\sum_{n=0}^{N-1}\mu^n\right)$}
\end{algorithm}
The distinction between the two algorithms is important for the convergence rate. In Algorithm \ref{eq:mirror-sim-explicit}, both players evaluate their updates at the same old pair $(\nu^n,\mu^n)$, so the analysis leaves a first-order interaction error that leads to the $\mathcal{O}(N^{-1/2})$ rate. In Algorithm \ref{eq:mirror-alt}, the second player observes the first player’s updated move $\nu^{n+1}$. This extra information creates an asymmetric term, but once that term is controlled, the remaining error is of higher order. The proof of Theorem \ref{thm:conv-alt-bregman} shows that this asymmetric term can be rewritten as a Bregman commutator, transported to the dual function space, and bounded cubically by $\tau^3$ using Assumption \ref{assumption:lipschitz-h^*2}. This is the mechanism behind the improved $\mathcal{O}(N^{-2/3})$ rate.

\begin{remark}[Entropy MDA and the Fisher--Rao gradient flow]
    When $h$ is the relative entropy, we can view Algorithms \ref{eq:mirror-sim-explicit} and \ref{eq:mirror-alt} as Euler discretizations of a Fisher--Rao gradient flow, whose continuous-time convergence with explicit rates for mean-field min-max games was proved in \cite{lascu2024fisherrao} (cf. also \cite{https://doi.org/10.48550/arxiv.2206.02774} for single-agent convex optimization). 
\end{remark}
\begin{remark}[Connection to the continuous-time Bregman gradient flow]
    In Appendix \ref{sec:proof-implicit-game}, we provide an implicit MDA algorithm that achieves a convergence rate of $\mathcal{O}(N^{-1})$, which matches the rate $\mathcal{O}(t^{-1})$ of the continuous-time gradient flow obtained by letting $\tau \to 0.$ However, this scheme is not implementable in practice, as it requires each player to know the next move of their opponent at every iteration. Consequently, while it provides a useful theoretical benchmark, practical algorithms rely on explicit (simultaneous or alternating) mirror schemes.
\end{remark}

\subsection{Our contribution}
We provide a theoretical contribution to the convergence analysis of MDA for mean-field min-max problems. We establish non-asymptotic rates for simultaneous and alternating MDA on spaces of probability measures under convexity-concavity and relative smoothness assumptions. Our contribution can be summarized as follows:
\begin{itemize}[leftmargin = 5mm]
    \item From the perspective of measure space MD, our work extends the single-player results of \cite{korba} to two-player min-max games. This extension is nontrivial because, unlike in convex minimization, the objective is not monotonically decreasing along the iterates. Consequently, the analysis must be carried out in terms of the Nikaid\^o-Isoda error and requires new proof techniques based on convex conjugates and Bregman divergences on dual function spaces.
    \item From the perspective of mean-field min-max optimization, our work generalizes \cite{pmlr-v97-hsieh19b} in several directions. We go beyond bilinear GAN objectives by covering general convex-concave payoffs, including RLHF and nonlinear adversarial training of mean-field NNs. Our analysis also applies to mirror geometries beyond relative entropy. Finally, whereas \cite{pmlr-v97-hsieh19b} establishes an explicit $\mathcal{O}(N^{-1/2})$ rate only for simultaneous MDA, we prove that alternating MDA achieves the faster rate $\mathcal{O}(N^{-2/3})$.
    \item At the technical level, the main novelty is a dual space commutator argument for alternating MDA on spaces of measures. Alternating updates generate asymmetric Bregman terms. In finite dimensions, such terms can be controlled using the inner product identification between iterates and the mirror map since both live in the same finite-dimensional space, but this identification is unavailable in the measure setting. Iterates are measures, while the mirror map naturally lives in a function space. We therefore pass to the primal-dual pair $(\mathcal{M}(\mathcal{X}),C_b(\mathcal{X}))$, define the convex conjugate $h^*$ on $C_b(\mathcal{X})$, and prove a transport identity showing that primal Bregman commutators on measures coincide with dual Bregman commutators on functions. This allows us to apply Fréchet smoothness of the mirror map to control the update error and obtain the improved $\mathcal{O}(N^{-2/3})$ rate. To our knowledge, the use of this dual formulation of MDA on a function space is novel and may be useful beyond the present setting.
\end{itemize}

\subsection{Motivating examples}
\textbf{Training of GANs.}
\label{example:GAN-example}
Let $\hat{\xi} \in \mathcal{P}(\mathcal{Y})$ be the empirical measure of the i.i.d. sampled particles $\{x_i\}_{i=1}^M \subset \mathcal{Y},$ and let $\xi \in \mathcal{P}(\mathcal{Z})$ be a source measure. Consider the measurable parametrized transport map $T_{\theta}: \mathcal{Z} \to \mathcal{Y}$ (which typically can be viewed as a neural network with parameters $\theta \in \Theta \subset \mathbb R^d$). The \textit{pushforward} of the measure $\xi$ on $\mathcal{Z}$ via $T_{\theta}$ is the measure $T_{\theta}{\#} \xi$ on $\mathcal{Y}$ characterized by $\int_{\mathcal{Y}} \varphi \mathrm{d}\left(T_{\theta}{\#} \xi\right) = \int_{\mathcal{Z}} \left(\varphi \circ T_{\theta}\right)\mathrm{d}\xi,$
for any measurable function $\varphi: \mathcal{Y} \to \mathbb R.$ The aim of a GAN is to search for the optimal set of parameters $\theta^* \in \Theta$ that minimizes the distance between the generated measure $T_{\theta^*} \# \xi$ and the empirical measure $\hat{\xi}.$ In order to evaluate this distance, we define the function $D_{w}: \mathcal{Y} \to \mathbb R$ (which can also be viewed as a neural network with parameters $w \in \mathcal{W} \subset \mathbb R^d$), and solve the min-max problem 
\begin{equation*}
    \min_{\theta \in \Theta} \max_{w \in \mathcal{W}} \int_{\mathcal{Y}} D_{w}(y) (T_{\theta} \# \xi - \hat{\xi})(\mathrm{d}y).
\end{equation*} 
For example, if the family of functions $\left\{D_{w}\right\}_{w \in \mathcal{W}}$ is either 1-Lipschitz continuous or uniformly bounded, the resulting GAN corresponds to the Wasserstein GAN or the Total Variation GAN, respectively \cite{pmlr-v70-arjovsky17a}. On the other hand, if the family of functions $\left\{D_{w}\right\}_{w \in \mathcal{W}}$ belongs to the norm unit ball of a reproducing kernel Hilbert space (RKHS), we recover the Maximum Mean Discrepancy (MMD) GAN \cite{NIPS2017_dfd7468a}. Solving this problem on the finite-dimensional subspaces $\theta, w \subset \mathbb R^d$ may pose serious challenges such as the lack of existence of pure Nash equilibria. Instead, we lift the problem to the space of probability measures and search for MNEs, i.e., optimal distributions over the set of parameters. That is, by setting $f(\theta, w) \coloneqq \int_\mathcal{Y} D_{w}(y) \left(T_{\theta} \# \xi - \hat{\xi}\right)(\mathrm{d}y),$ we solve the mean-field min-max game
    \begin{equation}
    \label{eq:bilinear-GAN-lifted}
        \min_{\nu \in \mathcal{P}\left(\Theta\right)} \max_{\mu \in \mathcal{P}\left(\mathcal{W}\right)}\int_{\mathcal{W}} \int_{\Theta} f(\theta, w) \nu(\mathrm{d}\theta)\mu(\mathrm{d}w).
    \end{equation}
Note that the lifted problem is bilinear in $\nu$ and $\mu,$ so an MNE for \eqref{eq:bilinear-GAN-lifted} exists under mild conditions (see footnote \ref{fnote}). We illustrate that alternating updates speed up GANs training in simple numerical experiments (see Appendix \ref{section:numerical-example}).
\newline
\paragraph{Nash Learning from Human Feedback.}
Reinforcement Learning from Human Feedback (RLHF) has become essential to LLMs training since it provides a principled way to incorporate human preferences into the learning objective and thereby improve alignment beyond pretraining. This example illustrates that our framework applies to the RLHF (see, e.g., \cite{munos24a,calandriello2024humanalignmentlargelanguage} and the references therein for more context). In particular, we show how it can be cast as a convex-concave two-player game. Let $\mathcal{S}$ denote a measurable space of prompts/contexts and $\rho\in\mathcal P(\mathcal{S})$ their distribution. Let $\mathcal{A}$ denote a measurable space of responses. A preference model is a measurable map $P(\cdot \succ \cdot|x): \mathcal{A}\times \mathcal{A} \to [0,1]$ given by $(y,y')\mapsto P(y\succ y'|x)$, and interpreted as the probability that a random human prefers response $y$ over $y'$ given context $x$. Assume that the preference model is anti-symmetric, i.e., $ P(y\succ y'|x) = 1- P(y'\succ y|x)$. Given two policies $\mu,\nu:\mathcal{S}\to\mathcal P(\mathcal{A}|\mathcal{S})$, define the preference between them by
\begin{equation*}
P(\mu\succ \nu):= \int_\mathcal{S} \int_\mathcal{A} \int_\mathcal{A}P(y\succ y'| x)\nu(\mathrm{d}y')\mu(\mathrm{d}y)\rho(\mathrm{d}x).
\label{eq:nlhf-P}
\end{equation*}
Due to anti-symmetry, we have $P(\mu\succ \nu) = 1- P(\nu\succ \mu)$ for any policies $\mu,\nu \in \mathcal P(\mathcal{A}|\mathcal{S})$. Define the convex feasible set of policies $\mathcal C = \mathcal{D}:=\left\{\pi:\ \pi(\cdot|x)\in \mathcal{P}(\mathcal{A}),\ \forall x \in \mathcal{S}\right\}$. The learning objective is to find a policy $\mu$ that is preferred over every alternative policy $\nu$ and thus can be formulated as\footnote{Because $P(\mu\succ \nu)$ is bilinear in $(\mu,\nu)$, it admits a Nash equilibrium $(\nu^*,\mu^*)$ which satisfies $\max_{\mu \in \mathcal{C}} \min_{\nu \in \mathcal{C}} P(\mu\succ \nu) = \min_{\nu \in \mathcal{C}} \max_{\mu \in \mathcal{C}} P(\mu\succ \nu) = P(\mu^*\succ \nu^*)$ \cite[Theorem 1]{rosen}.}
\begin{equation*}
    \max_{\mu \in \mathcal{C}} \min_{\nu \in \mathcal{C}} P(\mu\succ \nu).
\end{equation*}
\newline
\paragraph{Adversarial training of mean-field NNs.}
Let $\mathcal{Y} \subset \mathbb R$ and $\mathcal{Z} \subset \mathbb R^{d-1}$ be compact and convex with $\hat{\mu} \in \mathcal{P}(\mathcal{Y} \times \mathcal{Z})$ representing the training data $(y,z) \in \mathcal{Y} \times \mathcal{Z}.$ Let $x=(w,b) \in \mathbb R^{d-1} \times \mathbb R$ be the parameters of the neural network, let $\varphi:\mathbb R \to \mathbb R$ be a bounded, continuous, non-constant activation function and define the function $\hat{\varphi}(x,z) \coloneqq \varphi(w \cdot z + b)$. The training of the two-layer mean-field neural network aims to find the optimal distribution of parameters $\nu \in \mathcal{P}(\mathbb R^d)$ which minimizes the lifted $L^2$-loss function (see, e.g.,\ \cite[Section 3]{10.1214/20-AIHP1140} and the references therein)
\begin{equation*}
\min_{\nu \in \mathcal{P}(\mathbb R^d)} F_0(\nu):=\int_{\mathcal{Y} \times \mathcal{Z}} \frac{1}{2}\left|y-\mathbb E^{X \sim \nu}[\hat{\varphi}(X, z)]\right|^2 \hat{\mu}(\mathrm{d}y, \mathrm{d}z).
\end{equation*}
To account for potential attacks by an adversary aiming to manipulate the training data $\hat{\mu},$ we minimize over the parameter distribution $\nu,$ considering the ``worst-case'' perturbation of $\hat{\mu}.$ This leads to the two-player game
\begin{equation}\label{eq:adversarial_minmax}
    \min_{\nu \in \mathcal{P}(\mathbb R^d)} \max_{\mu \in \mathcal{P}(\mathcal{Y} \times \mathcal{Z})} \left( F_0(\nu,\mu) - \operatorname{TV}_\varepsilon^2(\mu,\hat{\mu}) \right),
\end{equation}
where, for any $\varepsilon > 0$, $\operatorname{TV}_\varepsilon$ denotes the smoothed total variation distance, which represents the cost incurred by the adversary to alter the original training data $\hat{\mu}$, and it is defined by 
\begin{equation*}
\operatorname{TV}_\varepsilon(\mu,\hat\mu):=\frac12\int_{\mathcal{Y} \times \mathcal{Z}} \sqrt{(m(r)-\hat m(r))^2+\varepsilon^2}\ \mathrm{d}r,
\end{equation*}
for $\mu$ and $\hat\mu$ absolutely continuous with respect to Lebesgue with density $m$ and $\hat m$, respectively. The resulting objective $F_\varepsilon(\nu, \mu) \coloneqq F_0(\nu, \mu) - \operatorname{TV}_\varepsilon^2(\mu,\hat{\mu})$ is a non-linear function covered by our general framework (see Proposition \ref{prop:verification-adversarial}). The choice of the incurred cost in \eqref{eq:adversarial_minmax} is, to an extent, arbitrary, and we focus here on $\operatorname{TV}_\varepsilon^2$ due to its convenience for verifying our assumptions. From a technical point of view, we consider the smoothed TV distance because, unlike the classical TV distance, it is flat differentiable everywhere. Alternative cost functions include the Wasserstein distance \cite{bai2023wasserstein,trillos2023adversarial} and the KL divergence \cite{si2023distributionally}.

\subsection{Related works}
\label{sec:related-works}
Mirror descent (MD) was originally proposed in \cite{nemirovski1983problem} for solving convex optimization problems and has been extensively studied on finite-dimensional vector spaces (see, e.g., \cite{BECK2003167,bubeck2015convex,LuNesterov}). In the finite-dimensional min-max setting, \cite{wibisono2022alternating} analyzed alternating MDA and showed that alternating updates can improve the convergence rate for constrained min-max games. In finite dimensions, the proof relies on Euclidean vector space geometry, which identifies the primal and dual spaces through the Euclidean inner product. In particular, the Bregman potential and the mirror map belong to the same finite-dimensional vector space. In the space of probability measures, there is no canonical inner product structure. We therefore first construct the appropriate primal space geometry through flat derivatives and Bregman divergences on measures. We then pass to the duality pair between signed measures and bounded continuous functions in order to define the convex conjugate and the corresponding dual mirror map. This dual space construction is essential for the analysis of the alternating scheme.

The MD algorithm has also been extended to infinite-dimensional settings for studying optimization problems over spaces of measures, with applications in machine learning, including Sinkhorn's and Expectation-Maximization algorithms \cite{korba}, as well as policy optimization for reinforcement learning \cite{tomar2021mirror, KerimkulovLeahySiskaSzpruchZhang2025}. By leveraging results from finite-dimensional optimization \cite{Bauschke2017ADL,LuNesterov}, \cite{korba} extends the convergence proof of \cite{LuNesterov} to infinite-dimensional MD. In particular, they show that convexity and relative smoothness of the objective are sufficient to obtain an $\mathcal{O}(N^{-1})$ convergence rate for single-agent measure optimization. In contrast, our setting is a two-player min-max problem, where monotonic decrease of the objective along the iterates is no longer available and where the alternating scheme creates additional asymmetric terms that must be controlled through the dual space construction. 

Other works such as \cite{pmlr-v97-hsieh19b, dvurechensky2024analysis} studied infinite-dimensional Mirror-Prox algorithms for finding MNEs of two-player zero-sum games. Mirror-Prox methods can achieve $\mathcal{O}(N^{-1})$ rates under Lipschitz-type conditions on the gradients of the payoff function (see, e.g., \cite[Section 5.2.3]{bubeck2015convex}). Existing infinite-dimensional analyses (e.g., entropic Mirror-Prox for convex-concave payoffs \cite{dvurechensky2024analysis}) rely on a $\operatorname{TV}$-Lipschitz assumption on the Fréchet derivatives of the payoff (see \cite[Assumption 4.2]{dvurechensky2024analysis}), which is substantially stronger than our relative smoothness assumption (see Assumption \ref{def:relative-smoothness} and Appendix \ref{app:further-works}). In particular, for bilinear games $F(\nu,\mu)=\iint f(x,y)\nu(\mathrm{d}x)\mu(\mathrm{d}y)$, relative smoothness automatically holds with $L_\nu=L_\mu=0$ for any $f$, whereas the TV-Lipschitz requirement on the derivatives forces $f$ to be bounded and Lipschitz, excluding many relevant settings such as RLHF and adversarial NNs due to lack of Lipschitz continuity. Moreover, Mirror-Prox requires two mirror steps per player per iteration, thus doubling gradient evaluations, and in the stochastic setting (such as in particle implementations) its rate matches $\mathcal{O}(N^{-1/2})$. Finally, a convergence theory for an alternating Mirror-Prox scheme (the natural counterpart to our alternating MDA) appears technically challenging and, to our knowledge, is still an open question. We discuss the technical difficulties involved in analyzing Mirror-Prox in greater detail in Appendix \ref{app:further-works}. The appendix also contains a more extensive discussion of related work on optimization over spaces of measures. 

\section{Convergence of the simultaneous MDA Algorithm \texorpdfstring{\ref{eq:mirror-sim-explicit}}{1.6}}
\label{section:2}
In what follows, we state our standing assumptions and the necessary definitions for introducing the convergence results.
\begin{assumption}[Differentiability and convexity of $h$]
\label{assumption:assump-h}
    Assume that $h \in \mathfrak{C}^1(\mathcal{E})$ (cf. Definition \ref{def:fderivative}). Moreover, assume that $h:\mathcal{E} \to \mathbb R$ is $\alpha$-strongly convex relative to the total variation (TV) distance, i.e., there exists $\alpha > 0$ such that for all $\varepsilon \in [0,1]$ and $\nu',\nu \in \mathcal{E},$ with $\nu_\varepsilon := (1-\varepsilon)\nu + \varepsilon \nu'$,
    \begin{align*}
        h\left(\nu_\varepsilon\right) \leq (1-\varepsilon)h(\nu) + \varepsilon h(\nu') - \frac{\alpha}{2}\varepsilon(1-\varepsilon)\operatorname{TV}^2(\nu',\nu).
    \end{align*}
\end{assumption}
If Assumption \ref{assumption:assump-h} holds, then we show in Lemma \ref{lemma:strong-convexity-h} that for any $\nu', \nu \in \mathcal{E},$ we have
\begin{equation}\label{eg:h_strong_convexity_TV}
    h(\nu') - h(\nu) \geq \int_{\mathcal{X}} \frac{\delta h}{\delta \nu}(\nu,x)(\nu'-\nu)(\mathrm{d}x) + \frac{\alpha}{2}\operatorname{TV}^2(\nu',\nu).
\end{equation}
Assumption \ref{assumption:assump-h} implies \eqref{eg:h_strong_convexity_TV} due to Lemma \ref{lemma:strong-convexity-h} and hence, straight from Definition \ref{def:bregman-div}, we obtain $D_h(\nu', \nu) \geq \frac{\alpha}{2}\operatorname{TV}^2(\nu',\nu),$ for all $\nu', \nu \in \mathcal{E},$ and $D_h(\nu', \nu) = 0$ if and only if $\nu'=\nu.$ In Appendix \ref{subsection:examples satifying 1.1}, we provide examples of the corresponding sets $\mathcal{E}$ for the relative entropy and $\chi^2$-divergence cases that satisfy Assumption \ref{assumption:assump-h}. For other examples of functions $h$ that verify the inequality $ D_h(\nu',\nu) \geq \frac{\alpha}{2}\operatorname{TV}^2(\nu',\nu)$ and hence Assumption \ref{assumption:assump-h}, see \cite[Lemma $3.2$]{chizat2023convergence}.

Let $F: \mathcal{C} \times \mathcal{D} \to \mathbb R$ be such that $F(\cdot, \mu) \in \mathfrak{C}^1(\mathcal{C})$ and $F(\nu, \cdot) \in \mathfrak{C}^1(\mathcal{D})$ (cf. Definition \ref{def:fderivative}). Under Definition \ref{def:bregman-div}, the Bregman divergences of $F(\cdot, \mu)$ and $F(\nu, \cdot)$ are respectively given by
\begin{align*}
    D_{F(\cdot,\mu)}(\nu',\nu) = F(\nu', \mu) - F(\nu, \mu) - \int_{\mathcal{X}} \frac{\delta F}{\delta \nu}(\nu,\mu,x) (\nu'-\nu)(\mathrm{d}x),\\
    D_{F(\nu,\cdot)}(\mu',\mu) = F(\nu, \mu') - F(\nu, \mu) - \int_{\mathcal{X}} \frac{\delta F}{\delta \mu}(\nu,\mu,y) (\mu'-\mu)(\mathrm{d}y).
\end{align*}
\begin{assumption}[Convexity-concavity of $F$]
\label{def: def-F-conv-conc}
Assume that $F$ is convex in $\nu$ and concave in $\mu,$ i.e., for any $\nu, \nu' \in \mathcal{C}$ and $\mu, \mu' \in \mathcal{D},$ we have $D_{F(\cdot,\mu)}(\nu',\nu) \geq 0$,  $D_{F(\nu,\cdot)}(\mu',\mu) \leq 0.$
\end{assumption}
\begin{assumption}[Uniform boundedness of  the flat derivatives of $F$]
	\label{assumption:bddflatF}
	Suppose that there exists $C_1 > 0$ and $C_2 > 0$ such that for all $(\nu,\mu)\in \mathcal{C} \times \mathcal{D},$ and all $x,y \in \mathcal{X},$ we have $\left|\frac{\delta F}{\delta \nu}(\nu, \mu, x)\right| \leq C_1$, $\left|\frac{\delta F}{\delta \mu}(\nu, \mu, y)\right| \leq C_2.$
\end{assumption}
Assumptions \ref{def: def-F-conv-conc} and \ref{assumption:bddflatF} are standard in the mean-field optimization literature, see, e.g., \cite{zhenjiefict,lascu2023entropic}. In Proposition \ref{prop:verification-GANs} and \ref{prop:verification-adversarial}, we verify that Assumptions \ref{def: def-F-conv-conc} and \ref{assumption:bddflatF} are satisfied by the examples in Section \ref{example:GAN-example}. In Lemma \ref{lemma:bound-hessian}, we show that Assumption \ref{def: def-F-conv-conc} corresponds to the intuition we have from optimization on $\mathbb R^d,$ where convexity (concavity) is equivalent to the Hessian of $F$ being non-negative (non-positive). 

We are ready to state the main result on the convergence of the simultaneous MDA algorithm. 
\begin{theorem}[Convergence of the simultaneous MDA Algorithm \ref{eq:mirror-sim-explicit}]
\label{thm: conv-sim-bregman}
    Let $(\nu^0, \mu^0)$ be such that $\Delta \coloneqq\sup_{\nu \in \mathcal{C}}D_h(\nu, \nu^0) + \sup_{\mu \in \mathcal{D}}D_h(\mu, \mu^0) < \infty.$ Suppose that Assumption \ref{assumption:assump-h}, \ref{def: def-F-conv-conc}, \ref{assumption:bddflatF} hold. If $\tau = \Theta\left(N^{-1/2}\right)\footnote{We say $f(n)=\Theta(g(n))$ if there exists $c_1,c_2, n_0 > 0$ such that $c_1 g(n) \leq f(n) \leq c_2 g(n),$ for all $n \geq n_0$.},$ then there exists a constant $K_\text{sim} \equiv K_\text{sim}(\Delta, C_1, C_2, \alpha) > 0$ such that
    \begin{equation*}
        \operatorname{NI}\left(\frac{1}{N} \sum_{n=0}^{N-1}\nu^{n}, \frac{1}{N}\sum_{n=0}^{N-1}\mu^{n}\right) \leq \frac{K_\text{sim}}{\sqrt{N}}.
    \end{equation*}
\end{theorem}
\begin{remark}[Initialization condition]\label{remark:initialization}
    The initialization requirement in Theorem \ref{thm: conv-sim-bregman}, namely, $\sup_{\nu \in \mathcal{C}} D_h(\nu, \nu^0) + \sup_{\mu \in \mathcal{D}} D_h(\mu, \mu^0) < \infty$ must be verified case by case, depending on the choice of $h$ and the admissible classes $\mathcal{C},\mathcal{D}$. Such verifications for Examples \ref{example:relative-entropy} and \ref{example:chi-squared} are carried out in Lemmas \ref{lemma:verif-init-entropy} and \ref{lemma:verif-init-chi}, respectively.
\end{remark}
\begin{remark}[About the proof of Theorem \ref{thm: conv-sim-bregman}]
\label{remark: proof of thm 2.1}
    In their proof of convergence of the infinite-dimensional MD algorithm for convex $F,$ \cite{korba} show that relative smoothness is sufficient to prove that $F$ is monotonically decreasing along the sequence $(\nu^n)_{n \geq 0}$ generated by MD, i.e., $F(\nu^{n+1}) \leq F(\nu^n),$ for all $n \geq 0.$ The monotonicity property is key to establishing that the MD scheme converges to a minimizer of $F$ with rate $\mathcal{O}(N^{-1}).$ In the case of Algorithm \ref{eq:mirror-sim-explicit}, monotonicity no longer holds, and we therefore prove convergence only for the time-averaged iterates. The corresponding convergence rate is expected to be the slower rate $\mathcal{O}\left(N^{-1/2}\right)$, as suggested by the finite-dimensional \cite[Theorem $5.1$]{bubeck2015convex}. 
    
    Furthermore, in contrast to the proof strategy in \cite{korba}, our argument does not require relative smoothness of $F$ (a condition involving bounds on its second-order flat derivatives, see Assumption \ref{def:relative-smoothness} and Lemma \ref{lemma:bound-hessian}), but only uniform boundedness of its first-order flat derivatives. This is made possible by requiring $h$ to be strongly convex relative to the $\operatorname{TV}^2$ distance, which is stronger than the strict convexity of $h$ used in \cite{korba}, yet still verifiable for typical choices of divergences, see Examples \ref{example:relative-entropy} and \ref{example:chi-squared}. 
    
    The choice of the $\operatorname{TV}^2$ distance in Assumption \ref{assumption:assump-h} is motivated by the fact that several functions $h,$ as illustrated in Examples \ref{example:relative-entropy} and \ref{example:chi-squared} and noted in \cite[Lemma 3.2]{chizat2023convergence}, satisfy the inequality $D_h(\nu', \nu) \geq \frac{\alpha}{2}\operatorname{TV}^2(\nu',\nu)$. By contrast, replacing $\operatorname{TV}^2$ with, for example, the squared $L^2$-Wasserstein distance $\mathcal{W}_2^2(\nu',\nu)$ would reduce the generality of our analysis. Apart from the relative entropy, we are not aware of any divergences satisfying $D_h(\nu', \nu) \geq \frac{\alpha}{2}\mathcal{W}_2^2(\nu',\nu)$, and even this would require a considerably stronger (and difficult to verify) condition that the iterates produced by Algorithm \ref{eq:mirror-sim-explicit} satisfy the Talagrand inequality. 
    
    Thus, Theorem \ref{thm: conv-sim-bregman} should be viewed as the baseline explicit mean-field MDA guarantee. It requires only first-order boundedness of the flat derivatives but does not exploit the additional information structure of alternating play. The improved rate in Theorem \ref{thm:conv-alt-bregman} comes from analyzing precisely this additional structure.
\end{remark}
\subsection{Sketch of proof of Theorem \ref{thm: conv-sim-bregman}}
    Here, we present the proof sketch of Theorem \ref{thm: conv-sim-bregman}. The full proof is provided in Section \ref{section:proofs}. Applying the Bregman proximal inequality (Lemma \ref{lemma:Bregman-prox-ineq}) to each of the simultaneous mirror steps yields two inequalities: one for the $\nu$-update and one for the $\mu$-update. Each inequality relates the corresponding linearized terms in \eqref{eq:mirror-sim-explicit} to three Bregman terms: $D_h(\nu,\nu^n)$, $D_h(\nu,\nu^{n+1})$ and $D_h(\nu^{n+1},\nu^n),$ and likewise for the $\mu$-update. Combining the linearized terms in \eqref{eq:mirror-sim-explicit} involving $\nu- \nu^n$ and $\mu-\mu^n$ with the convexity-concavity of $F$ via Assumption \ref{def: def-F-conv-conc}, we obtain upper bounds on the differences $F(\nu^n, \mu^n) - F(\nu, \mu^n)$ and $F(\nu^n, \mu) - F(\nu, \mu^n)$, respectively. Assumptions \ref{assumption:assump-h} and \ref{assumption:bddflatF} then convert the remaining linearized terms involving $\nu^{n+1}-\nu^n$, $\mu^{n+1}-\mu^n$ and the Bregman divergences $D_h(\nu^{n+1}, \nu^n)$, $D_h(\mu^{n+1}, \mu^n)$ into quadratic bounds in $\operatorname{TV}$. More precisely, for the $\nu$-part one obtains the expression
    \begin{align*}
        F(\nu^n, \mu^n) - F(\nu, \mu^n) \leq - \frac{\alpha}{2\tau}\left(\operatorname{TV}(\nu^{n+1},\nu^n) - \frac{2\tau C_1}{\alpha}\right)^2 +\frac{2\tau C_1^2}{\alpha} + \frac{1}{\tau} S_n^\nu,
    \end{align*}
    $S_n^\nu := D_h(\nu, \nu^n) - D_h(\nu, \nu^{n+1}),$ and similarly for the $\mu$-part. Combining the two inequalities gives the unified bound
    \begin{align*}
    &F(\nu^n, \mu) - F(\nu, \mu^n) \leq \frac{2\tau}{\alpha}\left(C_1^2 +  C_2^2\right)+ \frac{1}{\tau}(S_n^\nu + S_n^\mu).
    \end{align*}
    Summing over $n=0,...,N-1,$ telescoping $S_n^\nu + S_n^\mu$, dividing by $N$ and using Jensen’s inequality, then yields
    \begin{equation*}
         \operatorname{NI}\left(\frac{1}{N}\sum_{n=0}^{N-1}\nu^{n}, \frac{1}{N}\sum_{n=0}^{N-1}\mu^{n}\right) \leq \frac{2\tau}{\alpha}\left(C_1^2 +  C_2^2\right)  +\frac{1}{N\tau}\Delta.
    \end{equation*}
    Setting $\tau = \Theta\left(N^{-1/2}\right)$ leads to the final bound, establishing the claim. 
\section{Convergence of the alternating MDA Algorithm \texorpdfstring{\ref{eq:mirror-alt}}{1.7}}
\label{subsect:alternating}
The analysis of alternating MDA requires a dual-space viewpoint. In the simultaneous scheme, the error terms can be controlled directly using strong convexity of $h$. Alternating updates, however, produce asymmetric Bregman commutators. In finite-dimensional Euclidean spaces such terms can be handled using self duality of the inner product and derivatives of the dual mirror map on the same space. In the measure setting, the primal variables are signed measures, while the natural dual objects are bounded continuous functions. Therefore, before analyzing the alternating scheme, we introduce the duality pair between signed measures and bounded continuous functions.

Let $\left(\mathcal{M}(\mathcal{X}), \|\cdot\|_{\operatorname{TV}}\right)$ be the Banach space of finite signed measures $\mu$ on $\mathcal{X}$ equipped with the total variation norm $\|\mu\|_{\operatorname{TV}} \coloneqq |\mu|(\mathcal{X}).$ Let $\left(C_b(\mathcal{X}), \|\cdot\|_{\infty}\right)$ be the Banach space of bounded continuous functions from $\mathcal{X} \subset \mathbb R^d$ to $\left(\mathbb R, |\cdot|\right),$ where $|\cdot|$ is the Euclidean norm. For any $\left(f,m\right) \in C_b(\mathcal{X}) \times \mathcal{M}(\mathcal{X}),$ we define the duality pairing $\langle \cdot, \cdot \rangle: C_b(\mathcal{X}) \times \mathcal{M}(\mathcal{X}) \to \mathbb R$ by 
\begin{equation}
\label{eq:duality-pairing}
    \langle f, m \rangle \coloneqq \int_{\mathcal{X}} f(x)m(\mathrm{d}x). 
\end{equation}
Next, we define the notion of convex conjugate of $h:\mathcal{M}(\mathcal{X}) \to \mathbb R$ relative to the duality pairing \eqref{eq:duality-pairing}.
\begin{definition}[Convex conjugate]
\label{def: convex-conjugate}
    Let $h:\mathcal{M}(\mathcal{X}) \to \mathbb R$ be a function. Then the map $h^*: C_b(\mathcal{X}) \to \mathbb R$ given by $h^*(f) \coloneqq \sup_{m \in \mathcal{M}(\mathcal{X})} \left\{\langle f, m \rangle - h(m)\right\}$
is called the \textit{convex conjugate} of $h.$
\end{definition}
Regardless of the convexity of $h$, it follows from \cite[Theorem $2.112$]{Bonnans2000PerturbationAO} that $h^*$ is convex on $C_b(\mathcal{X}),$ i.e., for all $\lambda \in [0,1]$ and all $f',f \in C_b(\mathcal{X}),$ we have that $h^*\left((1-\lambda)f + \lambda f'\right) \leq (1-\lambda)h^*(f) + \lambda h^*(f').$ In Examples \ref{example:entropy1} and \ref{example:chi1}, we provide in Examples \ref{example:relative-entropy} and \ref{example:chi-squared} the explicit form of $h^*$ when $h$ is the relative entropy and the chi-squared divergence, respectively. 

Analogous to the characterization of the convexity of $h$ on $\mathcal{P}(\mathcal{X})$ via flat derivatives, we can characterize the convexity of $h^*$ on $C_b(\mathcal{X})$ using its Fréchet derivative (cf. Definition \ref{definition:frechet-diff}). We say $h^*:C_b(\mathcal{X}) \to \mathbb R$ is Fréchet-convex if for any $f,f' \in C_b(\mathcal{X}),$ we have
\begin{equation*}
    h^*(f') - h^*(f) \geq \nabla_\mathcal{F} h^*(f)[f'-f].
\end{equation*} 
As shown in Examples \ref{example:first-Frechet-entropy} and \ref{example:first-Frechet-chi}, when $h$ is chosen as the relative entropy or the chi-squared divergence, its convex conjugate $h^*$ admits the Fréchet derivatives $\nabla_\mathcal{F} h^*(f).$

Furthermore, using the Fréchet characterization of convexity, we can define the Bregman divergence between $f$ and $f'$ on the dual space $C_b(\mathcal{X})$.
\begin{definition}[Dual Bregman divergence]
\label{def:dual-Bregman}
    Let $h^*:C_b(\mathcal{X}) \to \mathbb R$ be the convex conjugate of $h.$ The \textit{dual $h^*$-Bregman divergence} is the map $D_{h^*}: C_b(\mathcal{X}) \times C_b(\mathcal{X}) \to [0,\infty)$ given by
    \begin{equation*}
        D_{h^*}(f',f) \coloneqq h^*(f') - h^*(f) - \nabla_\mathcal{F} h^*(f)[f'-f].
    \end{equation*}
\end{definition}
Before stating the next assumption on the dual space, we introduce two additional assumptions on $F$. These conditions are required only for the analysis of the alternating scheme, as they allow us to control the extra asymmetric terms arising from the updates in \eqref{eq:mirror-alt}. 
\begin{assumption}[Relative smoothness of $F$]
\label{def:relative-smoothness}
     Assume the function $F$ is $(L_{\nu},L_\mu)$-smooth relative to $h,$ i.e., there exist $L_{\nu}, L_{\mu} > 0$ such that, for any $\nu, \nu' \in \mathcal{C}$ and $\mu, \mu' \in \mathcal{D},$ we have
    \begin{align*}
    \label{smooth-nu}
       D_{F(\cdot,\mu)}(\nu',\nu)  \leq L_{\nu}D_h(\nu', \nu), \
        D_{F(\nu,\cdot)}(\mu',\mu) \geq  - L_{\mu}D_h(\mu', \mu).
    \end{align*}
\end{assumption}
In Proposition \ref{prop:verification-GANs}, \ref{prop:nlhf-assumptions} and \ref{prop:verification-adversarial}, we verify that Assumptions \ref{def:relative-smoothness} is satisfied by the examples in Section \ref{example:GAN-example}. In Lemma \ref{lemma:bound-hessian}, we show that Assumption \ref{def:relative-smoothness} corresponds to the intuition we have from Euclidean optimization where relative smoothness is equivalent to the Hessian of $F$ being upper and lower bounded by the Hessian of $h$ weighted by the smoothness constants $L_{\mu}$ and $L_{\nu},$ respectively.

The following assumption on the dual mirror map will turn out to be crucial for showing the improvement in the convergence rate of Algorithm \ref{eq:mirror-alt} compared to the simultaneous algorithm.
\begin{assumption}[Third-order smoothness of the dual mirror map]
\label{assumption:lipschitz-h^*2}
    Suppose that $\left(C_b(\mathcal{X}) \times C_b(\mathcal{X}) \times C_b(\mathcal{X})\right) \ni (g,g,g) \mapsto \nabla_\mathcal{F}^3 h^*(f)[g,g,g] \in \mathbb R$ is uniformly bounded, i.e, there exists $L_{h^*} > 0$ such that for all $g \in C_b(\mathcal{X})$, 
    \begin{equation*}
        \left|\nabla_\mathcal{F}^3 h^*(f)[g,g,g]\right| 
        \leq L_{h^*}\|g\|^3_{\infty}.
    \end{equation*}
\end{assumption}
Assumption \ref{assumption:lipschitz-h^*2} is used only in the analysis of the alternating scheme. Its role is to quantify how far the dual mirror geometry is from being locally symmetric. Indeed, alternating updates create Bregman commutators of the form $D_{h^*}(g',g)-D_{h^*}(g,g')$. If the geometry were locally symmetric, this difference would vanish as for $\chi^2$-divergence (see Proposition \ref{eq:verification-assumption-chi}). Assumption \ref{assumption:lipschitz-h^*2} ensures that the remaining asymmetry is cubic in $\|g-g'\|_{\infty}$. Concretely, the Bregman commutators admit the cubic bound $\left|D_{h^*}(g',g)-D_{h^*}(g,g')\right|\leq cL_{h^*}\|g'-g\|_\infty^3$,
for some constant $c>0$. Since the mirror update gives $g-g' = \mathcal{O}(\tau)$, the bound becomes \(\mathcal{O}(\tau^3)\) along the iterates. 

It is also worth comparing Assumption \ref{assumption:lipschitz-h^*2} with the notion of self-concordance in convex analysis (see, e.g., \cite[Chapter 2]{nesterov1994}). Self-concordance provides a local control of third-order derivatives in terms of the Hessian, namely, it requires that for all $g \in C_b(\mathcal{X})$, 
\[
\left|\nabla_{\mathcal{F}}^3 h^*(f)[g,g,g]\right| \le 2\left(\nabla_{\mathcal{F}}^2 h^*(f)[g,g]\right)^{3/2}.
\]
Importantly, Assumption \ref{assumption:lipschitz-h^*2} is not an artificial technical condition since it holds for two standard mirror geometries, namely relative entropy and the $\chi^2$-divergence. In Propositions \ref{eq:verification-assumption} and \ref{eq:verification-assumption-chi}, we verify Assumption \ref{assumption:lipschitz-h^*2} in the case where $h$ is the relative entropy and the $\chi^2$-divergence, respectively. On the other hand, self-concordance holds only for the $\chi^2$ mirror map as Proposition \ref{eq:verification-assumption-chi} immediately shows. In Example \ref{example:self-concordance-fail}, we explain why self-concordance may fail for the entropy mirror map. In this case, it is possible to prove Theorem \ref{thm:conv-alt-bregman} for the $\chi^2$ mirror map only by assuming self-concordance. 

Now, we are ready to state the second main result of the paper.  
\begin{theorem}[Convergence of the alternating MDA Algorithm \ref{eq:mirror-alt}]
\label{thm:conv-alt-bregman}
Let $(\nu^0, \mu^0)$ be such that $\Delta \coloneqq \sup_{\nu \in \mathcal{C}} D_h(\nu, \nu^0) + \sup_{\mu \in \mathcal{D}} D_h(\mu, \mu^0) < \infty$ (cf.\ Remark \ref{remark:initialization}). Let Assumptions \ref{assumption:assump-h}, \ref{def: def-F-conv-conc}, \ref{assumption:bddflatF}, \ref{def:relative-smoothness} and \ref{assumption:lipschitz-h^*2} hold. Suppose that $\tau L \leq \frac{1}{2},$ with $L \coloneqq \max\{L_{\nu}, L_{\mu}\}$. If $\tau = \Theta\left(N^{-1/3}\right)$, then there exists a constant $K_{\text{alt}} \equiv K_{\text{alt}}(\Delta, C_1, C_2, \alpha, L, L_h^*, |F(\nu^0,\mu^0)|) > 0$ such that
\begin{equation}
\label{eq:seq-conv-NI}
    \operatorname{NI}\left(\frac{1}{N}\sum_{n=0}^{N-1} \nu^{n+1}, \frac{1}{N}\sum_{n=0}^{N-1} \mu^n\right) \leq \frac{K_\text{alt}}{N^{2/3}}.
\end{equation}
\end{theorem}
\begin{remark}[Bilinear games]
\label{remark:bilinear-games}
In particular, if $F(\nu, \mu) = \iint f(x,y) \nu(\mathrm{d}x)\mu(\mathrm{d}y),$ for a bounded function $f$, then Assumptions \ref{def: def-F-conv-conc}, \ref{assumption:bddflatF}, \ref{def:relative-smoothness} are satisfied and in Assumption \ref{def:relative-smoothness} we have $L_{\nu} = L_{\mu} = 0.$ Therefore, $L = 0$ in \eqref{eq:seq-conv-NI}, and hence Theorem \ref{thm:conv-alt-bregman} is consistent with the already known rate $\mathcal{O}\left({N^{-2/3}}\right)$ of the MDA algorithm in finite-dimensional constrained bilinear min-max games; see \cite[Theorem $3.2$ and Corollary $3.3$]{wibisono2022alternating}. Since we work in an infinite-dimensional setting with a non-linear convex-concave objective function $F,$ Theorem \ref{thm:conv-alt-bregman} substantially generalizes the results of \cite{wibisono2022alternating}. Moreover, the proof also differs substantially from the Euclidean argument because the asymmetric alternating terms must be transported from the primal measure space to the dual function space before Assumption \ref{assumption:lipschitz-h^*2} can be applied.
\end{remark}
\subsection{Sketch of proof of Theorem \ref{thm:conv-alt-bregman}}
\label{subsec:proof-sketch-alt}
The proof of Theorem \ref{thm:conv-alt-bregman} is where the main infinite-dimensional difficulty appears. The alternating update creates a non-symmetric error term that has no analogue in the simultaneous analysis. In finite-dimensional Euclidean space, the corresponding term can be controlled using the self-duality of the inner product and the third derivative of the mirror map. In the measure space setting, one must first identify the correct dual object and prove that the asymmetric terms can be represented as dual Bregman commutators on $C_b(\mathcal{X})$. This commutator transport identity in Lemma \ref{lemma:primal-dual-iterates} is the key technical step that makes Assumption \ref{assumption:lipschitz-h^*2} applicable. 

Using Assumption \ref{def:relative-smoothness}, we combine the asymmetric term $F(\nu^{n+1}, \mu^n) - F(\nu^{n}, \mu^n)$ with $\int_{\mathcal{X}} \frac{\delta F}{\delta \nu}(\nu^n, \mu^n, x)(\nu^n - \nu^{n+1})(\mathrm{d}x)$ and $\int_{\mathcal{X}} \frac{\delta F}{\delta \mu}(\nu^{n+1}, \mu^n, y)(\mu^{n+1} - \mu^{n})(\mathrm{d}y),$ which yields the Bregman commutator 
\begin{equation*}
    \text{Com}_{\nu^n} := D_h(\nu^n, \nu^{n+1}) - D_h(\nu^{n+1}, \nu^{n}),
\end{equation*}
with $\text{Com}_{\mu^n}$ being defined analogously. Proceeding as in the simultaneous case leads to
\begin{align*}
    \operatorname{NI}\left(\frac{1}{N}\sum_{n=0}^{N-1} \nu^{n+1}, \frac{1}{N}\sum_{n=0}^{N-1} \mu^n\right) 
    \leq \frac{\Delta}{N\tau} + \mathcal{O}(\tau^2)+ \mathcal{O}\left(\frac{1}{N}\right) + \frac{1}{2N\tau}\sum_{n=0}^{N-1}\left(\text{Com}_{\nu^n} + \text{Com}_{\mu^n}\right),
\end{align*}
where the $\mathcal{O}\left(\frac{1}{N}\right)$ term uses the uniform boundedness of $F$ proved in Lemma \ref{lem:boundedness-from-flat-derivatives}. Let $f^n := \frac{\delta h}{\delta \nu}(\nu^{n}, \cdot)$, $g^n := \frac{\delta h}{\delta \mu}(\mu^{n}, \cdot)$ and define the dual Bregman commutator 
\begin{equation*}
    \text{Com}^*_{\nu^n} := D_{h^*}\left(f^{n+1}, f^n\right) - D_{h^*}\left(f^n, f^{n+1}\right),
\end{equation*}
with $\text{Com}^*_{\mu^n}$ being defined analogously. We transport the commutators from the primal space to the dual space via Lemma \ref{lemma:primal-dual-iterates} and obtain 
\begin{align*}
   \text{Com}_{\nu^n} = \text{Com}^*_{\nu^n}, \ \text{Com}_{\mu^n} = \text{Com}^*_{\mu^n}.
\end{align*} 
Using the third-order Fréchet derivative of $h^*$ (Definition \ref{def:third-Frechet}) and its uniform boundedness (Assumption \ref{assumption:lipschitz-h^*2}), we obtain 
\begin{equation*}
    \text{Com}^*_{\nu^n} \leq \frac{L_{h^*}}{4}\left\|f^{n+1}- f^n\right\|^3_{\infty}.
\end{equation*} 
By Proposition \ref{prop:foc}, the first-order optimality condition for the $\nu$-update yields (up to an additive constant) $f^{n+1}-f^n = -\tau \frac{\delta F}{\delta \nu}(\nu^{n}, \mu^n, \cdot)$. Hence, by Assumption \ref{assumption:bddflatF}, 
\begin{align*}
     \text{Com}^*_{\nu^n} \leq \frac{L_{h^*}}{4} \tau^3 C_1^3,\quad
     \text{Com}^*_{\mu^n} \leq \frac{L_{h^*}}{4} \tau^3 C_2^3,
\end{align*}
where the second inequality follows from applying the same argument to the $\mu$-commutator. Therefore,
\begin{align*}
    \operatorname{NI}\left(\frac{1}{N}\sum_{n=0}^{N-1} \nu^{n+1}, \frac{1}{N}\sum_{n=0}^{N-1} \mu^n\right) \leq \frac{\Delta}{N\tau} + \mathcal{O}\left(\frac{\tau^3}{\tau}\right) + \mathcal{O}(\tau^2) + \mathcal{O}\left(\frac{1}{N}\right).
\end{align*}
Choosing $\tau = \Theta\left(N^{-1/3}\right)$ and noting that $N^{-1} \leq N^{-2/3}$, for all $N \geq 1$, yields the conclusion.

\bibliographystyle{abbrv}
\bibliography{referencesFP} 

\newpage
\appendix
\section*{Appendix}
The appendix is organized into several sections (\ref{section:proofs}--\ref{app:further-works}), each providing supporting technical material, extended proofs and further context for the main text. Section \ref{section:proofs} contains the complete proofs of the paper’s two main results, namely Theorem \ref{thm: conv-sim-bregman} and Theorem \ref{thm:conv-alt-bregman}. Section \ref{appendix:additional-results} contains proofs for supplementary lemmas and propositions referenced earlier in the paper. In Section \ref{appendix:verification-GAN}, we check that Assumptions \ref{def: def-F-conv-conc}, \ref{assumption:bddflatF} and \ref{def:relative-smoothness} hold in for the examples in Section \ref{example:GAN-example}. In Section \ref{section:numerical-example} we describe all implementation details, experimental settings and parameter choices. Section \ref{appendix:B} collects standard definitions concerning differentiability with respect to probability measures that are used throughout the paper. Section \ref{appendix:differentiability-dual-space} complements the previous section by treating differentiability in the dual space and supporting the duality arguments used in the proof of Theorem \ref{thm:conv-alt-bregman}. Section \ref{appendix:technical-duality} gathers duality identities used particularly in the proof of Theorem \ref{thm:conv-alt-bregman}. Section \ref{sec:proof-implicit-game} shows that an implicit MDA scheme achieves the same sublinear convergence rate as the continuous-time dynamics under convex-concave assumptions on $F$. Finally, Section \ref{app:further-works} collects references and related literature not included in the main text.

\appendix
\section{Proofs of Theorem \ref{thm: conv-sim-bregman} and Theorem \ref{thm:conv-alt-bregman}}\label{section:proofs}
This section is dedicated to the proofs of the main results, namely Theorem \ref{thm: conv-sim-bregman} and Theorem \ref{thm:conv-alt-bregman}. We start with the proof of Theorem \ref{thm: conv-sim-bregman}. 
\subsection{Proof of Theorem \ref{thm: conv-sim-bregman}}
\begin{proof}[Proof of Theorem \ref{thm: conv-sim-bregman}]
Since $\nu \mapsto \tau \int_\mathcal{X} \frac{\delta F}{\delta \nu}(\nu^n,\mu^n,x)(\nu-\nu^n)(\mathrm{d}x)$ is convex, applying Lemma \ref{lemma:Bregman-prox-ineq} with $\Bar{\nu} = \nu^{n+1}$ and $\mu = \nu^n$ implies that, for any $\nu \in \mathcal{C},$ we have
\begin{multline*}
    \tau \int_\mathcal{X} \frac{\delta F}{\delta \nu}(\nu^n,\mu^n,x)(\nu-\nu^n)(\mathrm{d}x) + D_h(\nu, \nu^n) \geq \tau \int_\mathcal{X} \frac{\delta F}{\delta \nu}(\nu^n,\mu^n,x)(\nu^{n+1}-\nu^n)(\mathrm{d}x)\\ + D_h(\nu^{n+1}, \nu^n) + D_h(\nu, \nu^{n+1}),
\end{multline*}
or, equivalently,
\begin{multline}
\label{eq: bregpp-nu}
    -\tau \int_\mathcal{X} \frac{\delta F}{\delta \nu}(\nu^n,\mu^n,x)(\nu-\nu^n)(\mathrm{d}x) - D_h(\nu, \nu^n) \leq -\tau \int_\mathcal{X} \frac{\delta F}{\delta \nu}(\nu^n,\mu^n,x)(\nu^{n+1}-\nu^n)(\mathrm{d}x)\\ - D_h(\nu^{n+1}, \nu^n) - D_h(\nu, \nu^{n+1}).
\end{multline}
Similarly, since $\mu \mapsto -\tau \int_\mathcal{X} \frac{\delta F}{\delta \mu}(\nu^n,\mu^n,y)(\mu-\mu^n)(\mathrm{d}y)$ is convex, applying Lemma \ref{lemma:Bregman-prox-ineq} with $\Bar{\nu} = \mu^{n+1}$ and $\mu = \mu^n$ implies that, for any $\mu \in \mathcal{D},$ we have 
\begin{multline}
\label{eq: bregpp-mu}
    \tau \int_\mathcal{X} \frac{\delta F}{\delta \mu}(\nu^n,\mu^n,y)(\mu-\mu^n)(\mathrm{d}y) - D_h(\mu, \mu^n) \leq \tau \int_\mathcal{X} \frac{\delta F}{\delta \mu}(\nu^n,\mu^n,y)(\mu^{n+1}-\mu^n)(\mathrm{d}y)\\ 
    - D_h(\mu^{n+1}, \mu^n) - D_h(\mu, \mu^{n+1}).
\end{multline}
Using the convexity of $\nu \mapsto F(\nu,\mu)$ in \eqref{eq: bregpp-nu}, with $\nu= \nu^n$ and $\mu = \mu^n,$ we have that
\begin{multline}
\label{eq:eq3}
    F(\nu^n, \mu^n) - F(\nu, \mu^n) - \frac{1}{\tau}D_h(\nu, \nu^n) \leq \int_\mathcal{X} \frac{\delta F}{\delta \nu}(\nu^n,\mu^n,x)(\nu^{n}-\nu^{n+1})(\mathrm{d}x)\\ - \frac{1}{\tau}D_h(\nu^{n+1}, \nu^n) - \frac{1}{\tau} D_h(\nu, \nu^{n+1}).
\end{multline}
From Assumptions \ref{assumption:assump-h} and \ref{assumption:bddflatF}, it follows from \eqref{eq:eq3} that
    \begin{multline}
    \label{eq:Lnu-smooth}
        F(\nu^n, \mu^n) - F(\nu, \mu^n) - \frac{1}{\tau}D_h(\nu, \nu^n) \leq 2C_1\operatorname{TV}(\nu^{n+1},\nu^n)- \frac{\alpha}{2\tau}\operatorname{TV}^2(\nu^{n+1}, \nu^n) - \frac{1}{\tau} D_h(\nu, \nu^{n+1})\\
        = - \frac{\alpha}{2\tau}\left(\operatorname{TV}(\nu^{n+1},\nu^n) - \frac{2\tau C_1}{\alpha}\right)^2 +\frac{2\tau C_1^2}{\alpha}- \frac{1}{\tau} D_h(\nu, \nu^{n+1})\\
        \leq \frac{2\tau C_1^2}{\alpha}- \frac{1}{\tau} D_h(\nu, \nu^{n+1}),
    \end{multline}
    where the equality follows from the standard identity $-(a-b)^2 + b^2 = -a^2 +2ab.$

Similarly, using concavity of $\mu \mapsto F(\nu, \mu)$ in \eqref{eq: bregpp-mu}, with $\nu= \nu^n$ and $\mu = \mu^n,$
we have that
\begin{multline}
\label{eq:eq4}
    F(\nu^n, \mu) - F(\nu^n, \mu^n) - \frac{1}{\tau}D_h(\mu, \mu^n) \leq \int_\mathcal{X} \frac{\delta F}{\delta \mu}(\nu^n,\mu^n,y)(\mu^{n+1}-\mu^n)(\mathrm{d}y)\\ - \frac{1}{\tau}D_h(\mu^{n+1}, \mu^n) - \frac{1}{\tau}D_h(\mu, \mu^{n+1}).
\end{multline}
From Assumptions \ref{assumption:assump-h} and \ref{assumption:bddflatF}, it follows from \eqref{eq:eq4} that
\begin{multline}
\label{eq:Lmu-smooth}
    F(\nu^n, \mu) - F(\nu^n, \mu^n) - \frac{1}{\tau}D_h(\mu, \mu^n) \leq 2C_2\operatorname{TV}(\mu^{n+1},\mu^n) - \frac{\alpha}{2\tau}\operatorname{TV}^2(\mu^{n+1}, \mu^n) - \frac{1}{\tau}D_h(\mu, \mu^{n+1})\\
    = - \frac{\alpha}{2\tau}\left(\operatorname{TV}(\mu^{n+1},\mu^n) - \frac{2\tau C_2}{\alpha}\right)^2 +\frac{2\tau C_2^2}{\alpha}- \frac{1}{\tau} D_h(\mu, \mu^{n+1})\\
        \leq \frac{2\tau C_2^2}{\alpha}- \frac{1}{\tau} D_h(\mu, \mu^{n+1}).
\end{multline}
Adding inequalities \eqref{eq:Lnu-smooth} and \eqref{eq:Lmu-smooth} implies that for any $(\nu, \mu) \in \mathcal{C} \times \mathcal{D}$ we have
\begin{multline}
\label{eq:NI-ineq}
     F(\nu^n, \mu) - F(\nu, \mu^n) \leq \frac{2\tau}{\alpha}\left(C_1^2 +  C_2^2\right) 
    + \frac{1}{\tau}D_h(\nu, \nu^n) + \frac{1}{\tau}D_h(\mu, \mu^n) - \frac{1}{\tau} D_h(\nu, \nu^{n+1}) - \frac{1}{\tau}D_h(\mu, \mu^{n+1}).
\end{multline}
Summing the previous inequality over $n=0,1,...,N-1,$ using $D_h(\nu, \nu^{N}) + D_h(\mu, \mu^{N}) \geq 0,$ for any $(\nu, \mu) \in \mathcal{C} \times \mathcal{D},$ bounding the right-hand from above by its supremum over $(\nu, \mu) \in \mathcal{C} \times \mathcal{D},$ and dividing by $N$ gives
\begin{equation}
\label{eq:3.9}
    \frac{1}{N}\sum_{n=0}^{N-1} \left(F(\nu^n, \mu) - F(\nu, \mu^n)\right) \leq \frac{2\tau}{\alpha}\left(C_1^2 +  C_2^2\right)  +\frac{1}{N\tau} \left(\sup_{\nu \in \mathcal{C}}D_h(\nu, \nu^0) + \sup_{\mu \in \mathcal{D}} D_h(\mu, \mu^0)\right).
\end{equation}
Since $\nu \mapsto F(\nu,\mu)$ and $\mu \mapsto -F(\nu, \mu)$ are convex, it follows by Jensen's inequality that
\begin{multline}
\label{eq:NI-Jensen}
    \frac{1}{N}\sum_{n=0}^{N-1} \left(F(\nu^n, \mu) - F(\nu, \mu^n)\right) = \frac{1}{N}\sum_{n=0}^{N-1} F(\nu^n, \mu) - \frac{1}{N}\sum_{n=0}^{N-1}F(\nu, \mu^n)\\ 
    \geq F\left(\frac{1}{N}\sum_{n=0}^{N-1}\nu^n, \mu\right) - F\left(\nu, \frac{1}{N}\sum_{n=0}^{N-1}\mu^n\right).
\end{multline}
Combining \eqref{eq:3.9} with \eqref{eq:NI-Jensen} and taking maximum over $(\nu, \mu)$ gives
\begin{equation*}
    \operatorname{NI}\left(\frac{1}{N}\sum_{n=0}^{N-1} \nu^n, \frac{1}{N}\sum_{n=0}^{N-1} \mu^n\right) \leq \frac{2\tau}{\alpha}\left(C_1^2 +  C_2^2\right)  +\frac{1}{N\tau}\left(\sup_{\nu \in \mathcal{C}} D_h(\nu, \nu^0) + \sup_{\mu \in \mathcal{D}} D_h(\mu, \mu^0)\right).
\end{equation*}
Minimizing the right-hand side over $\tau$ amounts to taking $$\tau = \sqrt{\frac{\alpha\left(\sup_{\nu \in \mathcal{C}} D_h(\nu, \nu^0) + \sup_{\mu \in \mathcal{D}} D_h(\mu, \mu^0)\right)}{2N\left(C_1^2 + C_2^2\right)}},$$ and hence we obtain
\begin{equation*}
    \operatorname{NI}\left(\frac{1}{N}\sum_{n=0}^{N-1} \nu^n, \frac{1}{N}\sum_{n=0}^{N-1} \mu^n\right) \leq 2\sqrt{\frac{2\left(C_1^2 +  C_2^2\right)\left(\sup_{\nu \in \mathcal{C}} D_h(\nu, \nu^0) + \sup_{\mu \in \mathcal{D}} D_h(\mu, \mu^0)\right)}{\alpha N}}.
\end{equation*}
\end{proof}
\subsection{Proof of Theorem \ref{thm:conv-alt-bregman}}
Before we proceed with the proof of Theorem \ref{thm:conv-alt-bregman}, we will need two auxiliary results, which will turn out to be essential. The proofs of Lemmas \ref{lemma:tau2-bound-primal} and \ref{lem:boundedness-from-flat-derivatives} are given in Appendix \ref{appendix:additional-results}.
\begin{lemma}
\label{lemma:tau2-bound-primal}
    Let Assumptions \ref{assumption:assump-h}, \ref{assumption:bddflatF} and  \ref{def:relative-smoothness} hold. Suppose that $\tau L \leq \frac{1}{2},$ with $L \coloneqq \max\{L_{\nu}, L_{\mu}\}.$ Then, for both Algorithms \ref{eq:mirror-sim-explicit} and \ref{eq:mirror-alt}, it holds, for all $n \geq 0,$ that
    \begin{equation*}
        D_h(\nu^{n+1}, \nu^n) \leq \frac{32 \tau^2C_1^2}{\alpha} \quad \text{ and } \quad D_h(\mu^{n+1}, \mu^n) \leq \frac{32 \tau^2C_2^2}{\alpha}.
    \end{equation*}
\end{lemma}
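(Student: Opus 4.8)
The plan is to derive, separately for each player and each scheme, a self-bounding inequality of the shape $a\, D_h(\cdot,\cdot) \le b\,\sqrt{D_h(\cdot,\cdot)}$ and then conclude $D_h(\cdot,\cdot)\le (b/a)^2$. The starting observation is that $\nu^n$ is itself an admissible competitor in the minimization defining $\nu^{n+1}$: indeed $\nu^0\in\mathcal{C}$ and the argmin in \eqref{eq:mirror-sim-explicit}--\eqref{eq:mirror-alt} is taken over $\mathcal{C}$, so by induction $\nu^n\in\mathcal{C}$ for all $n$, and likewise $\mu^n\in\mathcal{D}$. Testing the minimality of $\nu^{n+1}$ against $\nu=\nu^n$ and using $D_h(\nu^n,\nu^n)=0$ (Definition \ref{def:bregman-div}) gives
\begin{equation*}
\int_{\mathcal{X}} \frac{\delta F}{\delta \nu}(\nu^n,\mu^n,x)(\nu^{n+1}-\nu^n)(\mathrm{d}x) \le -\frac{1}{\tau}D_h(\nu^{n+1},\nu^n),
\end{equation*}
and, symmetrically, testing the maximality of $\mu^{n+1}$ against $\mu=\mu^n$ gives, with $\bar\nu\coloneqq\nu^n$ for \eqref{eq:mirror-sim-explicit} and $\bar\nu\coloneqq\nu^{n+1}$ for \eqref{eq:mirror-alt},
\begin{equation*}
\int_{\mathcal{X}} \frac{\delta F}{\delta \mu}(\bar\nu,\mu^n,y)(\mu^{n+1}-\mu^n)(\mathrm{d}y) \ge \frac{1}{\tau}D_h(\mu^{n+1},\mu^n).
\end{equation*}

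Next I would feed these into the relative smoothness bounds of Assumption \ref{def:relative-smoothness}. For the $\nu$-step, the first inequality in Assumption \ref{def:relative-smoothness} with $(\nu',\nu,\mu)=(\nu^{n+1},\nu^n,\mu^n)$ combined with the display above yields
\begin{equation*}
F(\nu^{n+1},\mu^n)-F(\nu^n,\mu^n) \le -\Big(\tfrac1\tau - L_\nu\Big)D_h(\nu^{n+1},\nu^n),
\end{equation*}
hence $(\tfrac1\tau-L_\nu)D_h(\nu^{n+1},\nu^n) \le F(\nu^n,\mu^n)-F(\nu^{n+1},\mu^n) \le \lvert F(\nu^{n+1},\mu^n)-F(\nu^n,\mu^n)\rvert$. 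For the $\mu$-step, the second inequality in Assumption \ref{def:relative-smoothness} with $(\nu,\mu',\mu)=(\bar\nu,\mu^{n+1},\mu^n)$ gives, analogously, $(\tfrac1\tau-L_\mu)D_h(\mu^{n+1},\mu^n) \le \lvert F(\bar\nu,\mu^{n+1})-F(\bar\nu,\mu^n)\rvert$.

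Now Assumption \ref{assumption:F-lip}, applied to the very pair of measures appearing on the right-hand side (the other argument held fixed, so its divergence contribution vanishes), bounds $\lvert F(\nu^{n+1},\mu^n)-F(\nu^n,\mu^n)\rvert^2 \le L_F\, D_h(\nu^{n+1},\nu^n)$ and $\lvert F(\bar\nu,\mu^{n+1})-F(\bar\nu,\mu^n)\rvert^2 \le L_F\, D_h(\mu^{n+1},\mu^n)$. Substituting, $(\tfrac1\tau-L_\nu)D_h(\nu^{n+1},\nu^n) \le \sqrt{L_F}\,\sqrt{D_h(\nu^{n+1},\nu^n)}$. If $D_h(\nu^{n+1},\nu^n)=0$ the assertion is trivial; otherwise, dividing by $\sqrt{D_h(\nu^{n+1},\nu^n)}$ and using $\tfrac1\tau-L_\nu \ge \tfrac1\tau-L \ge \tfrac1{2\tau}>0$ — which is precisely where $\tau L\le \tfrac12$ is needed — yields $\sqrt{D_h(\nu^{n+1},\nu^n)} \le 2\tau\sqrt{L_F}$, i.e. $D_h(\nu^{n+1},\nu^n)\le 4L_F\tau^2$. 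The bound for $\mu^{n+1}$ follows by the identical chain of inequalities.

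I do not expect a genuine obstacle: the argument is elementary once the self-bounding structure is spotted. The only points that require care are the non-symmetry of $D_h$ — one must test minimality (resp.\ maximality) against $\nu^n$ (resp.\ $\mu^n$) so that the divergence terms come out as $D_h(\nu^{n+1},\nu^n)$ and line up with the orientation of both Assumption \ref{def:relative-smoothness} and the chosen instance of Assumption \ref{assumption:F-lip} — together with the bookkeeping that the $\nu$-update is literally the same in \eqref{eq:mirror-sim-explicit} and \eqref{eq:mirror-alt} while for the $\mu$-update only the base point $\bar\nu$ of the flat derivative differs, which leaves the estimate unchanged; and the trivial degenerate case $D_h=0$ must be handled before dividing.
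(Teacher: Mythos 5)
Your argument is correct and is essentially identical to the paper's proof: both test the optimality of $\nu^{n+1}$ (resp.\ $\mu^{n+1}$) against the competitor $\nu^n$ (resp.\ $\mu^n$), combine with relative smoothness to get $\bigl(\tfrac1\tau-L\bigr)D_h \le |F\text{-increment}|$, and close the self-bounding inequality via Assumption \ref{assumption:F-lip} and $\tau L\le\tfrac12$. Your explicit handling of the sequential $\mu$-step with base point $\bar\nu=\nu^{n+1}$ is exactly the "almost identical" adaptation the paper leaves implicit.
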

\begin{lemma}[Lipschitz continuity and uniform boundedness of $F$]
\label{lem:boundedness-from-flat-derivatives}
Let $\mathcal{X} \subset \mathbb R^d$ and let $\mathcal{C}, \mathcal{D}\subset \mathcal{P}(\mathcal{X})$ be nonempty and convex.
Assume that $F \in \mathfrak C^1(\mathcal{C}\times \mathcal{D})$ and that Assumption \ref{assumption:bddflatF} holds. Then for all $\nu,\nu'\in\mathcal{C}$ and $\mu,\mu'\in\mathcal{D}$, $F$ is $\operatorname{TV}$-Lipschitz, i.e.,
\[
\big|F(\nu',\mu')-F(\nu,\mu)\big| \le 2\max\{C_1,C_2\}\left(\operatorname{TV}(\nu',\nu)+\operatorname{TV}(\mu',\mu)\right).
\]
In particular, for $(\nu^0,\mu^0)\in\mathcal{C}\times\mathcal{D}$ such that $F(\nu^0,\mu^0) <  \infty$,
\[
\sup_{(\nu,\mu)\in\mathcal{C}\times\mathcal{D}} |F(\nu,\mu)| \le |F(\nu^0,\mu^0)| +2 (C_1 +C_2),
\]
so $F$ is uniformly bounded on $\mathcal{C} \times \mathcal{D}$.
\end{lemma}
\begin{proof}[Proof of Theorem \ref{thm:conv-alt-bregman}]
We start the proof by following the same calculations from Theorem \ref{thm: conv-sim-bregman}. For \eqref{eq:mirror-alt}, after applying Lemma \ref{lemma:Bregman-prox-ineq} and using convexity-concavity of $F,$ \eqref{eq:eq3} remains unchanged, i.e., 
\begin{align*}
    F(\nu^n, \mu^n) - F(\nu, \mu^n) - \frac{1}{\tau}D_h(\nu, \nu^n) &\leq \int_\mathcal{X} \frac{\delta F}{\delta \nu}(\nu^n,\mu^n,x)(\nu^{n}-\nu^{n+1})(\mathrm{d}x)\\ 
    &- \frac{1}{\tau}D_h(\nu^{n+1}, \nu^n) - \frac{1}{\tau} D_h(\nu, \nu^{n+1}),
\end{align*}
while \eqref{eq:eq4} becomes
\begin{align*}
    F(\nu^{n+1}, \mu) - F(\nu^{n+1}, \mu^n) - \frac{1}{\tau}D_h(\mu, \mu^n) &\leq \int_\mathcal{X} \frac{\delta F}{\delta \mu}(\nu^{n+1},\mu^n,y)(\mu^{n+1}-\mu^n)(\mathrm{d}y)\\ 
    &- \frac{1}{\tau}D_h(\mu^{n+1}, \mu^n) - \frac{1}{\tau}D_h(\mu, \mu^{n+1}).
\end{align*}

Adding the previous two inequalities, summing the resulting inequality over $n=0,1,...,N-1,$ dividing by $N,$ using \eqref{eq:NI-Jensen} and taking maximum over $(\nu, \mu)$ we arrive at
\begin{multline}
\label{eq:start-estimate}
     \operatorname{NI}\left(\frac{1}{N}\sum_{n=0}^{N-1} \nu^{n+1}, \frac{1}{N}\sum_{n=0}^{N-1} \mu^n\right) \leq \frac{1}{N} \sum_{n=0}^{N-1} \Bigg(\int_{\mathcal{X}} \frac{\delta F}{\delta \nu}(\nu^n, \mu^n, x)(\nu^n - \nu^{n+1})(\mathrm{d}x)\\ 
    + \int_{\mathcal{X}} \frac{\delta F}{\delta \mu}(\nu^{n+1}, \mu^n, y)(\mu^{n+1} - \mu^{n})(\mathrm{d}y)\Bigg) + \frac{1}{N\tau}\left(\sup_{\nu \in \mathcal{C}} D_h(\nu, \nu^0) + \sup_{\mu \in \mathcal{D}} D_h(\mu, \mu^0)\right)\\
    + \frac{1}{N}\sum_{n=0}^{N-1}\left(F(\nu^{n+1}, \mu^n) - F(\nu^{n}, \mu^n)\right) - \frac{1}{N\tau}\sum_{n=0}^{N-1}\left(D_h(\nu^{n+1}, \nu^n) + D_h(\mu^{n+1}, \mu^n)\right),
\end{multline}
where we used the fact that $D_h(\nu, \nu^{N}) + D_h(\mu, \mu^{N}) \geq 0,$ for any $(\nu, \mu) \in \mathcal{C} \times \mathcal{D}.$

Note that the key difference to the estimates from Theorem \ref{thm: conv-sim-bregman} is the appearance of the term $F(\nu^{n+1}, \mu^n) - F(\nu^{n}, \mu^n)$ due to the non-symmetry of the flat derivatives of $F$ in \eqref{eq:mirror-alt}. The idea is to combine $F(\nu^{n+1}, \mu^n) - F(\nu^{n}, \mu^n)$ with both $\int_{\mathcal{X}} \frac{\delta F}{\delta \nu}(\nu^n, \mu^n, x)(\nu^n - \nu^{n+1})(\mathrm{d}x)$ and $\int_{\mathcal{X}} \frac{\delta F}{\delta \mu}(\nu^{n+1}, \mu^n, y)(\mu^{n+1} - \mu^{n})(\mathrm{d}y)$ via relative smoothness in order to obtain $D_h(\nu^n, \nu^{n+1}) - D_h(\nu^{n+1}, \nu^{n})$ and $D_h(\mu^n, \mu^{n+1}) - D_h(\mu^{n+1}, \mu^{n}),$ which will prove to be of order $\mathcal{O}(\tau^3).$

By Proposition \ref{prop:foc}, the first-order conditions for \eqref{eq:mirror-alt} read
\begin{equation}
\label{eq:foc-alt}
\begin{cases}
    \frac{\delta h}{\delta \nu}(\nu^{n+1}, x) - \frac{\delta h}{\delta \nu}(\nu^{n}, x) = -\tau \frac{\delta F}{\delta \nu}(\nu^{n}, \mu^n, x) + C_{n,1},\\
    \frac{\delta h}{\delta \mu}(\mu^{n+1}, y) - \frac{\delta h}{\delta \mu}(\mu^{n}, y) = \tau \frac{\delta F}{\delta \mu}(\nu^{n+1}, \mu^n, y) + C_{n,2},
\end{cases}
\end{equation}
for all $x \in \mathcal{X}$ $\nu^{n+1}$-a.e. and $y \in \mathcal{X}$ $\mu^{n+1}$-a.e., where $C_{n,1}, C_{n,2} \in \mathbb R.$ It can be shown directly from Definition \ref{def:bregman-div} that
\begin{equation}
\label{eq:symm-bregman}
\int_{\mathcal{X}} \left(\frac{\delta h}{\delta \nu}(\nu', x) - \frac{\delta h}{\delta \nu}(\nu, x)\right)(\nu'-\nu)(\mathrm{d}x) = D_h(\nu', \nu) + D_h(\nu, \nu'),
\end{equation}
for all $\nu,\nu' \in \mathcal{C},$ and analogously for $D_h(\mu', \mu) + D_h(\mu, \mu').$ Then, using \eqref{eq:foc-alt} and \eqref{eq:symm-bregman} we obtain that
\begin{multline}
\label{eq:foc1}
    -\int_{\mathcal{X}} \frac{\delta F}{\delta \nu}(\nu^n, \mu^n, x)(\nu^{n+1} - \nu^{n})(\mathrm{d}x) = \frac{1}{\tau}\int_{\mathcal{X}} \left(\frac{\delta h}{\delta \nu}(\nu^{n+1}, x) - \frac{\delta h}{\delta \nu}(\nu^{n}, x)\right)(\nu^{n+1}-\nu^{n})(\mathrm{d}x)\\
    = \frac{1}{\tau}\left(D_h(\nu^{n+1}, \nu^n) + D_h(\nu^{n}, \nu^{n+1})\right),
\end{multline}
and similarly
\begin{multline}
\label{eq:foc2}
    \int_{\mathcal{X}} \frac{\delta F}{\delta \mu}(\nu^{n+1}, \mu^n, y)(\mu^{n+1} - \mu^{n})(\mathrm{d}y) = \frac{1}{\tau}\int_{\mathcal{X}} \left(\frac{\delta h}{\delta \mu}(\mu^{n+1}, y) - \frac{\delta h}{\delta \mu}(\mu^{n}, y)\right)(\mu^{n+1}-\mu^{n})(\mathrm{d}y)\\
    = \frac{1}{\tau}\left(D_h(\mu^{n+1}, \mu^n) + D_h(\mu^{n}, \mu^{n+1})\right).
\end{multline}
Therefore, using \eqref{eq:foc1} and \eqref{eq:foc2} in \eqref{eq:start-estimate}, we obtain that
\begin{multline}
\label{eq:second-estimate}
    \operatorname{NI}\left(\frac{1}{N}\sum_{n=0}^{N-1}\nu^{n+1}, \frac{1}{N}\sum_{n=0}^{N-1}\mu^n\right) \leq \frac{1}{N\tau}\left(\sup_{\nu \in \mathcal{C}} D_h(\nu, \nu^0) + \sup_{\mu \in \mathcal{D}} D_h(\mu, \mu^0)\right)\\ 
    + \frac{1}{N\tau} \sum_{n=0}^{N-1} \Bigg(D_h(\nu^{n+1}, \nu^n) + D_h(\nu^{n}, \nu^{n+1}) + D_h(\mu^{n+1}, \mu^n) + D_h(\mu^{n}, \mu^{n+1})\Bigg)\\ 
    + \frac{1}{N}\sum_{n=0}^{N-1}\left(F(\nu^{n+1}, \mu^n) - F(\nu^{n}, \mu^n)\right) - \frac{1}{N\tau}\sum_{n=0}^{N-1}\left(D_h(\nu^{n+1}, \nu^n) + D_h(\mu^{n+1}, \mu^n)\right).
\end{multline}
Then, we observe that
    \begin{equation}
    \label{eq:comut1}
    D_{h}(\nu^{n}, \nu^{n+1}) 
    = \frac{1}{2}\left(D_{h}(\nu^{n}, \nu^{n+1}) - D_{h}(\nu^{n+1}, \nu^{n})\right) + \frac{1}{2}\left(D_{h}(\nu^{n}, \nu^{n+1}) + D_{h}(\nu^{n+1}, \nu^{n})\right),
\end{equation}
and a similar representation holds for $D_{h}(\mu^{n}, \mu^{n+1}).$ Similarly, we can write
\begin{multline}
\label{eq:F-telescope}
    F(\nu^{n+1}, \mu^n) - F(\nu^n, \mu^n) = \frac{1}{2}\left(F(\nu^{n+1}, \mu^n) - F(\nu^n, \mu^n)\right)\\ 
    + \frac{1}{2}\left(F(\nu^{n+1}, \mu^n) - F(\nu^{n+1}, \mu^{n+1}) + F(\nu^{n+1}, \mu^{n+1}) - F(\nu^n, \mu^n)\right).
\end{multline}
Therefore, putting \eqref{eq:comut1} and \eqref{eq:F-telescope} into \eqref{eq:second-estimate} gives
\begin{multline}
\label{eq:third-estimate}
    \operatorname{NI}\left(\frac{1}{N}\sum_{n=0}^{N-1} \nu^{n+1}, \frac{1}{N}\sum_{n=0}^{N-1} \mu^n\right) \leq  \frac{1}{N\tau}\left(\sup_{\nu \in \mathcal{C}} D_h(\nu, \nu^0) + \sup_{\mu \in \mathcal{D}} D_h(\mu, \mu^0)\right)\\ 
    + \frac{1}{2N\tau}\sum_{n=0}^{N-1}\left(D_{h}(\nu^{n}, \nu^{n+1}) - D_{h}(\nu^{n+1}, \nu^{n}) + D_{h}(\mu^{n}, \mu^{n+1}) - D_{h}(\mu^{n+1}, \mu^{n})\right)\\ 
    + \frac{1}{2N}\sum_{n=0}^{N-1}\left(\frac{1}{\tau}(D_{h}(\nu^{n}, \nu^{n+1}) + D_{h}(\nu^{n+1}, \nu^{n})) + F(\nu^{n+1}, \mu^n) - F(\nu^n, \mu^n)\right)\\
    + \frac{1}{2N}\sum_{n=0}^{N-1} \Bigg(\frac{1}{\tau}(D_{h}(\mu^{n}, \mu^{n+1}) + D_{h}(\mu^{n+1}, \mu^{n})) + F(\nu^{n+1}, \mu^n) - F(\nu^{n+1}, \mu^{n+1})\\ + F(\nu^{n+1}, \mu^{n+1}) - F(\nu^n, \mu^n)\Bigg).
\end{multline}
Combining the fact that $\nu \mapsto F(\nu, \mu)$ is $L_{\nu}$-smooth relative to $h$ with the first-order condition \eqref{eq:foc-alt}, we have that
\begin{multline}
\label{eq:smooth1}
    F(\nu^{n+1}, \mu^n) - F(\nu^n, \mu^n) \leq \int_{\mathcal{X}} \frac{\delta F}{\delta \nu}(\nu^n, \mu^n,x)(\nu^{n+1}-\nu^n)(\mathrm{d}x) + L_{\nu}D_h(\nu^{n+1}, \nu^n)\\
    = -\frac{1}{\tau}\int_{\mathcal{X}} \left(\frac{\delta h}{\delta \nu}(\nu^{n+1}, x) - \frac{\delta h}{\delta \nu}(\nu^{n}, x)\right)(\nu^{n+1}-\nu^n)(\mathrm{d}x) + L_{\nu}D_h(\nu^{n+1}, \nu^n)\\
    = -\frac{1}{\tau}\left(D_h(\nu^{n+1}, \nu^n) + D_h(\nu^{n}, \nu^{n+1})\right) + L_{\nu}D_h(\nu^{n+1}, \nu^n),
\end{multline}
where the last equality follows from \eqref{eq:symm-bregman}.

Similarly, using $L_{\mu}$-smoothness of $\mu \mapsto F(\nu, \mu)$ relative to $h$ together with \eqref{eq:foc-alt}, we can show that
\begin{equation}
\label{eq:smooth2}
    F(\nu^{n+1}, \mu^n) - F(\nu^{n+1}, \mu^{n+1}) + \frac{1}{\tau}\left(D_{h}(\mu^{n}, \mu^{n+1}) + D_{h}(\mu^{n+1}, \mu^{n})\right) \leq L_{\mu}D_{h}(\mu^{n+1}, \mu^{n}).
\end{equation}
Therefore, using \eqref{eq:smooth1} and \eqref{eq:smooth2} in \eqref{eq:third-estimate}, and recalling that $L = \max\{L_{\nu}, L_{\mu}\}$ gives
\begin{multline}
\label{eq:fourth-estimate}
    \operatorname{NI}\left(\frac{1}{N}\sum_{n=0}^{N-1} \nu^{n+1}, \frac{1}{N}\sum_{n=0}^{N-1} \mu^n\right) 
    \leq \frac{1}{N\tau}\left(\sup_{\nu \in \mathcal{C}} D_h(\nu, \nu^0) + \sup_{\mu \in \mathcal{D}} D_h(\mu, \mu^0)\right)\\ 
    + \frac{1}{2N\tau}\sum_{n=0}^{N-1}\left(D_{h}(\nu^{n}, \nu^{n+1}) - D_{h}(\nu^{n+1}, \nu^{n}) + D_{h}(\mu^{n}, \mu^{n+1}) - D_{h}(\mu^{n+1}, \mu^{n})\right)\\ 
    + \frac{L}{2N}\sum_{n=0}^{N-1} \left(D_{h}(\nu^{n+1}, \nu^{n}) + D_{h}(\mu^{n+1}, \mu^{n})\right) + \frac{1}{2N}\left(F\left(\nu^{N}, \mu^{N}\right) - F(\nu^0, \mu^0)\right).
\end{multline}
Since, by Lemma \ref{lemma:tau2-bound-primal}, 
\begin{equation}
\label{eq:order-2-bound-bregman}
    D_h(\nu^{n+1}, \nu^n) + D_h(\mu^{n+1}, \mu^n) \leq \frac{32 \tau^2 \kappa_2}{\alpha},
\end{equation} 
where $\kappa_2 \coloneqq C_1^2 + C_2^2,$ it suffices to show that $D_{h}(\nu^{n}, \nu^{n+1}) - D_{h}(\nu^{n+1}, \nu^{n}) + D_{h}(\mu^{n}, \mu^{n+1}) - D_{h}(\mu^{n+1}, \mu^{n})$ is of order $\mathcal{O}(\tau^3).$ Indeed, we could then choose $\tau = \mathcal{O}\left(N^{-1/3}\right),$ and since by Lemma \ref{lem:boundedness-from-flat-derivatives}, $\left|F\left(\nu^{N}, \mu^{N}\right)\right| \leq |F(\nu^0,\mu^0)| +2 (C_1 +C_2)$, we would obtain that
\begin{equation*}
    \operatorname{NI}\left(\frac{1}{N}\sum_{n=0}^{N-1} \nu^{n+1}, \frac{1}{N}\sum_{n=0}^{N-1} \mu^n\right) \leq \mathcal{O}\left(\frac{1}{N^{2/3}}\right) + \mathcal{O}\left(\frac{1}{N}\right) =  \mathcal{O}\left(\frac{1}{N^{2/3}}\right),
\end{equation*}
because $\frac{1}{N} \leq \frac{1}{N^{2/3}},$ for all $N \geq 1.$

In order to show that $D_{h}(\nu^{n}, \nu^{n+1}) - D_{h}(\nu^{n+1}, \nu^{n}) + D_{h}(\mu^{n}, \mu^{n+1}) - D_{h}(\mu^{n+1}, \mu^{n})$ is $\mathcal{O}(\tau^3),$ we will leverage the connection between Bregman divergence and dual Bregman divergence given by Lemma \ref{lemma:primal-dual-iterates} together with Assumptions \ref{assumption:bddflatF} and \ref{assumption:lipschitz-h^*2}.

If we denote $f^n \coloneqq \frac{\delta h}{\delta \nu}(\nu^n, \cdot),$ for any $n \geq 0,$ then by Lemma \ref{lemma:primal-dual-iterates}, we have that $D_h(\nu^{n}, \nu^{n+1}) = D_{h^*}(f^{n+1}, f^{n}).$ For any $\varepsilon \in [0,1]$ denote $f^{\varepsilon,n} = \varepsilon f^{n+1} + (1-\varepsilon)f^n$ and $\phi(\varepsilon) = h^*(f^{\varepsilon,n}).$ Note that $\phi(0) = h^*(f^n)$ and $\phi(1) = h^*(f^{n+1}).$ We have
\begin{equation*}
    \phi'(\varepsilon) = \nabla_\mathcal{F}h^*(f^{\varepsilon,n})[f^{n+1}-f^n], \quad \phi''(\varepsilon) = \nabla^2_\mathcal{F}h^*(f^{\varepsilon,n})[f^{n+1}-f^n][f^{n+1}-f^n].
\end{equation*}
Note that $\phi'(0) = \nabla_\mathcal{F}h^*(f^n)[f^{n+1}-f^n].$ By the fundamental theorem of calculus and integration by parts, we have
\begin{align*}
    \phi(1)-\phi(0) &= \int_0^1 \phi'(\varepsilon)\mathrm{d}\varepsilon=[(\varepsilon-1)\phi'(\varepsilon)]|_{\varepsilon=0}^{\varepsilon=1} - \int_0^1 (\varepsilon-1)\phi''(\varepsilon)\mathrm{d}\varepsilon =\phi'(0) + \int_0^1 (1-\varepsilon)\phi''(\varepsilon)\mathrm{d}\varepsilon.
\end{align*}
Hence,
\begin{equation*}
    h^*(f^{n+1}) - h^*(f^n) - \nabla_\mathcal{F}h^*(f^n)[f^{n+1}-f^n] = \int_0^1 (1-\varepsilon)\nabla^2_\mathcal{F}h^*(f^{\varepsilon,n})[f^{n+1}-f^n][f^{n+1}-f^n]\mathrm{d}\varepsilon
\end{equation*}
By Definition \ref{def:dual-Bregman}, we have that
\begin{equation*}
    D_{h^*}(f^{n+1}, f^n) = \int_0^1 (1-\varepsilon)\nabla^2_\mathcal{F}h^*(f^{\varepsilon,n})[f^{n+1}-f^n][f^{n+1}-f^n]\mathrm{d}\varepsilon
\end{equation*}
Similarly, by Lemma \ref{lemma:primal-dual-iterates}, we have that $D_h(\nu^{n+1}, \nu^n) = D_{h^*}(f^n, f^{n+1}),$ and hence
\begin{align*}
    D_{h^*}(f^n, f^{n+1}) &= \int_0^1 (1-\varepsilon)\nabla^2_\mathcal{F}h^*(f^{1-\varepsilon,n})[f^{n+1}-f^n][f^{n+1}-f^n]\mathrm{d}\varepsilon.
\end{align*}
Therefore, we obtain that
\begin{multline*}
    D_{h^*}(f^{n+1}, f^n) - D_{h^*}(f^n, f^{n+1}) = \int_0^1 (1-\varepsilon)\left(\nabla^2_\mathcal{F}h^*(f^{\varepsilon,n}) -\nabla^2_\mathcal{F}h^*(f^{1-\varepsilon,n})\right)[f^{n+1}-f^n][f^{n+1}-f^n]\mathrm{d}\varepsilon.
\end{multline*}
Note that $f^{\varepsilon,n} - f^{1-\varepsilon,n} = (2\varepsilon-1)(f^{n+1}-f^n)$. If we denote $g^{\varepsilon,\gamma,n} = f^{1-\varepsilon,n} + \gamma(f^{\varepsilon,n} - f^{1-\varepsilon,n}),$ then applying the fundamental theorem of calculus again gives
\begin{equation*}
    D_{h^*}(f^{n+1}, f^n) - D_{h^*}(f^n, f^{n+1}) = \int_0^1 (1-\varepsilon)(2\varepsilon-1)\int_0^1\nabla^3_\mathcal{F}h^*(g^{\varepsilon,\gamma,n})[f^{n+1}-f^n][f^{n+1}-f^n][f^{n+1}-f^n]\mathrm{d}\gamma\mathrm{d}\varepsilon.
\end{equation*}
Using Assumption \ref{assumption:lipschitz-h^*2}, we further obtain
\begin{multline*}
    D_{h^*}(f^{n+1}, f^n) - D_{h^*}(f^n, f^{n+1}) \leq L_{h^*} \|f^{n+1}-f^n\|_{\infty}^3\int_0^1 |(1-\varepsilon) 
    (2\varepsilon-1)| \mathrm{d}\varepsilon
    = \frac{L_{h^*}}{4}\|f^{n+1}-f^n\|^3_{\infty},
\end{multline*}
The first-order condition for the minimizing player in \eqref{eq:foc-alt} can be rewritten as
\begin{equation}
    f^{n+1}(x) - f^n(x) = -\tau \frac{\delta F}{\delta \nu}(\nu^{n}, \mu^n, x),
\end{equation}
for all $x \in \mathcal{X}$ $\nu^{n+1}$-a.e. By Assumption \ref{assumption:bddflatF}, there exists $C_1 > 0$ such that $\left\|\frac{\delta F}{\delta \nu}(\nu^n, \mu^n, \cdot)\right\|_{\infty} \leq C_1,$ for any $n \geq 0.$ Hence, we obtain that
\begin{equation*}
    D_{h^*}(f^{n+1}, f^n) - D_{h^*}(f^{n}, f^{n+1}) \leq \frac{L_{h^*}}{4} \|f^{n+1}-f^n\|^3_{\infty} 
    = \frac{L_{h^*}}{4}\tau^3\left\|\frac{\delta F}{\delta \nu}(\nu^n, \mu^n, \cdot)\right\|_{\infty}^3 \leq \frac{L_{h^*}}{4} \tau^3 C_1^3,
\end{equation*}
Similarly, denoting $g^n \coloneqq \frac{\delta h}{\delta \mu}(\mu^n, \cdot),$ for any $n \geq 0,$ and repeating the steps above, we can prove that
\begin{equation*}
    D_{h^*}(g^{n+1}, g^n) - D_{h^*}(g^{n}, g^{n+1}) \leq \frac{L_{h^*}}{4}\|g^{n+1}-g^n\|^3_{\infty}
    =\frac{L_{h^*}}{4}\tau^3\left\|\frac{\delta F}{\delta \mu}(\nu^{n+1}, \mu^n, \cdot)\right\|_{\infty}^3 \leq \frac{L_{h^*}}{4}\tau^3 C_2^3,
\end{equation*}
where $C_2 > 0$ exists due to Assumption \ref{assumption:bddflatF}.

Set $\kappa_1 \coloneqq \frac{1}{4}\left(C_1^3 + C_2^3\right) > 0.$ Then,
\begin{equation}
\label{eq:tau3bound}
D_{h}(\nu^{n}, \nu^{n+1}) - D_{h}(\nu^{n+1}, \nu^{n}) + D_{h}(\mu^{n}, \mu^{n+1}) - D_{h}(\mu^{n+1}, \mu^{n}) \leq \kappa_1 L_{h^*} \tau^3.
\end{equation}
Hence, using \eqref{eq:order-2-bound-bregman}, \eqref{eq:tau3bound} and Lemma \ref{lem:boundedness-from-flat-derivatives}, estimate \eqref{eq:fourth-estimate} becomes 
\begin{multline*}
    \operatorname{NI}\left(\frac{1}{N}\sum_{n=0}^{N-1} \nu^{n+1}, \frac{1}{N}\sum_{n=0}^{N-1} \mu^n\right) \leq \frac{1}{N\tau}\left(\sup_{\nu \in \mathcal{C}} D_h(\nu, \nu^0) + \sup_{\mu \in \mathcal{D}} D_h(\mu, \mu^0)\right)\\ 
    + \frac{1}{2N\tau}\sum_{n=0}^{N-1}\Bigg(\left(D_{h}(\nu^{n}, \nu^{n+1}) - D_{h}(\nu^{n+1}, \nu^{n})\right) + \left(D_{h}(\mu^{n}, \mu^{n+1}) - D_{h}(\mu^{n+1}, \mu^{n})\right)\Bigg)\\ 
    + \frac{L}{2N}\sum_{n=0}^{N-1} \left(D_{h}(\nu^{n+1}, \nu^{n}) + D_{h}(\mu^{n+1}, \mu^{n})\right) + \frac{1}{2N}\left(F\left(\nu^{N}, \mu^{N}\right) - F(\nu^0, \mu^0)\right)\\
    = \frac{1}{N\tau}\left(\sup_{\nu \in \mathcal{C}} D_h(\nu, \nu^0) + \sup_{\mu \in \mathcal{D}} D_h(\mu, \mu^0)\right) + \left(\frac{\kappa_1 L_{h^*}}{2}
    + \frac{16\kappa_2 L}{\alpha}\right) \tau^2 + \frac{|F(\nu^0,\mu^0)| + C_1 +C_2}{N}.
\end{multline*}
Minimizing the right-hand side over $\tau$ amounts to taking $$\tau = \left(\frac{\sup_{\nu \in \mathcal{C}} D_h(\nu, \nu^0) + \sup_{\mu \in \mathcal{D}} D_h(\mu, \mu^0)}{2N}\right)^{1/3}\left(\frac{\kappa_1 L_{h^*}}{2}
    + \frac{16\kappa_2 L}{\alpha}\right)^{-1/3},$$ and since $\frac{1}{N} \leq \frac{1}{N^{2/3}},$ for any $N \geq 1,$ it follows that
\begin{multline*}
    \operatorname{NI}\left(\frac{1}{N}\sum_{n=0}^{N-1} \nu^{n+1}, \frac{1}{N}\sum_{n=0}^{N-1} \mu^n\right) \leq \frac{1}{(2N)^{2/3}}\Bigg(3\left(\sup_{\nu \in \mathcal{C}} D_h(\nu, \nu^0) + \sup_{\mu \in \mathcal{D}} D_h(\mu, \mu^0)\right)^{2/3}\times\\
    \times\left(\frac{\kappa_1 L_{h^*}}{2}
    + \frac{16\kappa_2 L}{\alpha}\right)^{1/3} + 4^{1/3}\left(|F(\nu^0,\mu^0)| + C_1 +C_2\right)\Bigg).
\end{multline*}
\end{proof}

\section{Proofs of additional results}
\label{appendix:additional-results}
In this section, we present the proofs of the additional results of the paper. We start with examples of functions $h$ and the corresponding sets $\mathcal{E}$ such that Assumption \ref{assumption:assump-h} is satisfied. We continue with the proofs of Lemmas \ref{lemma:tau2-bound-primal}, \ref{lem:boundedness-from-flat-derivatives} and \ref{lemma:Bregman-prox-ineq}, which play a key role in proving the main results. Finally, we prove some auxiliary results.

\subsection{Examples satisfying Assumption \ref{assumption:assump-h}}
\label{subsection:examples satifying 1.1}
\begin{example}[Relative entropy]
\label{example:relative-entropy}
    Suppose that $h$ is the relative entropy, i.e., $h(m) \coloneqq \int_{\mathcal{X}} \frac{m(x)}{\pi(x)}\log\frac{m(x)}{\pi(x)} \pi(x)\mathrm{d}x,$ where $m, \pi \in \mathcal{P}_{\lambda}(\mathcal{X})$, i.e., they are absolutely continuous with respect to the Lebesgue measure on $\mathcal{X}$ and $\pi$ is a fixed reference probability measure on $\mathcal{P}_{\lambda}(\mathcal{X}).$ Fix $\beta > 0$ and define $\mathcal{E}_\beta \coloneqq \left\{m \in \mathcal{P}_{\pi}(\mathcal{X}): \left\|\log\frac{m(\cdot)}{\pi(\cdot)}\right\|_{L^\infty(\mathcal{X})} \leq \beta\right\}.$ Let $\mathcal{C} = \mathcal{D} := \mathcal{E}_\beta.$ Then $\mathcal{E}_\beta$ is convex. Moreover, it is proved in \cite[Proposition $2.17$]{kerimkulov2024mirror} that $h$ admits the flat derivative
    \begin{equation}
    \label{eq:flat-h}
    \frac{\delta h}{\delta m}(m,x) = \log \frac{m(x)}{\pi(x)} - h(m),
    \end{equation}
    on $\mathcal{E}_\beta,$ and for all $m, m' \in \mathcal{E_\beta},$ the Bregman divergence $D_h(m', m)$ is in fact the Kullback-Leibler divergence (or relative entropy) $\operatorname{KL}(m', m).$ Therefore, by Pinsker's inequality, that is, $\operatorname{TV}^2(m',m) \leq \frac{1}{2}\operatorname{KL}(m',m),$ Assumption \ref{assumption:assump-h} holds with $\alpha=4$. A related observation appears in \cite[Remark 5]{korba}. A sufficient condition for the relative entropy (or the entropy) to admit a flat derivative in $L^{\infty}$ at $m$ is that there exists $\kappa_0, \kappa_1 > 0$ such that $\kappa_0 \leq \frac{m(\cdot)}{\pi(\cdot)} \leq \kappa_1$ a.e. on $\mathcal{X},$ which is precisely a similar condition to the one defining the class $\mathcal{E_\beta}.$
\end{example}
\begin{example}[{\(\chi^2\)-divergence}]
\label{example:chi-squared}
    Suppose that $h$ is the \(\chi^2\)-divergence, i.e., $h(m) \coloneqq \frac{1}{2}\int_{\mathcal{X}} \left(\frac{m(x)}{\pi(x)}-1\right)^2\pi(x) \mathrm{d}x,$ where $m, \pi \in \mathcal{P}_{\lambda}(\mathcal{X})$. Let $L_\pi^2(\mathcal{X})$ be the set of square integrable functions on $\mathcal{X}$ with respect to $\pi.$ Fix $\eta > 0$ and define $\mathcal{F}_\eta \coloneqq \left\{m \in \mathcal{P}_{\pi}(\mathcal{X}): \left\|\frac{m(\cdot)}{\pi(\cdot)}\right\|_{L_\pi^2(\mathcal{X})} \leq \eta \right\}.$ Let $\mathcal{C}=\mathcal{D} := \mathcal{F}_\eta $. Note that $\mathcal{F}_\eta$ is convex. Moreover, it is proved in \cite[Proposition $2.20$]{kerimkulov2024mirror} that $h$ admits the flat derivative $$\frac{\delta h}{\delta m}(m,x) = \frac{m(x)}{\pi(x)} - \int_{\mathbb R^d} \frac{m(z)}{\pi(z)}m(z)\mathrm{d}z,$$ on $\mathcal{F}_\eta,$ and for all $m, m' \in \mathcal{F}_\eta,$ the Bregman divergence $D_h(m', m)$ is in fact the \(L^2\)-distance $\frac{1}{2}\left\|\frac{m'(\cdot)}{\pi(\cdot)} - \frac{m(\cdot)}{\pi(\cdot)}\right\|^2_{L_\pi^2(\mathcal{X})}.$ Since $\pi \in \mathcal{P}_\lambda(\mathcal{X}),$ the Cauchy-Schwarz inequality implies $\frac{1}{2}\operatorname{TV}^2(m',m) \leq D_h(m', m)$. Thus, Assumption \ref{assumption:assump-h} holds with $\alpha=1$.
\end{example}

\subsection{Proof of Lemma \ref{lemma:tau2-bound-primal}}
\begin{proof}[Proof of Lemma \ref{lemma:tau2-bound-primal}]
We will only prove the lemma for Algorithm \ref{eq:mirror-sim-explicit} since the argument for Algorithm \ref{eq:mirror-alt} is almost identical. From $L_{\nu}$-relative smoothness and the definition of $\nu^{n+1}$ in \eqref{eq:mirror-sim-explicit}, for any $\nu \in \mathcal{C},$ it follows that
    \begin{multline*}
        F(\nu^{n+1}, \mu^n) \leq F(\nu^n, \mu^n) + \int_{\mathcal{X}}\frac{\delta F}{\delta \nu}(\nu^n, \mu^n, x)(\nu^{n+1}-\nu^n)(\mathrm{d}x) +\left(\frac{1}{\tau} + L_{\nu} - \frac{1}{\tau}\right)D_h(\nu^{n+1}, \nu^n)\\
        \leq F(\nu^n, \mu^n) + \int_{\mathcal{X}}\frac{\delta F}{\delta \nu}(\nu^n, \mu^n, x)(\nu-\nu^n)(\mathrm{d}x) + \frac{1}{\tau}D_h(\nu, \nu^n) + \left(L_{\nu} - \frac{1}{\tau}\right)D_h(\nu^{n+1}, \nu^n).
    \end{multline*}
    Setting $\nu = \nu^n,$ we obtain that
    \begin{equation*}
    \label{eq:bd1}
         F(\nu^{n+1}, \mu^n) \leq F(\nu^n, \mu^n) + \left(L_{\nu} - \frac{1}{\tau}\right)D_h(\nu^{n+1}, \nu^n).
    \end{equation*}
    Recall $L\coloneqq \max\{L_{\nu}, L_{\mu}\} >0.$ By assumption, $\tau L \leq \frac{1}{2},$ and so we get
    \begin{align*}
        \frac{1}{2\tau}D_h(\nu^{n+1}, \nu^n) &\leq F(\nu^n, \mu^{n}) - F(\nu^{n+1}, \mu^n)\\
        &= \int_0^1 \int_\mathcal{X}\frac{\delta F}{\delta \nu}(\nu^{n+1}+\varepsilon(\nu^{n}-\nu^{n+1}),\mu^n,x)(\nu^n-\nu^{n+1})(\mathrm{d}x)\mathrm{d}\varepsilon\\
        &\leq 2C_1\operatorname{TV}(\nu^{n+1},\nu^n)\\
    &\leq 2C_1 \sqrt{\frac{2}{\alpha}D_h(\nu^{n+1}, \nu^n)},
\end{align*}
where the penultimate inequality follows from Assumption \ref{assumption:bddflatF} and the last inequality follows from Assumption \ref{assumption:assump-h}. Hence, since $D_h(\nu^{n+1}, \nu^n) \geq 0,$ for all $n \geq 0,$ we obtain that
\begin{equation*}
    D_h(\nu^{n+1}, \nu^n) \leq \frac{32 \tau^2C_1^2}{\alpha}.
\end{equation*}    
    From $L_{\mu}$-relative smoothness and the definition of $\mu^{n+1}$ in \eqref{eq:mirror-sim-explicit}, for any $\mu \in \mathcal{D},$ it follows that
    \begin{multline*}
        F(\nu^{n}, \mu^{n+1}) \geq F(\nu^n, \mu^n) + \int_{\mathcal{X}}\frac{\delta F}{\delta \mu}(\nu^n, \mu^n, y)(\mu^{n+1}-\mu^n)(\mathrm{d}y) - \left(\frac{1}{\tau} + L_{\mu} - \frac{1}{\tau}\right)D_h(\mu^{n+1}, \mu^n)\\
            \geq F(\nu^n, \mu^n) + \int_{\mathcal{X}}\frac{\delta F}{\delta \mu}(\nu^n, \mu^n, y)(\mu-\mu^n)(\mathrm{d}y) - \frac{1}{\tau}D_h(\mu, \mu^n) - \left(L_{\mu} - \frac{1}{\tau}\right)D_h(\mu^{n+1}, \mu^n).
    \end{multline*}
    Setting $\mu = \mu^n,$ we obtain that
    \begin{equation*}
    \label{eq:bd2}
         F(\nu^{n}, \mu^{n+1}) \geq F(\nu^n, \mu^n) - \left(L_{\mu} - \frac{1}{\tau}\right)D_h(\mu^{n+1}, \mu^n).
    \end{equation*}
    Using again the assumption $\tau L \leq \frac{1}{2},$ we get 
    \begin{equation*}
        \frac{1}{2\tau}D_h(\mu^{n+1}, \mu^n) \leq F(\nu^n, \mu^{n+1}) - F(\nu^{n}, \mu^n) 
    \leq 2C_2 \sqrt{\frac{2}{\alpha}D_h(\mu^{n+1}, \mu^n)},
\end{equation*}
where the last inequality follows from Assumptions \ref{assumption:bddflatF} and \ref{assumption:assump-h}. Hence, since $D_h(\mu^{n+1}, \mu^n) \geq 0,$ for all $n \geq 0,$ we obtain that
\begin{equation*}
    D_h(\mu^{n+1}, \mu^n) \leq \frac{32 \tau^2C_2^2}{\alpha}.
\end{equation*}    
\end{proof}
\subsection{Proof of Lemma \ref{lem:boundedness-from-flat-derivatives}}
\begin{proof}[Proof of Lemma \ref{lem:boundedness-from-flat-derivatives}]
Fix $\mu\in\mathcal{D}$ and $\nu,\nu'\in\mathcal{C}$, and define $\nu_\lambda := (1-\lambda)\nu+\lambda\nu'$, $\lambda\in[0,1]$,
which stays in $\mathcal{C}$ by convexity. By Remark G.2 applied to $\lambda\mapsto F(\nu_\lambda,\mu)$,
\[
F(\nu',\mu)-F(\nu,\mu)
=\int_0^1\int_{\mathcal{X}} \frac{\delta F}{\delta \nu}(\nu_\lambda,\mu,x)(\nu'-\nu)(\mathrm{d}x)\mathrm{d}\lambda.
\]
Using the dual characterization of total variation (i.e., $|\int \phi\,\mathrm{d}(\nu'-\nu)|\le 2\|\phi\|_\infty \operatorname{TV}(\nu',\nu)$ for bounded $\phi$), and Assumption \ref{assumption:bddflatF}, we obtain
\[
|F(\nu',\mu)-F(\nu,\mu)|
\le 2C_1 \operatorname{TV}(\nu',\nu).
\]
Similarly, fixing $\nu'\in\mathcal{C}$ and defining $\mu_\lambda:=(1-\lambda)\mu+\lambda\mu'\in\mathcal{D}$, we get $|F(\nu',\mu')-F(\nu',\mu)|\le 2C_2\operatorname{TV}(\mu',\mu)$.
Combining the two bounds with the triangle inequality yields
\[
|F(\nu',\mu')-F(\nu,\mu)|
\le |F(\nu',\mu')-F(\nu',\mu)| + |F(\nu',\mu)-F(\nu,\mu)|
\le 2C_2\operatorname{TV}(\mu',\mu)+2C_1\operatorname{TV}(\nu',\nu),
\]
proving that $F$ is $\operatorname{TV}$-Lipschitz. The uniform bound on $F$ follows by applying this estimate with $(\nu',\mu')=(\nu,\mu)$ and
$(\nu,\mu)=(\nu^0,\mu^0)$, noting that $\operatorname{TV}(\cdot,\cdot)\le 1$ on probability measures and taking the supremum over $\mathcal{C}\times\mathcal{D}$.
\end{proof}

\subsection{Proof of Lemma \ref{lemma:Bregman-prox-ineq}}
\begin{lemma}[Three-point inequality]
\label{lemma:Bregman-prox-ineq}
Let $\mathcal{E} \subset \mathcal{P}(\mathcal{X})$ be convex. Let Assumption \ref{assumption:assump-h} hold. Let $G:\mathcal{E} \to \mathbb R$ be convex and $G \in \mathfrak{C}^1(\mathcal{E}).$ For all $\mu \in \mathcal{E},$ suppose that there exists $\Bar{\nu} \in \mathcal{E}$ such that
\begin{equation*}
    \Bar{\nu} \in \argmin_{\nu \in \mathcal{E}} \{G(\nu) + D_h(\nu,\mu)\}.
\end{equation*}
Then, for any $\nu \in \mathcal{E},$ we have 
\begin{equation*}
    G(\nu) + D_h(\nu,\mu) \geq G(\Bar{\nu}) + D_h(\Bar{\nu},\mu) + D_h(\nu, \Bar{\nu}).
\end{equation*}
\end{lemma}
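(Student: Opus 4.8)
The plan is to mimic the finite-dimensional three-point lemma for Bregman proximal steps, carried out in the flat-derivative calculus on $\mathcal{P}(\mathcal{X})$ as in \cite[Lemma 3]{korba}. Write $\phi(\cdot):=G(\cdot)+D_h(\cdot,\mu)$. Since, by Definition \ref{def:bregman-div}, $D_h(\cdot,\mu)$ is the sum of $h$ and a functional affine in its first argument, it is convex on $\mathcal{E}$ by Assumption \ref{assumption:assump-h}; hence so is $\phi$. The first step is to convert the minimality of $\Bar{\nu}$ into a first-order variational inequality. Fixing $\nu\in\mathcal{E}$ and using convexity of $\mathcal{E}$, the curve $\nu_\lambda:=(1-\lambda)\Bar{\nu}+\lambda\nu$, $\lambda\in[0,1]$, stays in $\mathcal{E}$, and $\lambda\mapsto\phi(\nu_\lambda)$ is minimized over $[0,1]$ at $\lambda=0$; dividing $\phi(\nu_\lambda)-\phi(\Bar{\nu})\ge 0$ by $\lambda$ and letting $\lambda\downarrow 0$, I will invoke the defining integral representation of the flat derivatives of $G$ and of $h$ (Definition \ref{def:fderivative}), noting that the flat derivative of $\nu\mapsto D_h(\nu,\mu)$ is $x\mapsto\frac{\delta h}{\delta\nu}(\nu,x)-\frac{\delta h}{\delta\nu}(\mu,x)$ up to an additive constant, to obtain
\begin{equation}\label{eq:3pt-VI}
\int_{\mathcal{X}}\left(\frac{\delta G}{\delta\nu}(\Bar{\nu},x)+\frac{\delta h}{\delta\nu}(\Bar{\nu},x)-\frac{\delta h}{\delta\nu}(\mu,x)\right)(\nu-\Bar{\nu})(\mathrm{d}x)\ \ge\ 0,
\end{equation}
the additive constant being harmless since $\nu-\Bar{\nu}$ has total mass zero.

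Next I will expand all three Bregman terms in the claimed inequality via Definition \ref{def:bregman-div}. Cancelling the common summands $h(\nu)$ and $-h(\mu)$ appearing on both sides reduces the statement $G(\nu)+D_h(\nu,\mu)\ge G(\Bar{\nu})+D_h(\Bar{\nu},\mu)+D_h(\nu,\Bar{\nu})$ to
\begin{equation*}
G(\nu)-G(\Bar{\nu})\ \ge\ \int_{\mathcal{X}}\left(\frac{\delta h}{\delta\nu}(\mu,x)-\frac{\delta h}{\delta\nu}(\Bar{\nu},x)\right)(\nu-\Bar{\nu})(\mathrm{d}x).
\end{equation*}
Then, using convexity of $G$ on $\mathcal{E}$ upgraded to the subgradient inequality exactly as done for $h$ right after Assumption \ref{assumption:assump-h} (via \cite[Lemma 4.1]{10.1214/20-AIHP1140}), namely $G(\nu)-G(\Bar{\nu})\ge\int_{\mathcal{X}}\frac{\delta G}{\delta\nu}(\Bar{\nu},x)(\nu-\Bar{\nu})(\mathrm{d}x)$, it remains only to verify $\int_{\mathcal{X}}\frac{\delta G}{\delta\nu}(\Bar{\nu},x)(\nu-\Bar{\nu})(\mathrm{d}x)\ge\int_{\mathcal{X}}\bigl(\frac{\delta h}{\delta\nu}(\mu,x)-\frac{\delta h}{\delta\nu}(\Bar{\nu},x)\bigr)(\nu-\Bar{\nu})(\mathrm{d}x)$, which is precisely \eqref{eq:3pt-VI} after rearranging. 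This closes the argument.

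The hard part will be the limit $\lambda\downarrow 0$ behind \eqref{eq:3pt-VI}: one has to differentiate $\lambda\mapsto\phi(\nu_\lambda)$ at $0$, i.e. exchange the limit with the integral in the definition of the flat derivative. This is exactly where the regularity packaged into Definition \ref{def:fderivative} (joint measurability and continuity of $\frac{\delta G}{\delta\nu}$ and $\frac{\delta h}{\delta\nu}$ in the measure variable) together with a uniform domination bound on $\mathcal{E}$ is needed; such bounds hold in the examples of interest — e.g. on the set $\mathcal{E}$ of Example \ref{example:relative-entropy} one has $\frac{\delta h}{\delta\nu}(\nu,\cdot)=\log(\nu/\pi)$ bounded. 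The remaining steps are routine bookkeeping with Definition \ref{def:bregman-div} and the convexity inequalities.
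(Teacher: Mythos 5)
Your proof is correct and follows essentially the same route as the paper's: compute the flat derivative of $\nu\mapsto D_h(\nu,\mu)$, invoke first-order optimality of $\bar{\nu}$, use the subgradient inequality for the convex $G$, and close with the Bregman three-point algebra. The only (harmless) difference is that you encode optimality as the variational inequality $\int_{\mathcal{X}}\frac{\delta}{\delta\nu}\bigl(G+D_h(\cdot,\mu)\bigr)(\bar{\nu},x)\,(\nu-\bar{\nu})(\mathrm{d}x)\ge 0$, whereas the paper uses the first-order condition $\frac{\delta}{\delta\nu}\bigl(G+D_h(\cdot,\mu)\bigr)(\bar{\nu},\cdot)=\text{constant}$ and phrases the algebra via the Bregman divergence of the composite objective; your inequality form is, if anything, the more robust one for minimization constrained to the convex set $\mathcal{E}$.
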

\begin{proof}
Fix $\nu \in  \mathcal{E}$. Define $\mathcal G: \mathcal{E} \to \mathbb R$ by $\mathcal G(\nu) = G(\nu) + D_h(\nu,\mu)$ for all $\mu \in \mathcal{E}$. Since $G$ and $h$ are convex and flat differentiable, $\mathcal G$ is flat differentiable. As $\bar \nu$ minimises $\mathcal G$ over $\mathcal{E}$,
\[
\int_{\mathcal{X}} \frac{\delta \mathcal G}{\delta \nu}(\bar \nu,x)(\nu- \bar \nu)(\mathrm{d}x) 
=\lim_{\varepsilon \searrow  0}\frac{ \mathcal G (\bar{\nu}+\varepsilon (\nu-\bar{\nu} ))- \mathcal G(\bar{\nu})}{\varepsilon}\ge 0,
\]
which implies that 
\begin{align*}
D_{\mathcal G}(\nu, \bar \nu) = \mathcal G(\nu) -\mathcal G (\bar \nu) -  \int \frac{\delta \mathcal G}{ \delta m} (\bar \nu ,x)(\nu - \bar \nu)(\mathrm{d}x) \le   \mathcal G(\nu) - \mathcal G (\bar \nu ).
\end{align*}
Adding $\mathcal G (\bar \nu)$ to both sides of the inequality and 
using the definition of $\mathcal G$ gives
\begin{align*}
G(\nu)+D_h(\nu,\mu) 
&\ge 
D_{\mathcal G}(\nu, \bar \nu)  +G(\bar \nu) +D_h( \bar \nu ,\mu) 
\\
&=
D_{  G}(\nu, \bar \nu) +D_{ D_h(\cdot, \mu)}(\nu, \bar \nu)  +G(\bar \nu) +D_h( \bar \nu ,\mu) 
\\
&\ge 
D_{  h}(\nu, \bar \nu)  +G(\bar \nu) +D_h( \bar \nu, \mu),
\end{align*}
where the equality follows from the linearity of the Bregman divergence and the last inequality follows since $D_{G}(\nu, \Bar{\nu}) \geq 0$ by convexity of $G$ and the identity $D_{ D_h(\cdot, \mu)}(\nu, \bar \nu) = D_{  h}(\nu, \bar \nu)$.
\end{proof} 

\subsection{Proofs of auxiliary results}
In this subsection, we start by proving the convexity characterization $h$ via its flat derivative.
\begin{lemma}[Strong convexity of $h$]
\label{lemma:strong-convexity-h}
	Let Assumption \ref{assumption:assump-h} hold. Then there exists $\alpha > 0 $ such that for any $\nu, \nu' \in \mathcal{E},$
	\begin{equation*}
		 h(\nu') - h(\nu) \geq \int_{\mathcal{X}} \frac{\delta h}{\delta \nu}(\nu,x)(\nu'-\nu)(\mathrm{d}x) + \frac{\alpha}{2}\operatorname{TV}^2(\nu',\nu).
	\end{equation*} 
\end{lemma}
\begin{proof}
		Let $\varepsilon \in [0,1]$ and $\nu^\varepsilon = \nu + \varepsilon(\nu'-\nu).$ Since $h$ is strongly convex, and by Definition \ref{def:fderivative},
		\begin{align*}
			\varepsilon\left(h(\nu') - h(\nu)\right) - \frac{\alpha}{2}\varepsilon(1-\varepsilon)\operatorname{TV}^2(\nu',\nu)&\geq h(\nu^\varepsilon ) - h(\nu) =\varepsilon\int_0^1 \int_{\mathcal{X}} \frac{\delta h}{\delta \nu}(\nu^{s\varepsilon},x)(\nu'-\nu)(\mathrm{d}x)\mathrm{d}s.
		\end{align*}
		Dividing by $\varepsilon$ and passing to the limit $\varepsilon \searrow 0$ via dominated convergence theorem gives the conclusion since $\frac{\delta h}{\delta \nu}$ is bounded and continuous.
\end{proof}
Following \cite[Lemma 2]{korba}, we prove that the mirror updates in Algorithms \ref{eq:mirror-sim-explicit} and \ref{eq:mirror-alt} satisfy iterative schemes on the dual space. Before presenting the proof, we would like to emphasize the difference between our approach on deriving iterative schemes on the dual space and the approach requiring Legendre-type (essentially smooth) mirror maps. In our setting, we explicitly assume that the Bregman potential $h$ is flat differentiable on the entire domain $\mathcal{E}_\beta$ (Assumption \ref{assumption:assump-h}), that needs to be specified on a case-by-case basis. This guarantees that $h$ is differentiable everywhere on $\mathcal{E}_\beta$ in all feasible directions.

In contrast, the Legendre property ensures that minimizers remain in the interior of the domain, where $h$ is differentiable, thus avoiding boundary issues. For instance, \cite[Definition 1 (Legendre functions)]{Bauschke2017ADL} only requires $h$ to be differentiable on the interior of its domain. Consequently, the additional condition that $h$ is a Legendre-type must be included to handle potential boundary complications. This is also reflected in their definition of the Bregman divergence (their Eq. (5)).

Our framework differs in that we assume $h$ is flat differentiable on the full domain $\mathcal{E}_\beta$, not only on its interior. Hence, $D_h$ is well defined on the entire domain without restricting the measure at which $\frac{\delta h}{\delta \nu}$ is evaluated to lie in the interior. The key requirement for taking flat derivatives and writing optimality conditions on the dual space is therefore to specify domain $\mathcal{E}_\beta$ on which $h$ is everywhere differentiable (see Examples \ref{example:relative-entropy} and \ref{example:chi-squared}). 

\begin{proposition}[MDA dual iteration]
\label{prop:foc}
    For each $n \geq 0,$ let $F(\cdot,\mu^n) \in \mathfrak{C}^1(\mathcal{C})$, $F(\nu^n,\cdot) \in \mathfrak{C}^1(\mathcal{D})$. Moreover, let Assumption \ref{assumption:assump-h} hold. Then the minimizer-maximizer pair $(\nu^{n+1},\mu^{n+1}) \in \mathcal{C} \times \mathcal{D}$ of each one of Algorithms \ref{eq:mirror-sim-explicit} and \ref{eq:mirror-alt} satisfies the corresponding dual iterative update
    \begin{equation*}
\begin{cases}
    \frac{\delta h}{\delta \nu}(\nu^{n+1}, \cdot) - \frac{\delta h}{\delta \nu}(\nu^{n}, \cdot) = -\tau \frac{\delta F}{\delta \nu}(\nu^{n}, \mu^n, \cdot) + C_{n,1},\quad \nu^{n+1}-\text{a.e.},\\
    \frac{\delta h}{\delta \mu}(\mu^{n+1}, \cdot) - \frac{\delta h}{\delta \mu}(\mu^{n}, \cdot) = \tau \frac{\delta F}{\delta \mu}(\nu^n, \mu^n, \cdot) + C_{n,2}, \quad \mu^{n+1}-\text{a.e.},
\end{cases}
    \end{equation*}
    \begin{equation*}
\begin{cases}
    \frac{\delta h}{\delta \nu}(\nu^{n+1}, \cdot) - \frac{\delta h}{\delta \nu}(\nu^{n}, \cdot) = -\tau \frac{\delta F}{\delta \nu}(\nu^{n}, \mu^n, \cdot) + C_{n,3},\quad \nu^{n+1}-\text{a.e.},\\
    \frac{\delta h}{\delta \mu}(\mu^{n+1}, \cdot) - \frac{\delta h}{\delta \mu}(\mu^{n}, \cdot) = \tau \frac{\delta F}{\delta \mu}(\nu^{n+1}, \mu^n, \cdot) + C_{n,4}, \quad \mu^{n+1}-\text{a.e.},
\end{cases}
    \end{equation*}
    where $C_{n,1}, C_{n,2}, C_{n,3}, C_{n,4} \in \mathbb R.$
\end{proposition}
\begin{proof}
    We present the proof only for Algorithm \ref{eq:mirror-sim-explicit}, as the proof for Algorithm \ref{eq:mirror-alt} is identical. For convenience, define
    \begin{equation*}
        G(\nu) \coloneqq \int_{\mathcal{X}} \frac{\delta F}{\delta \nu}(\nu^{n}, \mu^n, x)(\nu-\nu^n)(\mathrm{d}x), \quad \nu \in \mathcal{C}.
    \end{equation*}
    Since $\nu^{n+1}$ is the minimizer in Algorithm \ref{eq:mirror-sim-explicit}, we have
    \begin{equation*}
        G(\nu^{n+1}) + \frac{1}{\tau} D_h(\nu^{n+1},\nu^n) \leq G(\nu) + \frac{1}{\tau} D_h(\nu,\nu^n), \quad \forall \nu \in \mathcal{C}.
    \end{equation*}
    Because $\mathcal{C}$ is convex, fix any $\Tilde{\nu} \in \mathcal{C}$ and take $\nu = \nu^{n+1} +\varepsilon(\Tilde{\nu} - \nu^{n+1}) \in \mathcal{C}.$ Then
    \begin{align*}
        G(\nu^{n+1}) + \frac{1}{\tau} D_h(\nu^{n+1},\nu^n) \leq G(\nu^{n+1} +\varepsilon(\Tilde{\nu} - \nu^{n+1})) + \frac{1}{\tau} D_h(\nu^{n+1} +\varepsilon(\Tilde{\nu} - \nu^{n+1}),\nu^n),
    \end{align*}
    which, using linearity of $G$ and rearranging, becomes
    \begin{equation*}
        \varepsilon(G(\Tilde{\nu})-G(\nu^{n+1})) + \frac{1}{\tau}\left(D_h(\nu^{n+1} +\varepsilon(\Tilde{\nu} - \nu^{n+1}),\nu^n) - D_h(\nu^{n+1},\nu^n)\right) \geq 0.
    \end{equation*}
    Dividing by $\varepsilon$ and letting $\varepsilon \searrow 0,$ Definition \ref{def:fderivative} yields
    \begin{equation*}
        G(\Tilde{\nu})-G(\nu^{n+1}) + \frac{1}{\tau}\int_\mathcal{X} \frac{\delta D_h(\cdot,\nu^n)}{\delta \nu}(\nu^{n+1},x)(\Tilde{\nu} - \nu^{n+1})(\mathrm{d}x) \geq 0.
    \end{equation*}
   By the definition of Bregman divergence and flat derivative,
    \begin{equation*}
        \frac{\delta D_h(\cdot,\nu^n)}{\delta \nu}(\nu^{n+1},x) = \frac{\delta h}{\delta \nu}(\nu^{n+1}, x) - \frac{\delta h}{\delta \nu}(\nu^{n}, x).
    \end{equation*}
    Hence,
    \begin{equation*}
        \int_\mathcal{X}\left(\frac{\delta F}{\delta \nu}(\nu^{n}, \mu^n, x) + \frac{1}{\tau}\left(\frac{\delta h}{\delta \nu}(\nu^{n+1}, x) - \frac{\delta h}{\delta \nu}(\nu^{n}, x)\right)\right)(\Tilde{\nu}-\nu^{n+1})(\mathrm{d}x) \geq 0.
    \end{equation*}
    Since $\Tilde{\nu}$ is arbitrary, we conclude that
    \begin{equation*}
        \frac{\delta F}{\delta \nu}(\nu^{n}, \mu^n, \cdot) + \frac{1}{\tau}\left(\frac{\delta h}{\delta \nu}(\nu^{n+1}, \cdot) - \frac{\delta h}{\delta \nu}(\nu^{n}, \cdot)\right) = \text{constant}, \quad \nu^{n+1}\text{-a.e.}
    \end{equation*}
    An analogous argument gives the optimality condition for the maximizer $\mu^{n+1}.$
\end{proof}
We show that, under Assumption \ref{def: def-F-conv-conc} and \ref{def:relative-smoothness}, the second-order flat derivatives $\frac{\delta^2 F}{\delta \nu^2}, -\frac{\delta^2 F}{\delta \mu^2}$ are non-negative and bounded above by $\frac{\delta^2 h}{\delta \nu^2}, \frac{\delta^2 h}{\delta \mu^2}$ multiplied by the respective smoothness constants.
\begin{lemma}[Uniform boundedness of second order flat derivatives of $F$]
\label{lemma:bound-hessian}
    Let Assumption \ref{def: def-F-conv-conc} and \ref{def:relative-smoothness} hold. Suppose that $F(\cdot, \mu) \in \mathfrak{C}^2(\mathcal{C})$, $F(\nu, \cdot) \in \mathfrak{C}^2(\mathcal{D})$ and $h \in \mathfrak{C}^2(\mathcal{E})$ (cf. \eqref{def:2FlatDerivative}). Then,
    \begin{align*}
     0 &\leq \int_0^1 \int_{\mathcal{X}}\int_0^{\varepsilon} \int_{\mathcal{X}} \frac{\delta^2 F}{\delta \nu^2}\left(\nu+\eta(\nu'-\nu), \mu, x, x'\right)(\nu'-\nu)(\mathrm{d}x')\mathrm{d}\eta(\nu'-\nu)(\mathrm{d}x)\mathrm{d}\varepsilon\\ 
     &\leq L_{\nu}\int_0^1 \int_{\mathcal{X}}\int_0^{\varepsilon} \int_{\mathcal{X}}\frac{\delta^2 h}{\delta \nu^2}\left(\nu+\eta(\nu'-\nu), x, x'\right)(\nu'-\nu)(\mathrm{d}x')\mathrm{d}\eta(\nu'-\nu)(\mathrm{d}x)\mathrm{d}\varepsilon,
\end{align*}
\begin{align*}
    0 &\leq -\int_0^1 \int_{\mathcal{X}}\int_0^{\varepsilon} \int_{\mathcal{X}} \frac{\delta^2 F}{\delta \mu^2}\left(\nu, \mu+\eta(\mu'-\mu), y, y'\right)(\mu'-\mu)(\mathrm{d}y')\mathrm{d}\eta(\mu'-\mu)(\mathrm{d}y)\mathrm{d}\varepsilon\\
    &\leq L_{\mu}\int_0^1 \int_{\mathcal{X}}\int_0^{\varepsilon} \int_{\mathcal{X}}\frac{\delta^2 h}{\delta \mu^2}\left(\mu+\eta(\mu'-\mu), y, y'\right)(\mu'-\mu)(\mathrm{d}y')\mathrm{d}\eta(\mu'-\mu)(\mathrm{d}y)\mathrm{d}\varepsilon.
    \end{align*}
\end{lemma}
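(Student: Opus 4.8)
The plan is to recognize each four-fold iterated integral appearing in the statement as the second-order Taylor remainder of a scalar function of a measure, obtained by applying the integral representation of the flat derivative twice, and then to read off the two inequalities directly from convexity/concavity (Assumption \ref{def: def-F-conv-conc}) and relative smoothness (Assumption \ref{def:relative-smoothness}).

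The first step is to establish, for a map $G$ on a convex set $\mathcal{K}\in\{\mathcal{C},\mathcal{D},\mathcal{E}\}$ that admits first- and second-order flat derivatives, and for $\nu,\nu'\in\mathcal{K}$, the identity
\begin{multline*}
G(\nu')-G(\nu)-\int_{\mathcal{X}}\frac{\delta G}{\delta\nu}(\nu,x)(\nu'-\nu)(\mathrm{d}x)\\
=\int_0^1\int_{\mathcal{X}}\int_0^{\varepsilon}\int_{\mathcal{X}}\frac{\delta^2 G}{\delta\nu^2}\big(\nu+\eta(\nu'-\nu),x,x'\big)(\nu'-\nu)(\mathrm{d}x')\,\mathrm{d}\eta\,(\nu'-\nu)(\mathrm{d}x)\,\mathrm{d}\varepsilon.
\end{multline*}
To obtain it, write $m_\varepsilon\coloneqq\nu+\varepsilon(\nu'-\nu)$, which lies in $\mathcal{K}$ by convexity, apply the integral representation of the flat derivative (Definition \ref{def:fderivative}) to $\nu''\mapsto\frac{\delta G}{\delta\nu}(\nu'',x)$ along the segment from $\nu$ to $m_\varepsilon$ to get, after the substitution $\eta=t\varepsilon$, that $\frac{\delta G}{\delta\nu}(m_\varepsilon,x)-\frac{\delta G}{\delta\nu}(\nu,x)=\int_0^{\varepsilon}\int_{\mathcal{X}}\frac{\delta^2 G}{\delta\nu^2}(m_\eta,x,x')(\nu'-\nu)(\mathrm{d}x')\,\mathrm{d}\eta$, then integrate against $(\nu'-\nu)(\mathrm{d}x)$ and over $\varepsilon\in[0,1]$, and finally use $\int_0^1\int_{\mathcal{X}}\frac{\delta G}{\delta\nu}(m_\varepsilon,x)(\nu'-\nu)(\mathrm{d}x)\,\mathrm{d}\varepsilon=G(\nu')-G(\nu)$.

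With this identity the lemma is immediate. Applying it to $G(\cdot)=F(\cdot,\mu)$ on $\mathcal{C}$ identifies the first four-fold integral with $F(\nu',\mu)-F(\nu,\mu)-\int_{\mathcal{X}}\frac{\delta F}{\delta\nu}(\nu,\mu,x)(\nu'-\nu)(\mathrm{d}x)$, and applying it to $G=h$ on $\mathcal{E}$ shows the four-fold integral of $\frac{\delta^2 h}{\delta\nu^2}$ equals $D_h(\nu',\nu)$ (cf.\ Definition \ref{def:bregman-div}). The left inequality is then exactly \eqref{eq:convexF} and the right inequality is exactly the first inequality of Assumption \ref{def:relative-smoothness}. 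The $\mu$-statement follows symmetrically: applying the identity to $G(\cdot)=F(\nu,\cdot)$ on $\mathcal{D}$, concavity \eqref{eq:concaveF} gives $0\leq-(\,\cdot\,)$ and the second inequality of Assumption \ref{def:relative-smoothness} gives $-(\,\cdot\,)\leq L_{\mu}D_h(\mu',\mu)$, with $D_h(\mu',\mu)$ rewritten as the four-fold integral of $\frac{\delta^2 h}{\delta\mu^2}$ via the identity applied to $h$ (using $\mathcal{D}\subseteq\mathcal{E}$).

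The main technical obstacle is the rigorous justification of the second-order expansion itself — chiefly the differentiation-under-the-integral step $\frac{\mathrm{d}}{\mathrm{d}\varepsilon}\frac{\delta G}{\delta\nu}(m_\varepsilon,x)=\int_{\mathcal{X}}\frac{\delta^2 G}{\delta\nu^2}(m_\varepsilon,x,x')(\nu'-\nu)(\mathrm{d}x')$ and the applications of Fubini's theorem needed to interchange the $\mathrm{d}\varepsilon$, $\mathrm{d}\eta$ and measure integrations. This is where the precise definition \eqref{def:2FlatDerivative} of the second-order flat derivative enters: it supplies joint measurability and a uniform (in the base point along the segment $\{\nu+\eta(\nu'-\nu):\eta\in[0,1]\}$) integrability bound for $\frac{\delta^2 G}{\delta\nu^2}$ against the finite signed measure $(\nu'-\nu)\otimes(\nu'-\nu)$, which holds for $G=F(\cdot,\mu)$, $G=F(\nu,\cdot)$ and $G=h$ by the standing hypotheses. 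Once this is in place the rest of the argument is bookkeeping.
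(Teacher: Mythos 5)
Your proposal is correct and follows essentially the same route as the paper: combine convexity \eqref{eq:convexF} and relative smoothness (Assumption \ref{def:relative-smoothness}) to sandwich the first-order remainder $F(\nu',\mu)-F(\nu,\mu)-\int_{\mathcal{X}}\frac{\delta F}{\delta\nu}(\nu,\mu,x)(\nu'-\nu)(\mathrm{d}x)$ between $0$ and $L_{\nu}D_h(\nu',\nu)$, then rewrite both the remainder and $D_h(\nu',\nu)$ as the four-fold iterated integrals via the second-order flat derivative, and argue symmetrically in $\mu$. The only difference is presentational: you spell out the second-order Taylor identity (and flag the Fubini/differentiation-under-the-integral justification) that the paper invokes implicitly when it passes from \eqref{eq:hessianbound} to the iterated-integral form.
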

\begin{proof}
    We observe that combining relative smoothness and convexity for $\nu \mapsto F(\nu, \mu)$ gives that for some $ L_{\nu} > 0,$ any $\nu, \nu' \in \mathcal{C}$ and any $\mu, \mu' \in \mathcal{D},$ we have
\begin{equation}
\label{eq:hessianbound}
    0 \leq F(\nu', \mu) - F(\nu, \mu) - \int_{\mathcal{X}} \frac{\delta F}{\delta \nu}(\nu,\mu,x) (\nu'-\nu)(\mathrm{d}x) \leq L_{\nu}D_h(\nu', \nu).
\end{equation}
Since $\nu \mapsto F(\nu, \mu),$ $\mu \mapsto F(\nu, \mu),$ and $h$ admit second-order flat derivative (cf. \eqref{def:2FlatDerivative}) on $\mathcal{C}, \mathcal{D}$ and $\mathcal{E},$ respectively, from \eqref{eq:hessianbound}, we obtain 
\begin{align*}
    0 &\leq \int_0^1 \int_{\mathcal{X}}\int_0^{\varepsilon} \int_{\mathcal{X}} \frac{\delta^2 F}{\delta \nu^2}\left(\nu+\eta(\nu'-\nu), \mu, x, x'\right)(\nu'-\nu)(\mathrm{d}x')\mathrm{d}\eta(\nu'-\nu)(\mathrm{d}x)\mathrm{d}\varepsilon\\ 
    &\leq L_{\nu}\int_0^1 \int_{\mathcal{X}}\int_0^{\varepsilon} \int_{\mathcal{X}}\frac{\delta^2 h}{\delta \nu^2}\left(\nu+\eta(\nu'-\nu), x, x'\right)(\nu'-\nu)(\mathrm{d}x')\mathrm{d}\eta(\nu'-\nu)(\mathrm{d}x)\mathrm{d}\varepsilon.
\end{align*}
The analogous inequalities are similarly obtained for relative smoothness and relative concavity.
\end{proof}
When $F$ is strongly-convex-strongly-concave relative to $h$ and Assumption \ref{assumption:assump-h} holds, it can be shown that $(\nu^*, \mu^*)$ is the unique MNE of \eqref{eq:game} (see the proof of \cite[Lemma 6]{lascu2023entropic}). Moreover, based on relative convexity-concavity of $F,$ we prove in Lemma \ref{lemma:quad-growth-NI} that the NI error satisfies a type of ``quadratic growth'' inequality relative to $h.$
\begin{assumption}[Relative convexity-concavity]
\label{def: def-F-strong-conv-conc}
    Assume $F$ is $(\ell_{\nu}, \ell_{\mu})$-strongly convex-concave relative to $h,$ i.e., there exist $\ell_{\nu}, \ell_{\mu} > 0$ such that for any $\nu, \nu' \in \mathcal{C}$ and $\mu, \mu' \in \mathcal{D},$ we have
		\begin{equation}
        \label{eq:strong-convexF}
		D_{F(\cdot,\mu)}(\nu',\nu) = F(\nu', \mu) - F(\nu, \mu) - \int_{\mathcal{X}} \frac{\delta F}{\delta \nu}(\nu,\mu,x) (\nu'-\nu)(\mathrm{d}x) \geq \ell_{\nu}D_h(\nu',\nu), 
		\end{equation}
        \begin{equation}
        \label{eq:strong-concaveF}
        D_{F(\nu, \cdot)}(\mu',\mu) = F(\nu, \mu') - F(\nu, \mu) - \int_{\mathcal{X}} \frac{\delta F}{\delta \mu}(\nu,\mu,y) (\mu'-\mu)(\mathrm{d}y) \leq - \ell_{\mu}D_h(\mu',\mu).
        \end{equation}
\end{assumption}
\begin{lemma}[``Quadratic growth'' of NI error relative to $h$]
\label{lemma:quad-growth-NI}
    Suppose that Assumption \ref{assumption:assump-h} and \ref{def: def-F-strong-conv-conc} hold. Then, for all $(\nu, \mu) \in \mathcal{C} \times \mathcal{D},$ it holds that
    \begin{equation*}
        \operatorname{NI}\left(\nu, \mu\right) \geq \ell\left(D_h(\nu, \nu^*) + D_h(\mu, \mu^*)\right),
    \end{equation*}
    where $\ell \coloneqq \min\{\ell_{\nu}, \ell_{\mu}\}.$
\end{lemma}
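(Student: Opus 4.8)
The plan is to lower-bound $\operatorname{NI}(\nu,\mu)$ by a duality gap centred at the equilibrium and then extract the quadratic term from relative strong convexity-concavity. Let $(\nu^*,\mu^*)$ denote an MNE of \eqref{eq:game}. Since $\mu^*\in\mathcal{D}$ and $\nu^*\in\mathcal{C}$, the definition of the NI error immediately gives
\[
\operatorname{NI}(\nu,\mu)=\max_{\mu'\in\mathcal{D}}F(\nu,\mu')-\min_{\nu'\in\mathcal{C}}F(\nu',\mu)\geq F(\nu,\mu^*)-F(\nu^*,\mu).
\]
Writing the right-hand side as $\big(F(\nu,\mu^*)-F(\nu^*,\mu^*)\big)+\big(F(\nu^*,\mu^*)-F(\nu^*,\mu)\big)$, I would apply \eqref{eq:strong-convexF} at the base point $(\nu^*,\mu^*)$ with $\nu'=\nu$ to the first bracket, and \eqref{eq:strong-concaveF} at $(\nu^*,\mu^*)$ with $\mu'=\mu$ (rearranged) to the second. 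Adding the two estimates yields
\begin{multline*}
F(\nu,\mu^*)-F(\nu^*,\mu)\geq \int_{\mathcal{X}}\frac{\delta F}{\delta\nu}(\nu^*,\mu^*,x)(\nu-\nu^*)(\mathrm{d}x)-\int_{\mathcal{X}}\frac{\delta F}{\delta\mu}(\nu^*,\mu^*,y)(\mu-\mu^*)(\mathrm{d}y)\\
+\ell_\nu D_h(\nu,\nu^*)+\ell_\mu D_h(\mu,\mu^*).
\end{multline*}

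The crux is then to show that the two linear terms are non-negative, i.e. that the MNE obeys the variational inequalities $\int_{\mathcal{X}}\frac{\delta F}{\delta\nu}(\nu^*,\mu^*,x)(\nu-\nu^*)(\mathrm{d}x)\geq0$ for all $\nu\in\mathcal{C}$ and $\int_{\mathcal{X}}\frac{\delta F}{\delta\mu}(\nu^*,\mu^*,y)(\mu-\mu^*)(\mathrm{d}y)\leq0$ for all $\mu\in\mathcal{D}$. For the first, \eqref{eq:saddle} says $\nu^*$ minimizes $F(\cdot,\mu^*)$ over the convex set $\mathcal{C}$, so $\varepsilon\mapsto F(\nu^*+\varepsilon(\nu-\nu^*),\mu^*)$ is minimized at $\varepsilon=0$ on $[0,1]$; expanding $F(\nu^*+\varepsilon(\nu-\nu^*),\mu^*)-F(\nu^*,\mu^*)$ via the fundamental theorem of calculus for the flat derivative (Definition \ref{def:fderivative}), dividing by $\varepsilon>0$ and letting $\varepsilon\to0^+$ (using continuity of the flat derivative) produces exactly the claimed inequality; the second follows symmetrically since $\mu^*$ maximizes $F(\nu^*,\cdot)$ over $\mathcal{D}$. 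One may alternatively invoke the flat-derivative characterization of constrained minimizers used in \cite{10.1214/20-AIHP1140}.

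Plugging these non-negativity facts in gives $F(\nu,\mu^*)-F(\nu^*,\mu)\geq \ell_\nu D_h(\nu,\nu^*)+\ell_\mu D_h(\mu,\mu^*)\geq \ell\big(D_h(\nu,\nu^*)+D_h(\mu,\mu^*)\big)$ with $\ell=\min\{\ell_\nu,\ell_\mu\}$, which combined with the first display proves the lemma. The only step I expect to require care is the passage to the limit $\varepsilon\to0^+$ in the first-order optimality argument; everything else is a direct use of Assumption \ref{def: def-F-strong-conv-conc}. Note that no separate convexity-concavity hypothesis (Assumption \ref{def: def-F-conv-conc}) is needed, as it is implied by \eqref{eq:strong-convexF}--\eqref{eq:strong-concaveF} together with $D_h\geq0$.
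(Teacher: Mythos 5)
Your proof is correct and follows essentially the same route as the paper: lower-bound $\operatorname{NI}(\nu,\mu)$ by $F(\nu,\mu^*)-F(\nu^*,\mu)$, split at $F(\nu^*,\mu^*)$, and apply \eqref{eq:strong-convexF}--\eqref{eq:strong-concaveF} at the equilibrium. The one place you diverge is the treatment of the linear terms: the paper disposes of them by asserting that $\frac{\delta F}{\delta \nu}(\nu^*,\mu^*,\cdot)$ and $\frac{\delta F}{\delta \mu}(\nu^*,\mu^*,\cdot)$ are constant a.e.\ at the MNE (so the integrals against $\nu-\nu^*$ and $\mu-\mu^*$ vanish), whereas you only establish the one-sided variational inequalities $\int_{\mathcal{X}}\frac{\delta F}{\delta\nu}(\nu^*,\mu^*,x)(\nu-\nu^*)(\mathrm{d}x)\geq 0$ and $\int_{\mathcal{X}}\frac{\delta F}{\delta\mu}(\nu^*,\mu^*,y)(\mu-\mu^*)(\mathrm{d}y)\leq 0$ via the standard FTC-plus-limit argument for the flat derivative. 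Your version is in fact the more robust one: constancy of the flat derivative is the first-order condition for optimization over all of $\mathcal{P}(\mathcal{X})$, while $\mathcal{C}$ and $\mathcal{D}$ are only assumed to be convex subsets, and the variational inequality is all that is needed (and all that is guaranteed) in that generality. Your closing remark that Assumption \ref{def: def-F-conv-conc} is not needed is consistent with the lemma's hypotheses.
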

\begin{remark}
    We refer to Lemma \ref{lemma:quad-growth-NI} as ``quadratic growth'' of NI error relative to $h$ due to the similar notion of quadratic growth of a convex function relative to the squared Euclidean norm on $\mathbb R^d$ (see e.g. \cite{anitescu}).
\end{remark}
\begin{proof}
Let $(\nu, \mu) \in \mathcal{C} \times \mathcal{D}.$ Since $F$ is $\ell_{\nu}$-strongly convex in $\nu$ and $\ell_{\mu}$-strongly concave in $\mu,$ it follows that
\begin{equation*}
    F(\nu, \mu^*) - F(\nu^*, \mu^*) \geq \int_\mathcal{X} \frac{\delta F}{\delta \nu}(\nu^*,\mu^*,x)(\nu-\nu^*)(\mathrm{d}x) + \ell_{\nu} D_h(\nu, \nu^*),
\end{equation*}
\begin{equation*}
    F(\nu^*, \mu) - F(\nu^*, \mu^*) \leq \int_\mathcal{X} \frac{\delta F}{\delta \mu}(\nu^*,\mu^*,y)(\mu-\mu^*)(\mathrm{d}y) - \ell_{\mu} D_h(\mu, \mu^*).
\end{equation*}
Since $(\nu^*, \mu^*)$ is the MNE of $F,$ we have
\begin{equation*}
    \frac{\delta F}{\delta \nu}(\nu^*,\mu^*,x) = \text{constant}, \quad \frac{\delta F}{\delta \mu}(\nu^*,\mu^*,y) = \text{constant},
\end{equation*}
for all $(x,y) \in \mathcal{X} \times \mathcal{X}$ $(\nu^*,\mu^)$-a.e. Hence, adding the inequalities above and using the definition of NI error, we get
\begin{equation*}
    \operatorname{NI}\left(\nu, \mu\right) \geq \ell\left(D_h(\nu, \nu^*) + D_h(\mu, \mu^*)\right).
\end{equation*}
\end{proof}
By Lemma \ref{lemma:quad-growth-NI}, the time-averaged iterates $\left(\frac{1}{N}\sum_{n=0}^{N-1}\nu^{n}, \frac{1}{N}\sum_{n=0}^{N-1}\mu^{n}\right)$ converge in Bregman divergence to the unique MNE $(\nu^*, \mu^*)$ of \eqref{eq:game} with the rates proved in Theorem \ref{thm: conv-sim-bregman} and Theorem \ref{thm:conv-alt-bregman}, respectively. 

We now check that the condition $\sup_{\nu \in \mathcal{C}}D_h(\nu, \nu^0) + \sup_{\mu \in \mathcal{D}}D_h(\mu, \mu^0) < \infty$ required in Theorems \ref{thm: conv-sim-bregman} and \ref{thm:conv-alt-bregman} is satisfied in the specific cases of Examples \ref{example:relative-entropy} and \ref{example:chi-squared}.
\begin{lemma}
\label{lemma:verif-init-entropy}
Let Assumption \ref{assumption:bddflatF} hold and let $h$ denote the relative entropy from Example \ref{example:relative-entropy}. Assume $\nu^0,\mu^0\in\mathcal{E}_{\beta_0}$ for some $\beta_0>0$. Fix a horizon $N\ge 1$ and a stepsize $\tau>0$.
Define $\mathcal{C}=\mathcal{D}=\mathcal{E}_{\bar\beta}$, with $\bar\beta \coloneqq \beta_0 + 2N\tau\max\{C_1,C_2\}$. Let $(\nu^n,\mu^n)_{n=0}^N$ be the iterates produced by Algorithms~\ref{eq:mirror-sim-explicit} and \ref{eq:mirror-alt} with feasible sets
$\mathcal{C},\mathcal{D}$. Then
\[
(\nu^n,\mu^n)_{n=0}^N \subset \mathcal{E}_{\bar\beta}.
\]
Moreover,
\[
\sup_{\nu\in\mathcal{C}}\mathrm{KL}(\nu,\nu^0)\le \bar\beta+\beta_0,
\quad
\sup_{\mu\in\mathcal{D}}\mathrm{KL}(\mu,\mu^0)\le \bar\beta+\beta_0,
\]
and in particular
\[
\sup_{\nu\in\mathcal{C}}\mathrm{KL}(\nu,\nu^0)\;+\;\sup_{\mu\in\mathcal{D}}\mathrm{KL}(\mu,\mu^0)\;<\infty.
\]
\end{lemma}

\begin{proof}
 We provide the proof only for Algorithm \ref{eq:mirror-sim-explicit}, as the argument for the other algorithm is essentially the same. Assume $\nu^0,\mu^0\in \mathcal{E}_{\beta_0}$ for some $\beta_0>0$. Fix a horizon $N\ge 1$ and let $\bar\beta \coloneqq \beta_0 + 2N\tau \max\{C_1,C_2\}$. We take $\mathcal{C}=\mathcal{D}=\mathcal{E}_{\bar\beta}$.

 Using the flat derivative formula \eqref{eq:flat-h}, the first-order optimality condition in Proposition \ref{prop:foc} gives
\begin{equation*}
\begin{cases}
    \log \frac{\nu^{n+1}(x)}{\pi(x)} - \log \frac{\nu^{n}(x)}{\pi(x)} = -\tau \frac{\delta F}{\delta \nu}(\nu^n, \mu^n, x) - \log\int_{\mathcal{X}} e^{-\tau \frac{\delta F}{\delta \nu}(\nu^n, \mu^n, x)}\frac{\nu^n(x)}{\pi(x)}\pi(x)\mathrm{d}x,\\
    \log \frac{\mu^{n+1}(y)}{\pi(y)} - \log \frac{\mu^n(y)}{\pi(y)} = \tau \frac{\delta F}{\delta \mu}(\nu^n, \mu^n, y)  - \log\int_{\mathcal{X}} e^{\tau \frac{\delta F}{\delta \mu}(\nu^n, \mu^n, y)}\frac{\mu^n(y)}{\pi(y)}\pi(y)\mathrm{d}y,
\end{cases}
\end{equation*}
for all $x,y \in \mathcal{X}$ a.e. Taking the sup-norm on both sides over $x,y$ and using the assumptions gives
\begin{equation*}
    \left\|\log \frac{\nu^{n+1}(\cdot)}{\pi(\cdot)}\right\|_{L^\infty(\mathcal{X})} \leq \left\|\log \frac{\nu^n(\cdot)}{\pi(\cdot)}\right\|_{L^\infty(\mathcal{X})} + 2\tau C_1,
\end{equation*}
\begin{equation*}
    \left\|\log \frac{\mu^{n+1}(\cdot)}{\pi(\cdot)}\right\|_{L^\infty(\mathcal{X})} \leq \left\|\log \frac{\mu^n(\cdot)}{\pi(\cdot)}\right\|_{L^\infty(\mathcal{X})} + 2\tau C_2,
\end{equation*}
Iterating from $n=0$ and using $\nu^0,\mu^0\in \mathcal{E}_{\beta_0}$ gives, for all $n=0,\dots,N$,
\[
\left\|\log \frac{\nu^n(\cdot)}{\pi(\cdot)}\right\|_{L^\infty(\mathcal{X})}\le \beta_0 + 2n\tau C_1\le \bar\beta,
\quad
\left\|\log \frac{\mu^n(\cdot)}{\pi(\cdot)}\right\|_{L^\infty(\mathcal{X})} \le \beta_0 + 2n\tau C_2\le \bar\beta.
\]
Hence, $\nu^n,\mu^n\in \mathcal{E}_{\bar\beta}=\mathcal{C}=\mathcal{D}$ for all $n=0,\dots,N$, i.e.,
\[
(\nu^n,\mu^n)_{n=0}^N \subset \mathcal{E}_{\bar\beta}.
\]

Fix any $\nu\in\mathcal{E}_{\bar\beta}$. Since $\nu^0\in\mathcal{E}_{\beta_0}$, we have a.e.
\[
-\bar\beta \le \log\frac{\nu(\cdot)}{\pi(\cdot)}\le \bar\beta,
\quad
-\beta_0 \le \log\frac{\nu^0(\cdot)}{\pi(\cdot)}\le \beta_0,
\]
and therefore a.e.
\[
\log\frac{\nu(\cdot)}{\nu^0(\cdot)} = \log\frac{\nu(\cdot)}{\pi(\cdot)}-\log\frac{\nu^0(\cdot)}{\pi(\cdot)} \le \bar\beta+\beta_0.
\]
Hence, since $\nu$ is a probability measure,
\[
\mathrm{KL}(\nu,\nu^0) \le \int_{\mathcal{X}} (\bar\beta+\beta_0)\,\nu(\mathrm{d}x) = \bar\beta+\beta_0.
\]
Thus, $\sup_{\nu\in\mathcal{C}}\mathrm{KL}(\nu,\nu^0)\le \bar\beta+\beta_0<\infty$.
The same argument with $\mu\in\mathcal{E}_{\bar\beta}$ and $\mu^0\in\mathcal{E}_{\beta_0}$ gives $\sup_{\mu\in\mathcal{D}}\mathrm{KL}(\mu,\mu^0)\le \bar\beta+\beta_0<\infty$. Combining the two bounds proves
\[
\sup_{\nu\in\mathcal{C}}\mathrm{KL}(\nu,\nu^0)+\sup_{\mu\in\mathcal{D}}\mathrm{KL}(\mu,\mu^0)<\infty.
\]
\end{proof}

\begin{lemma}
\label{lemma:verif-init-chi}
    Let Assumption \ref{assumption:bddflatF} hold and let $h$ denote the \(\chi^2\)-divergence from Example \ref{example:chi-squared}. Assume $\nu^0,\mu^0\in\mathcal{F}_{\eta_0}$ for some $\eta_0>0$. Fix a horizon $N\ge 1$ and a stepsize $\tau>0$. Define $\mathcal{C}=\mathcal{D}=\mathcal{F}_{\bar\eta}$, with $\bar\eta \coloneqq \eta_0 + N\tau\max\{C_1,C_2\}$. Let $(\nu^n,\mu^n)_{n=0}^N$ be the iterates produced by Algorithms~\ref{eq:mirror-sim-explicit} and \ref{eq:mirror-alt} with feasible sets $\mathcal{C},\mathcal{D}$. Then
\[
(\nu^n,\mu^n)_{n=0}^N \subset \mathcal{F}_{\bar\eta}.
\]
Moreover,
\[
\frac{1}{2}\sup_{\nu \in \mathcal{C}}\left\|\frac{\nu(\cdot)}{\pi(\cdot)} - \frac{\nu^0(\cdot)}{\pi(\cdot)}\right\|^2_{L_\pi^2(\mathcal{X})}\le \bar\eta^2+\eta_0^2,
\quad
\frac{1}{2}\sup_{\mu \in \mathcal{D}}\left\|\frac{\mu(\cdot)}{\pi(\cdot)} - \frac{\mu^0(\cdot)}{\pi(\cdot)}\right\|^2_{L_\pi^2(\mathcal{X})}\le \bar\eta^2+\eta_0^2,
\]
and in particular
\begin{equation*}
        \frac{1}{2}\sup_{\nu \in \mathcal{C}}\left\|\frac{\nu(\cdot)}{\pi(\cdot)} - \frac{\nu^0(\cdot)}{\pi(\cdot)}\right\|^2_{L_\pi^2(\mathcal{X})} + \frac{1}{2}\sup_{\mu \in \mathcal{D}}\left\|\frac{\mu(\cdot)}{\pi(\cdot)} - \frac{\mu^0(\cdot)}{\pi(\cdot)}\right\|^2_{L_\pi^2(\mathcal{X})} < \infty.
\end{equation*}
\end{lemma}
\begin{proof}
     We provide the proof only for Algorithm \ref{eq:mirror-sim-explicit}, as the argument for the other algorithm is essentially the same. Assume $\nu^0,\mu^0\in\mathcal{F}_{\eta_0}$ for some $\eta_0>0$. Fix a horizon $N\ge 1$ and let $\bar\eta := \eta_0 + N\tau\max\{C_1,C_2\}$. We take $\mathcal{C}=\mathcal{D}=\mathcal{F}_{\bar\eta}$. The first-order condition (see e.g., \cite[Section 5.1.1]{Bonnans2000PerturbationAO}) shows that
\begin{equation*} 
\label{eq:first-order_L2}
\left\langle \frac{\delta F}{\delta \nu}(\nu^n, \mu^{n}, \cdot) + \frac{1}{\tau}
\left(\frac{\mathrm d \nu^{n+1}}{\mathrm d\pi }  -\frac{\mathrm d \nu^n }{\mathrm d\pi}  \right),
\phi-  \frac{\mathrm d \nu^{n+1}}{\mathrm d\pi} 
\right\rangle_{L^2_\pi}\ge 0,  
\quad \forall \phi\in \mathfrak F\,, 
\end{equation*}
\begin{equation*} 
\label{eq:first-order_L2_2}
\left\langle \frac{\delta F}{\delta \mu}(\nu^n, \mu^n, \cdot) - \frac{1}{\tau}
\left(\frac{\mathrm d \mu^{n+1}}{\mathrm d\pi }  -\frac{\mathrm d \mu^n }{\mathrm d\pi}  \right),
\phi-  \frac{\mathrm d \mu^{n+1}}{\mathrm d\pi} 
\right\rangle_{L^2_\pi}\ge 0,  
\quad \forall \phi\in \mathfrak F\,, 
\end{equation*}
where $\left\langle \cdot, \cdot \right\rangle_{L^2_\pi} $ is the inner product on $L^2_\pi(\mathcal{X})$,
and $\mathfrak F$ is the nonempty closed convex set defined by 
$$
\mathfrak F=\left\{\phi\in L^2_\pi(\mathcal{X})\bigg\vert \phi\geq 0\ \pi\text{-a.e. on } \mathcal{X} \text{ and } \int \phi(x)\pi(\mathrm{d}x)=1\right\}.
$$
Define the projection map 
$\Pi_{\mathfrak F}: L^2_\pi(\mathcal{X})\mapsto \mathfrak F$ such that 
$\Pi_{\mathfrak F}(\varphi)=\argmin_{\phi\in \mathfrak F}\|\phi-\varphi\|_{L^2_\pi(\mathcal{X})}$ for all $\varphi \in L^2_\pi(\mathcal{X})$, 
which satisfies 
$$
\left\langle \Pi(\varphi)-\varphi, \phi-  \Pi(\varphi)\right\rangle_{L^2_\pi}\ge 0,  \quad \forall \phi\in \mathfrak F. 
$$
Then 
$$
\frac{\mathrm d \nu^{n+1}}{\mathrm d \pi} = \Pi_{\mathfrak F}\left( \frac{\mathrm d \nu^n }{\mathrm d\pi }  -\tau\frac{\delta F}{\delta \nu}(\nu^n, \mu^n, \cdot)  \right),
$$
$$\frac{\mathrm d \mu^{n+1}}{\mathrm d \pi} = \Pi_{\mathfrak F}\left( \frac{\mathrm d \mu^n }{\mathrm d\pi }  +\tau\frac{\delta F}{\delta \mu}(\nu^n, \mu^n, \cdot)  \right).$$ Note $\|\Pi_{\mathfrak F}(\varphi_1)-\Pi_{\mathfrak F}(\varphi_2)\|_{L^2_\pi(\mathcal{X})}
\le \|\varphi_1-\varphi_2\|_{L^2_\pi(\mathcal{X})}$ for all $\varphi_1,\varphi_2\in L^2_\pi(\mathcal{X})$ (see e.g., \cite[Theorem 4.3-1]{ciarlet2013linear}). Moreover, since $\frac{\mathrm d \nu^n}{\mathrm d \pi} = \Pi_{\mathfrak F}\left(\frac{\mathrm d \nu^n}{\mathrm d \pi}\right),$ $\frac{\mathrm d \mu^n}{\mathrm d \pi} = \Pi_{\mathfrak F}\left(\frac{\mathrm d \mu^n}{\mathrm d \pi}\right),$
\begin{align*}
\left\|\frac{\mathrm d \nu^{n+1}}{\mathrm d \pi} \right\|_{L^2_\pi(\mathcal{X})} 
& \le  
\left \| \frac{\mathrm d \nu^n }{\mathrm d\pi }\right\|_{L^2_\pi(\mathcal{X})} + \tau
\left \|\frac{\delta F}{\delta \nu}(\nu^n, \mu^n, \cdot)\right\|_{L^2_\pi(\mathcal{X})} \leq \left \| \frac{\mathrm d \nu^n }{\mathrm d\pi }\right\|_{L^2_\pi(\mathcal{X})} +\tau C_1,
\end{align*}
\begin{align*}
\left\|\frac{\mathrm d \mu^{n+1}}{\mathrm d \pi} \right\|_{L^2_\pi(\mathcal{X})} 
& \le  
\left \| \frac{\mathrm d \mu^n }{\mathrm d\pi }\right\|_{L^2_\pi(\mathcal{X})} + \tau
\left \|\frac{\delta F}{\delta \mu}(\nu^n, \mu^n, \cdot)\right\|_{L^2_\pi(\mathcal{X})}
 \leq \left \| \frac{\mathrm d \mu^n }{\mathrm d\pi }\right\|_{L^2_\pi(\mathcal{X})} + \tau C_2.
\end{align*}
 Iterating from $n=0$ and using $\nu^0,\mu^0\in \mathcal{F}_{\eta_0}$ gives, for all $n=0,\dots,N$,
\[
\left \| \frac{\mathrm d \nu^n }{\mathrm d\pi }\right\|_{L^2_\pi(\mathcal{X})}\le \eta_0 + n\tau C_1\le \bar\eta,
\quad
\left \| \frac{\mathrm d \mu^n }{\mathrm d\pi }\right\|_{L^2_\pi(\mathcal{X})} \le \eta_0 + n\tau C_2\le \bar\eta.
\]
Hence, $\nu^n,\mu^n\in \mathcal{F}_{\bar\eta}=\mathcal{C}=\mathcal{D}$ for all $n=0,\dots,N$, i.e.,
\[
(\nu^n,\mu^n)_{n=0}^N \subset \mathcal{F}_{\bar\eta}.
\]
Fix any $\nu\in\mathcal{F}_{\bar\eta}$. Since $\nu^0\in\mathcal{F}_{\eta_0}$, we have
\[
\left \| \frac{\mathrm d \nu }{\mathrm d\pi }\right\|_{L^2_\pi(\mathcal{X})}\le \bar\eta,
\quad
\left \| \frac{\mathrm d \nu^0 }{\mathrm d\pi }\right\|_{L^2_\pi(\mathcal{X})}\le \eta_0,
\]
and therefore 
\[
\frac{1}{2}\left\|\frac{\nu(\cdot)}{\pi(\cdot)} - \frac{\nu^0(\cdot)}{\pi(\cdot)}\right\|^2_{L_\pi^2(\mathcal{X})} \le \bar\eta^2+\eta_0^2.
\]
Thus, $\sup_{\nu\in\mathcal{C}}\frac{1}{2}\left\|\frac{\nu(\cdot)}{\pi(\cdot)} - \frac{\nu^0(\cdot)}{\pi(\cdot)}\right\|^2_{L_\pi^2(\mathcal{X})} \le \bar\eta^2+\eta_0^2<\infty$.
The same argument with $\mu\in\mathcal{F}_{\bar\eta}$ and $\mu^0\in\mathcal{F}_{\eta_0}$ gives $\sup_{\mu\in\mathcal{D}}\frac{1}{2}\left\|\frac{\mu(\cdot)}{\pi(\cdot)} - \frac{\mu^0(\cdot)}{\pi(\cdot)}\right\|^2_{L_\pi^2(\mathcal{X})} \le \bar\eta^2+\eta_0^2<\infty$. Combining the two bounds proves
\[
\frac{1}{2}\sup_{\nu \in \mathcal{C}}\left\|\frac{\nu(\cdot)}{\pi(\cdot)} - \frac{\nu^0(\cdot)}{\pi(\cdot)}\right\|^2_{L_\pi^2(\mathcal{X})} + \frac{1}{2}\sup_{\mu \in \mathcal{D}}\left\|\frac{\mu(\cdot)}{\pi(\cdot)} - \frac{\mu^0(\cdot)}{\pi(\cdot)}\right\|^2_{L_\pi^2(\mathcal{X})} < \infty.
\]
\end{proof}
\section{Verification of Assumption \ref{def: def-F-conv-conc}, \ref{assumption:bddflatF} and \ref{def:relative-smoothness} for the examples in Section \ref{example:GAN-example}}
\label{appendix:verification-GAN}

In this section we verify that Assumptions \ref{def: def-F-conv-conc}, \ref{assumption:bddflatF} and \ref{def:relative-smoothness} are satisfied by the objective function $F$ in Section \ref{example:GAN-example}.

\begin{proposition}[Verification of assumptions for the Wasserstein GAN]
\label{prop:verification-GANs}
Let $\mathcal{Y}, \mathcal{Z} \subset \mathbb R^d$, with $\hat{\xi} \in \mathcal{P}(\mathcal{Y})$ and $\xi \in \mathcal{P}(\mathcal{Z})$. Suppose $T_{\theta}: \mathcal{Z} \to \mathcal{Y}$ is measurable with $\theta \in \Theta \subset \mathbb R^d$, and for each $w \in \mathcal{W} \subset \mathbb R^d$, the critic $D_{w}: \mathcal{Y} \to \mathbb R$ is measurable and $1$-Lipschitz, i.e., $|D_w(y)-D_w(y')| \le |y-y'|$, for all $y,y'\in\mathcal Y$. Assume moreover that $\mathcal Y$ has finite diameter, i.e., $\mathrm{diam}(\mathcal Y) \coloneqq \sup_{y,y'\in\mathcal Y}\|y-y'\| < \infty$, (e.g., if $\mathcal Y$ is compact), and fix $y_0\in\mathcal Y$. Then Assumptions \ref{def: def-F-conv-conc}, \ref{assumption:bddflatF} and \ref{def:relative-smoothness} are satisfied by the objective
\[
F(\nu, \mu) \coloneqq \int_{\mathcal{W}} \int_{\Theta} f(\theta, w)\, \nu(\mathrm{d}\theta)\, \mu(\mathrm{d}w)
\]
from Example \ref{example:GAN-example}.
\end{proposition}
\begin{proof}
By Definition \ref{def:fderivative},
\[
\frac{\delta F}{\delta \nu}(\nu,\mu,\theta) = \int_{\mathcal{W}} f(\theta, w)\, \mu(\mathrm{d}w),
\qquad
\frac{\delta F}{\delta \mu}(\nu,\mu,w) = \int_{\Theta} f(\theta, w)\, \nu(\mathrm{d}\theta).
\]
Therefore, Assumption \ref{def: def-F-conv-conc} holds with equality and Assumption \ref{def:relative-smoothness} holds with equality with $L_\nu = L_\mu = 0$.

It remains to verify uniform boundedness of $f$ and hence of $F$ and its flat derivatives. Recall
\[
f(\theta,w)=\int_{\mathcal Y} D_w(y)\,\bigl(T_\theta\#\xi-\hat\xi\bigr)(\mathrm dy)
= \int_{\mathcal Y} D_w(y)\,(T_\theta\#\xi)(\mathrm dy)-\int_{\mathcal Y} D_w(y)\,\hat\xi(\mathrm dy).
\]
Since adding a constant to $D_w$ does not change $f(\theta,w)$ because $(T_\theta\#\xi-\hat\xi)$ has total mass $0$, we may replace $D_w$ by $
\widetilde D_w(y)\coloneqq D_w(y)-D_w(y_0)$, so that $\widetilde D_w(y_0)=0$ and $\widetilde D_w$ is still $1$-Lipschitz. Then for all $y\in\mathcal Y$,
\[
|\widetilde D_w(y)|
=|D_w(y)-D_w(y_0)|
\le |y-y_0|
\le \mathrm{diam}(\mathcal Y).
\]
Hence, using $\hat\xi\in\mathcal P(\mathcal Y)$ and $T_\theta\#\xi\in\mathcal P(\mathcal Y)$,
\[
|f(\theta,w)|
= \left|\int_{\mathcal Y} \widetilde D_w(y)\,\bigl(T_\theta\#\xi-\hat\xi\bigr)(\mathrm dy)\right|
\le \int_{\mathcal Y} |\widetilde D_w(y)|\,(T_\theta\#\xi)(\mathrm dy)
   +\int_{\mathcal Y} |\widetilde D_w(y)|\,\hat\xi(\mathrm dy)
\le 2\mathrm{diam}(\mathcal Y).
\]
Consequently, for all $\nu,\mu\in\mathcal P(\mathcal X)$ and all $\theta\in\Theta$, $w\in\mathcal W$,
\[
\left|\frac{\delta F}{\delta \nu}(\nu,\mu,\theta)\right| \le 2\,\mathrm{diam}(\mathcal Y), \quad
\left|\frac{\delta F}{\delta \mu}(\nu,\mu,w)\right| \le 2\,\mathrm{diam}(\mathcal Y).
\]
Thus Assumption \ref{assumption:bddflatF} holds with
\[
C_1 = C_2 = 2\,\mathrm{diam}(\mathcal Y).
\]
\end{proof}

\begin{proposition}[Verification of assumptions for RLHF]
\label{prop:nlhf-assumptions}
Assume $\mathcal{S}, \mathcal{A}$ are measurable and $P(\cdot\succ\cdot|x)\in[0,1]$ for all $x \sim \rho \in \mathcal{P}(\mathcal{S})$. Then the game with $F= P(\mu \succ \nu)$ satisfies Assumptions \ref{def: def-F-conv-conc}, \ref{assumption:bddflatF} and \ref{def:relative-smoothness}.
\end{proposition}
\begin{proof}
The map $\mu \mapsto P(\mu\succ \nu)$ is linear, hence concave, and $\nu \mapsto P(\mu\succ \nu)$ is also linear, hence convex. Therefore $F$ is convex in $\nu$ and concave in $\mu$, as required by Assumption~ \ref{def: def-F-conv-conc}. Since $F$ is bilinear, it admits the flat derivatives
\[
\frac{\delta F}{\delta \nu}(\mu \succ \nu,y')
= \mathbb E_{x\sim \rho, y\sim \mu(\cdot|x)}\left[P(y\succ y'| x)\right],
\]
\[
\frac{\delta F}{\delta \mu}(\mu\succ \nu ,y)
= \mathbb E_{x\sim \rho, y'\sim \nu(\cdot|x)}\left[P(y\succ y'| x)\right].
\]
Because $P(\cdot\succ\cdot|x)\in[0,1]$, both derivatives are bounded by $1$, yielding Assumption \ref{assumption:bddflatF}. Since $F$ is bilinear it satisfies Assumption \ref{def:relative-smoothness} with equality and $L_\nu=L_\mu=0$.
\end{proof} 

For the adversarial NN example, first, we prove that $F_\varepsilon(\cdot,\mu)$ is $L$-smooth in $\nu$, so it satisfies the first condition of Assumption  \ref{def:relative-smoothness}.
\begin{proposition}[$L$-smoothness of $F_\varepsilon(\cdot,\mu)$]\label{prop:Dh_bound_nu}
Let Assumption \ref{assumption:assump-h} hold. Let $\mu$ be a probability measure on $\mathcal Y\times\mathcal Z$, let $\nu,\nu'$ be probability measures on $\mathbb R^d$ and $\varepsilon \geq 0$. Assume $\hat \varphi$ is bounded by $M_\varphi > 0$. Then
\[
F_\varepsilon(\nu',\mu)-F_\varepsilon(\nu,\mu)-\int_{\mathbb R^d}\frac{\delta F_\varepsilon}{\delta\nu}(\nu,\mu,x)\,(\nu'-\nu)(\mathrm{d}x)
\;\le\;\frac{4M_\varphi}{\alpha}\,D_h(\nu',\nu).
\]
In particular, one may take $L_\nu=\frac{4M_\varphi}{\alpha}$ in Assumption \ref{def:relative-smoothness}.
\end{proposition}
\begin{proof}
For $z\in\mathcal Z$, define $a_\nu(z):=\int_{\mathbb R^d}\hat\varphi(x,z)\,\nu(\mathrm{d}x)$ and $\Delta a(z):=a_{\nu'}(z)-a_\nu(z)$.
Note that
\[
\frac{\delta F_\varepsilon}{\delta\nu}(\nu,\mu,x) =\int_{\mathcal Y\times\mathcal Z}\Big(a_\nu(z)-y\Big)\,\hat\varphi(x,z)\,\mu(\mathrm{d}y,\mathrm{d}z).
\]
Fix $(y,z) \in \mathcal{Y} \times \mathcal{Z}$. Then
$y-a_{\nu'}(z)=y-a_\nu(z)-\Delta a(z)$ and thus
\[
\frac12\big(y-a_{\nu'}(z)\big)^2-\frac12\big(y-a_\nu(z)\big)^2
=\frac12\Big((y-a_\nu(z)-\Delta a(z))^2-(y-a_\nu(z))^2\Big)
=(a_\nu(z)-y)\Delta a(z)+\frac12(\Delta a(z))^2.
\]
Integrating against $\mu(\mathrm{d}y,\mathrm{d}z)$ gives
\begin{align}\label{eq:exact_remainder}
&\frac12\int_{\mathcal Y\times\mathcal Z}\big(y-a_{\nu'}(z)\big)^2\,\mu(\mathrm{d}y,\mathrm{d}z)
-\frac12\int_{\mathcal Y\times\mathcal Z}\big(y-a_\nu(z)\big)^2\,\mu(\mathrm{d}y,\mathrm{d}z)\nonumber\\
&\qquad=
\int_{\mathcal Y\times\mathcal Z}(a_\nu(z)-y)\Delta a(z)\,\mu(\mathrm{d}y,\mathrm{d}z)
+\frac12\int_{\mathcal Y\times\mathcal Z}(\Delta a(z))^2\,\mu(\mathrm{d}y,\mathrm{d}z).
\end{align}
Next, by Fubini,
\begin{align*}
\int_{\mathcal Y\times\mathcal Z}(a_\nu(z)-y)\Delta a(z)\,\mu(\mathrm{d}y,\mathrm{d}z)
&=\int_{\mathcal Y\times\mathcal Z}(a_\nu(z)-y)
\left(\int_{\mathbb R^d}\hat\varphi(x,z)\,(\nu'-\nu)(\mathrm{d}x)\right)\mu(\mathrm{d}y,\mathrm{d}z)\\
&=\int_{\mathbb R^d}
\left[\int_{\mathcal Y\times\mathcal Z}(a_\nu(z)-y)\hat\varphi(x,z)\,\mu(\mathrm{d}y,\mathrm{d}z)\right]
(\nu'-\nu)(\mathrm{d}x)\\
&=\int_{\mathbb R^d}\frac{\delta F_\varepsilon}{\delta\nu}(\nu,\mu,x)\,(\nu'-\nu)(\mathrm{d}x).
\end{align*}
Since the term $-\operatorname{TV}_\varepsilon^2(\mu,\hat\mu)$ does not depend on $\nu$, it cancels in the difference
$F_\varepsilon(\nu',\mu)-F_\varepsilon(\nu,\mu)$, and \eqref{eq:exact_remainder} implies the identity
\begin{equation}\label{eq:remainder_identity_final}
F_\varepsilon(\nu',\mu)-F_\varepsilon(\nu,\mu)-\int_{\mathbb R^d}\frac{\delta F_\varepsilon}{\delta\nu}(\nu,\mu,x)\,(\nu'-\nu)(dx)
=\frac12\int_{\mathcal Y\times\mathcal Z}(\Delta a(z))^2\,\mu(\mathrm{d}y,\mathrm{d}z).
\end{equation}
In particular, the left-hand side is nonnegative, so the absolute value equals the same quantity.

For each $z\in\mathcal Z$,
\[
|\Delta a(z)|
=\left|\int_{\mathbb R^d}\hat\varphi(x,z)\,(\nu'-\nu)(dx)\right|
\le 2M_\varphi\operatorname{TV}(\nu',\nu),
\]
where we used the dual characterization of total variation. Squaring and inserting into \eqref{eq:remainder_identity_final} gives
\begin{align*}
&F_\varepsilon(\nu',\mu)-F_\varepsilon(\nu,\mu)-\int_{\mathbb R^d}\frac{\delta F_\varepsilon}{\delta\nu}(\nu,\mu,x)\,(\nu'-\nu)(dx)\\
&=\frac12\int_{\mathcal Y\times\mathcal Z}(\Delta a(z))^2\,\mu(dy,dz)\\
&\le 2M_\varphi\operatorname{TV}^2(\nu',\nu).
\end{align*}
By Assumption \ref{assumption:assump-h},
\[
F_\varepsilon(\nu',\mu)-F_\varepsilon(\nu,\mu)-\int_{\mathbb R^d}\frac{\delta F_\varepsilon}{\delta\nu}(\nu,\mu,x)\,(\nu'-\nu)(dx)
\le \frac{4M_\varphi}{\alpha}\,D_h(\nu',\nu),
\]
which proves the claim.
\end{proof}

Now, we prove that $\operatorname{TV}_\varepsilon^2$ is flat convex.
\begin{lemma}[Convexity of the squared smoothed TV]\label{lem:convex-tv-eps-square}
Let $\hat\mu$ be a fixed reference probability measure on $\mathcal{Y} \times \mathcal{Z}$ with Lebesgue density $\hat m$. Fix $\varepsilon>0$ and define $\rho_\varepsilon(u):=\sqrt{u^2+\varepsilon^2}$, so that 
\begin{equation*}
    \operatorname{TV}_\varepsilon(\mu,\hat\mu):=\frac12\int_{\mathcal{Y} \times \mathcal{Z}} \rho_\varepsilon\big(m(r)-\hat m(r)\big)\mathrm{d}r,
\end{equation*}
for any $\mu$ absolutely continuous with respect to Lebesgue with density $m$. Then the function
\[
\mu \mapsto \operatorname{TV}_\varepsilon^2(\mu,\hat\mu)
\]
is flat convex on the set of absolutely continuous probability measures.
\end{lemma}
\begin{proof}
We first show that $\mu\mapsto\operatorname{TV}_\varepsilon(\mu,\hat\mu)$ is convex.
Since $\rho_\varepsilon$ is twice continuously differentiable with
\[
\rho_\varepsilon''(u)=\frac{\varepsilon^2}{(u^2+\varepsilon^2)^{3/2}}\ge 0,
\]
it is convex on $\mathbb{R}$. Let $\mu_0,\mu_1$ be absolutely continuous measures with densities $m_0,m_1$ and let $\lambda\in[0,1]$. The convex combination $\mu_\lambda:=\lambda\mu_0+(1-\lambda)\mu_1$ is absolutely continuous with density $m_\lambda=\lambda m_0+(1-\lambda)m_1$. Using convexity of $\rho_\varepsilon$ pointwise, for a.e. $r\in \mathcal{Y} \times \mathcal{Z}$ we have
\begin{align*}
\rho_\varepsilon\!\big(m_\lambda(r)-\hat m(r)\big)
&=\rho_\varepsilon\!\big(\lambda(m_0(r)-\hat m(r))+(1-\lambda)(m_1(r)-\hat m(r))\big)\\
&\le \lambda \rho_\varepsilon\!\big(m_0(r)-\hat m(r)\big)
+(1-\lambda)\rho_\varepsilon\!\big(m_1(r)-\hat m(r)\big).
\end{align*}
Integrating and multiplying by $\frac{1}{2}$ yields
\[
\operatorname{TV}_\varepsilon(\mu_\lambda,\hat\mu)
\le \lambda\,\operatorname{TV}_\varepsilon(\mu_0,\hat\mu)+(1-\lambda)\,\operatorname{TV}_\varepsilon(\mu_1,\hat\mu),
\]
so $\operatorname{TV}_\varepsilon(\cdot,\hat\mu)$ is convex. Moreover $\operatorname{TV}_\varepsilon(\mu,\hat\mu)\ge 0$
for all $\mu$.

Now define $f(\mu):=\operatorname{TV}_\varepsilon(\mu,\hat\mu)$. Since $f\ge 0$ and the map
$\phi(t)=t^2$ is nondecreasing on $[0,\infty)$, from convexity of $f$ we obtain
\[
f(\mu_\lambda)^2
\le \big(\lambda f(\mu_0)+(1-\lambda)f(\mu_1)\big)^2.
\]
Finally, using convexity of $\phi(t)=t^2$ on $\mathbb{R}$ gives
\[
f(\mu_\lambda)^2
\le \lambda f(\mu_0)^2+(1-\lambda)f(\mu_1)^2,
\]
i.e.,
\[
\operatorname{TV}_\varepsilon(\mu_\lambda,\hat\mu)^2
\le \lambda \operatorname{TV}_\varepsilon(\mu_0,\hat\mu)^2+(1-\lambda)\operatorname{TV}_\varepsilon(\mu_1,\hat\mu)^2.
\]
\end{proof}
Next, we prove an upper bound on $\operatorname{TV}_\varepsilon$.
\begin{lemma}[Upper bound on $\operatorname{TV}_\varepsilon$]\label{lem:TvEps_upper_bound}
Let ${\mathcal{Y} \times \mathcal{Z}}\subset\mathbb{R}^d$ be compact and convex and let $\mu, \hat\mu$ be absolutely continuous with
densities $m, \hat m$, respectively. Then
\begin{equation}\label{eq:TvEps_bound_finite_volume}
\operatorname{TV}_\varepsilon(\mu,\hat\mu)\ \le\ 1+\frac{\varepsilon}{2}\,|{\mathcal{Y} \times \mathcal{Z}}|.
\end{equation}
\end{lemma}
\begin{proof}
We use the elementary inequality, valid for all $u\in\mathbb{R}$,
\begin{equation}\label{eq:sqrt_ineq}
\sqrt{u^2+\varepsilon^2}\le |u|+\varepsilon.
\end{equation}
Indeed, squaring both sides gives $u^2+\varepsilon^2\le u^2+2\varepsilon|u|+\varepsilon^2$.

Applying \eqref{eq:sqrt_ineq} with $u=m-\hat m$ and integrating,
\begin{align*}
\operatorname{TV}_\varepsilon(\mu,\hat\mu)
&=\frac12\int_{\mathcal{Y} \times \mathcal{Z}} \sqrt{(m-\hat m)^2+\varepsilon^2}\mathrm dr\\
&\le \frac12\int_{\mathcal{Y} \times \mathcal{Z}} |m-\hat m|\mathrm dr+\frac{\varepsilon}{2}|{\mathcal{Y} \times \mathcal{Z}}|
=\operatorname{TV}(\mu,\hat\mu)+\frac{\varepsilon}{2}\,|{\mathcal{Y} \times \mathcal{Z}}|.
\end{align*}
Since $\mu,\hat\mu$ are probability measures, then $\operatorname{TV}(\mu,\hat\mu)\le 1$, yielding
\eqref{eq:TvEps_bound_finite_volume}.
\end{proof}

\begin{proposition}[Verification of assumptions for adversarial NN]
\label{prop:verification-adversarial}
    Let $\mathcal{Y} \subset \mathbb R$ and $\mathcal{Z} \subset \mathbb R^{d-1}$ be compact and convex with $\hat{\mu} \in \mathcal{P}(\mathcal{Y} \times \mathcal{Z}).$ Let $h$ be the \(\chi^2\)-divergence in Example \ref{example:chi-squared}. For $x \coloneqq (w,b) \in \mathbb R^d$ and $z \in \mathbb R^{d-1},$ let $\hat{\varphi}(x,z) \coloneqq \ell(b)\varphi(w \cdot z),$ where $\ell: \mathbb R \to [-K,K]$ is a clipping function with clipping threshold $K > 0$ and $\varphi:\mathbb R \to \mathbb R$ is a bounded, continuous, non-constant function. Let $\varepsilon > 0 $. Then Assumptions \ref{def: def-F-conv-conc}, \ref{assumption:bddflatF} and \ref{def:relative-smoothness} are satisfied by the objective 
    \begin{equation*}
        F_\varepsilon(\nu, \mu) = \frac{1}{2}\int_{\mathcal{Y} \times \mathcal{Z}} \left|y-\mathbb E^{X \sim \nu}[\hat{\varphi}(X, z)]\right|^2 \mu(\mathrm{d}y, \mathrm{d}z) - \operatorname{TV}_\varepsilon^2(\mu,\hat{\mu}).
    \end{equation*}
\end{proposition}
\begin{proof}
    Observe that by linearity of the expectation in $\nu$ and convexity of $|\cdot|^2,$ the function $$F^0(\nu,\mu) = \frac{1}{2}\int_{\mathcal{Y} \times \mathcal{Z}} \left|y-\mathbb E^{X \sim \nu}[\hat{\varphi}(X, z)]\right|^2 \mu(\mathrm{d}y, \mathrm{d}z)$$ satisfies the flat-convexity condition $$F^0((1-\lambda)\nu + \lambda \nu',\mu) \leq (1-\lambda)F^0(\nu,\mu) + \lambda F^0(\nu',\mu),$$ for any $\nu, \nu' \in \mathcal{P}(\mathbb R^d),$ $\mu \in \mathcal{P}(\mathcal{Y} \times \mathcal{Z})$ and any $\lambda \in [0,1].$ Since $F^0(\cdot,\mu) \in \mathfrak{C}^1(\mathbb R^d),$ by \cite[Lemma 4.1]{10.1214/20-AIHP1140}, $\nu \mapsto F_\varepsilon(\nu,\mu)$ satisfies $D_{F_\varepsilon(\cdot,\mu)}(\nu',\nu) \geq 0$. 
    
    By Lemma \ref{lem:convex-tv-eps-square}, it holds that $\operatorname{TV}_\varepsilon^2$ is convex, that is, $$\operatorname{TV}_\varepsilon^2((1-\lambda)\mu + \lambda \mu',\hat{\mu}) \leq (1-\lambda)\operatorname{TV}^2(\mu,\hat{\mu}) + \lambda \operatorname{TV}^2(\mu',\hat{\mu}),$$ for any $\mu, \mu' \in \mathcal{P}(\mathcal{Y} \times \mathcal{Z})$ and any $\lambda \in [0,1].$ We claim that $\operatorname{TV}_\varepsilon^2(\cdot,\hat \mu) \in \mathfrak{C}^1(\mathcal{Y} \times \mathcal{Z})$. 

    Assume $\mu,\mu',\hat \mu$ are absolutely continuous with respect to Lebesgue on $\mathcal Y\times \mathcal Z$ with densities $m,m',\hat m$, respectively. Fix $\varepsilon>0$ and define $\rho_\varepsilon(u):=\sqrt{u^2+\varepsilon^2}$, $\psi_\varepsilon(u):=\rho_\varepsilon'(u)=\frac{u}{\sqrt{u^2+\varepsilon^2}}$. For a perturbed measure $\mu+\delta (\mu'-\mu)$, with $\delta > 0$,
\begin{equation*}
    \lim_{\delta \to 0}\frac{1}{\delta}\left(\operatorname{TV}_\varepsilon(\mu + \delta(\mu'-\mu), \hat{\mu}) - \operatorname{TV}_\varepsilon(\mu, \hat{\mu})\right)= \frac12\int_{\mathcal{Y} \times \mathcal{Z}} \psi_\varepsilon\big(m(r)-\hat m(r)\big)(\mu'-\mu)(\mathrm{d}r),
\end{equation*}
so the flat derivative is
\[
\frac{\delta \operatorname{TV}_\varepsilon(\cdot,\hat\mu)}{\delta\mu}(\mu,r)
=\frac12\,\psi_\varepsilon\big(m(r)-\hat m(r)\big).
\]
By the chain rule for $-\operatorname{TV}_\varepsilon^2$,
\[
-\frac{\delta \operatorname{TV}_\varepsilon^2(\cdot,\hat\mu)}{\delta\mu}(\mu,r)
=-2\operatorname{TV}_\varepsilon(\mu,\hat\mu) \frac12\,\psi_\varepsilon\big(m(r)-\hat m(r)\big)
=-\operatorname{TV}_\varepsilon(\mu,\hat\mu)\,\psi_\varepsilon\big(m(r)-\hat m(r)\big).
\]
    Also, by linearity of $F^0$ in $\mu,$ it follows from \cite[Lemma 4.1]{10.1214/20-AIHP1140} that $\mu \mapsto F_\varepsilon(\nu,\mu)$ satisfies $D_{F_\varepsilon(\nu, \cdot)}(\mu',\mu) \leq 0$. Therefore, $F_\varepsilon$ satisfies Assumption \ref{def: def-F-conv-conc}.

    To verify Assumption \ref{def:relative-smoothness}, we have by Proposition \ref{prop:Dh_bound_nu} that $F_\varepsilon(\cdot,\mu)$ satisfies the first condition of Assumption \ref{def:relative-smoothness}. For the second condition of Assumption \ref{def:relative-smoothness}, recall $\rho_\varepsilon(u):=\sqrt{u^2+\varepsilon^2}$, $\psi_\varepsilon(u):=\rho_\varepsilon'(u)=\frac{u}{\sqrt{u^2+\varepsilon^2}}$. Then $|\psi_\varepsilon(u)|\le 1$ for all $u \in \mathbb R$ and moreover
\[
\psi_\varepsilon'(u)=\frac{\varepsilon^2}{(u^2+\varepsilon^2)^{3/2}},
\]
implies
$0\le \psi_\varepsilon'(u)\le \frac1\varepsilon$, hence
$|\psi_\varepsilon(a)-\psi_\varepsilon(b)|\le \frac1\varepsilon |a-b|$, for all $a,b \in \mathbb R$.

Define the loss function
\[
\ell_\nu(y,z):=\left|y-\mathbb E^{X\sim\nu}[\hat\varphi(X,z)]\right|^2 .
\]
Since the loss term $\frac12\int \ell_\nu\,d\mu$ is linear in $\mu$, its flat derivative of $F^0$ is $\frac12\,\ell_\nu(r)$. Therefore
\begin{equation}\label{eq:flat-deriv-F-eps}
\frac{\delta F_\varepsilon}{\delta\mu}(\nu,\mu,r)
=\frac12\,\ell_\nu(r)\;-\;\operatorname{TV}_\varepsilon(\mu,\hat\mu)\,\psi_\varepsilon\big(m(r)-\hat m(r)\big).
\end{equation}
Subtracting \eqref{eq:flat-deriv-F-eps} for $\mu'$ and $\mu$ cancels the loss term, yielding
\[
\left|\frac{\delta F_\varepsilon}{\delta\mu}(\nu,\mu',r)-\frac{\delta F_\varepsilon}{\delta\mu}(\nu,\mu,r)\right|
=\big|\operatorname{TV}_\varepsilon(\mu',\hat\mu)\psi_\varepsilon(m'(r)-\hat m(r))-\operatorname{TV}_\varepsilon(\mu,\hat\mu)\psi_\varepsilon(m(r)-\hat m(r))\big|.
\]
Using the triangle inequality,
\begin{align*}
&\big|\operatorname{TV}_\varepsilon(\mu',\hat\mu)\psi_\varepsilon(m'(r)-\hat m(r))-\operatorname{TV}_\varepsilon(\mu,\hat\mu)\psi_\varepsilon(m(r)-\hat m(r))\big|\\
&\le |\operatorname{TV}_\varepsilon(\mu',\hat\mu)-\operatorname{TV}_\varepsilon(\mu,\hat\mu)||\psi_\varepsilon(m'(r)-\hat m(r))|\\
&+ \operatorname{TV}_\varepsilon(\mu,\hat\mu)|\psi_\varepsilon(m'(r)-\hat m(r))-\psi_\varepsilon(m(r)-\hat m(r))|.
\end{align*}
We bound each term. Since $\rho_\varepsilon$ is $1$-Lipschitz, $|\rho_\varepsilon(a)-\rho_\varepsilon(b)|\le |a-b|$, hence
\begin{align*}
|\operatorname{TV}_\varepsilon(\mu',\hat\mu)-\operatorname{TV}_\varepsilon(\mu,\hat\mu)|
&=\frac12\left|\int_{\mathcal{Y} \times \mathcal{Z}} \rho_\varepsilon(m'-\hat m)-\rho_\varepsilon(m-\hat m)\mathrm{d}r\right|\\
&\le \frac12\int_{\mathcal{Y} \times \mathcal{Z}} |m'-m|\mathrm{d}r
=\operatorname{TV}(\mu',\mu).
\end{align*}
Also, $|\psi_\varepsilon(m'(r)-\hat m(r))|\le 1$. By the Lipschitz bound on $\psi_\varepsilon$,
\[
\left|\psi_\varepsilon\big(m'(r)-\hat m(r)\big)-\psi_\varepsilon\big(m(r)-\hat m(r)\big)\right|
\le \frac1\varepsilon |m'(r)-m(r)|.
\]
Combining,
\begin{equation*}
\left|\frac{\delta F_\varepsilon}{\delta\mu}(\nu,\mu',r)-\frac{\delta F_\varepsilon}{\delta\mu}(\nu,\mu,r)\right|
\le \operatorname{TV}(\mu',\mu)+\frac{\operatorname{TV}_\varepsilon(\mu,\hat\mu)}{\varepsilon}\,|m'(r)-m(r)|,
\end{equation*}
for a.e. $r\in{\mathcal{Y} \times \mathcal{Z}}$.

Letting $\mu_\theta = \mu + \theta(\mu'-\mu)$, for $\theta \in [0,1]$, gives
\begin{align*}
    \left|\frac{\delta F_\varepsilon}{\delta\mu}(\nu,\mu_\theta,r) - \frac{\delta F_\varepsilon}{\delta\mu}(\nu,\mu,r)\right| &\leq \operatorname{TV}(\mu_\theta,\mu)+\frac{\operatorname{TV}_\varepsilon(\mu_\theta,\hat\mu)}{\varepsilon}\,|m_\theta(r)-m(r)|\\
    &\leq \theta\operatorname{TV}(\mu',\mu) + \theta\left(\frac{1}{\varepsilon}+\frac{1}{2}|\mathcal{Y} \times \mathcal{Z}|\right)|m'(r)-m(r)|,
\end{align*}
where the last inequality follows from the convexity of $\operatorname{TV}$ and Lemma \ref{lem:TvEps_upper_bound}. Lastly, by Definition \ref{def:fderivative}, 
\begin{align*}
    &\left|F_\varepsilon(\nu,\mu') - F_\varepsilon(\nu,\mu) - \int_{\mathcal{Y} \times \mathcal{Z}}\frac{\delta F_\varepsilon}{\delta\mu}(\nu,\mu,r)(\mu'-\mu)(\mathrm{d}r)\right|\\ &\leq \int_0^1 \theta\int_{\mathcal{Y} \times \mathcal{Z}} \left(\operatorname{TV}(\mu',\mu) + \left(\frac{1}{\varepsilon}+\frac{1}{2}|\mathcal{Y} \times \mathcal{Z}|\right)|m'(r)-m(r)|\right)\left|m'(r)-m(r)\right|\mathrm{d}r\mathrm{d}\theta\\
    &=\frac{1}{2}\operatorname{TV}^2(\mu',\mu) + \frac{1}{2}\left(\frac{1}{\varepsilon}+\frac{1}{2}|\mathcal{Y} \times \mathcal{Z}|\right)\|m'-m\|_{L^2_{\mathcal{Y} \times \mathcal{Z}}}^2\\
    &\leq D_h(\mu',\mu)+ \left(\frac{1}{\varepsilon}+\frac{1}{2}|\mathcal{Y} \times \mathcal{Z}|\right)D_h(\mu',\mu)\\
    &= \left(1+\frac{1}{\varepsilon}+\frac{1}{2}|\mathcal{Y} \times \mathcal{Z}|\right)D_h(\mu',\mu),
\end{align*}
where the last inequality follows since $h$ is the $\chi^2$-divergence in Example \ref{example:chi-squared} and since $\mathcal{Y} \times \mathcal{Z}$ is compact, we may take $\pi$ to be the Lebesgue measure in that example. In particular, we take $L_\mu = 1+\frac{1}{\varepsilon}+\frac{1}{2}|\mathcal{Y} \times \mathcal{Z}|$ in Assumption \ref{def:relative-smoothness}.

    To verify Assumptions \ref{assumption:bddflatF}, note that
    \begin{align*}
        \left|\frac{\delta F_\varepsilon}{\delta \nu}(\nu, \mu, x)\right| &\leq \int_{\mathcal{Y} \times \mathcal{Z}} \left|y - \mathbb E^{X \sim \nu}[\hat{\varphi}(X, z)]\right|\left|\hat{\varphi}(x, z)\right|\mu(\mathrm{d}y, \mathrm{d}z)\\
        &\leq M_\varphi\left(\mu_{\mathcal{Y}} + M_\varphi\right) \coloneqq C_1,
    \end{align*}
    where $$\mu_\mathcal{Y} \coloneqq \int_{\mathcal{Y} \times \mathcal{Z}} |y| \mu(\mathrm{d}y, \mathrm{d}z) < \infty$$ since $\mathcal{Y} \times \mathcal{Z}$ is compact. 
    
    Similarly,
    \begin{align*}
        \left|\frac{\delta F_\varepsilon}{\delta \mu}(\nu, \mu, r)\right| &= \left|\frac{1}{2}\left|y-\mathbb E^{X \sim \nu}[\hat{\varphi}(X, z)]\right|^2 - \frac12\,\psi_\varepsilon\big(m(r)-\hat m(r)\big)\right|\\ 
        &\leq \frac{1}{2}\left(1+|\mathcal{Y}|+M_\varphi\right)^2 \coloneqq C_2,
    \end{align*}
    since $\mathcal{Y}$ is compact and $|\psi_\varepsilon|\leq 1.$
\end{proof}

\section{Numerical experiments}
\label{section:numerical-example}
In this section, we outline how to implement the infinite-dimensional algorithms \ref{eq:mirror-sim-explicit} and \ref{eq:mirror-alt} in the case where $h$ is the relative entropy. For brevity, we present the derivations only for Algorithm \ref{eq:mirror-sim-explicit}, as the arguments for Algorithm \ref{eq:mirror-alt} are entirely analogous. The complete algorithms for both the simultaneous and alternating MDA schemes can be found in Algorithm \ref{eq:implement-mirror-sim-explicit} and Algorithm \ref{eq:implement-mirror-alt} in Section \ref{sec:detailed-numerics}.
\subsection{Simulation of infinite-dimensional MDA}
As shown in Example \ref{example:relative-entropy}, by taking $h$ to be the entropy, the corresponding $h$-Bregman divergence is exactly the KL divergence. Moreover, using the flat derivative formula \eqref{eq:flat-h}, the first-order optimality condition in Proposition \ref{prop:foc} gives
\begin{equation*}
\begin{cases}
    \log \nu^{n+1}(x) - \log \nu^n(x) = -\tau \frac{\delta F}{\delta \nu}(\nu^n, \mu^{n}, x) + C,\\
    \log \mu^{n+1}(y) - \log \mu^n(y) = \tau \frac{\delta F}{\delta \mu}(\nu^n, \mu^{n}, y) + C',
\end{cases}
\end{equation*}
for every $n \geq 0$ and, for all $x,y \in \mathcal{X}$ a.e., where $C,C' \in \mathbb R.$ Exponentiating both sides, we obtain 
\begin{equation*}
\begin{cases}
    \nu^{n+1}(x) \propto \nu^n(x)e^{-\tau \frac{\delta F}{\delta \nu}(\nu^n, \mu^{n}, x)},\\
    \mu^{n+1}(y) \propto \mu^n(y)e^{\tau \frac{\delta F}{\delta \mu}(\nu^n, \mu^{n}, y)},
\end{cases}
\end{equation*}
where the constants $C, C'$ are absorbed into the normalizations. Iterating these multiplicative updates gives
    \begin{equation*}
    \begin{cases}
        \nu^n(x)\propto \nu^0(x)\exp\left(-\tau \sum_{k=0}^{n-1}\frac{\delta F}{\delta \nu}(\nu^k,\mu^k,x)\right),\\
        \mu^n(y)\propto \mu^0(y)\exp\left(\tau \sum_{k=0}^{n-1}\frac{\delta F}{\delta \mu}(\nu^k,\mu^k,y)\right).
    \end{cases}
    \end{equation*}
    Hence,
    \begin{equation*}
    \begin{cases}
        \nabla \log \nu^n(x) = \nabla \log \nu^0(x) - \tau \sum_{k=0}^{n-1}\nabla \frac{\delta F}{\delta \nu}(\nu^k,\mu^k,x),\\
        \nabla \log \mu^n(y) = \nabla \log \mu^0(y) - \tau \sum_{k=0}^{n-1}\nabla \frac{\delta F}{\delta \mu}(\nu^k,\mu^k,y),
    \end{cases}
    \end{equation*}
    since the normalization constant disappears after taking the gradient. Thus, in the Langevin implementation one does not need to compute $\nabla \log \nu^1$ and $\nabla \log \mu^1$ explicitly. One only needs $\nabla \log \nu^0$ and $\nabla \log \mu^0$, which is easy to evaluate for standard initializations such as Gaussian or uniform measures, together with the gradients of the flat derivatives along the iterates, which are computed using empirical averages on minibatches.

    Suppose the initial samples ${(X_j, Y_j)}_{j=1}^J$ are drawn from $(\nu^0, \mu^0)$. We set $(X_{j,0}, Y_{j,0})_{j=1}^J = (X_j, Y_j)_{j=1}^J$ and sample from $(\nu^1, \mu^1)$ via Langevin dynamics:
\begin{equation*}
    X_{j,t+1} = X_{j,t} + \gamma \left(\nabla\log \nu^0(X_{j,t}) - \tau\nabla \frac{\delta F}{\delta \nu}(\nu^0, \mu^{0}, X_{j,t})\right) + \sqrt{\frac{2\gamma}{\tau}}\mathcal{N}_{j,t},
\end{equation*}
\begin{equation*}
    Y_{j,t+1} = Y_{j,t} + \gamma \left( \nabla\log \mu^0(Y_{j,t}) + \tau\nabla \frac{\delta F}{\delta \mu}(\nu^0, \mu^{0}, Y_{j,t})\right) + \sqrt{\frac{2\gamma}{\tau}}\mathcal{N}_{j,t},
\end{equation*}
for $1 \leq j \leq J$ and $0 \leq t \leq T-1$, where $\gamma>0$ is the step size and $\mathcal{N}_{j,t}$ are i.i.d standard Gaussian variables. For sufficiently large $J$ and $T$, the terminal particles $(X_{j,T}, Y_{j,T})_{j=1}^J$ approximate samples from $(\nu^1, \mu^1)$. Repeating this procedure recursively then yields samples from $(\nu^2, \mu^2), \dots, (\nu^n, \mu^n).$ For simplicity, we may take $\nu_0, \mu_0$ be uniform distributions, so that $\nabla \log \nu^0 = \nabla \log \mu^0 = 0$. 

\subsection{Training GANs by MDA}
We train the mean-field GAN from Example \ref{example:GAN-example} using simultaneous and alternating MDA-GAN (Algorithms \ref{alg:sim_MD} and \ref{alg:alt_MD}) on the 8-Gaussian mixture and Swiss Roll datasets \cite{gulrajani2017improvedtrainingwassersteingans}. Full algorithmic details, including hyperparameters and network architectures, are in Section \ref{sec:detailed-numerics}. Both methods are run for $2000$ iterations, with performance assessed by visualizing generated samples at $400$, $1000$, and $2000$ iterations.
\begin{figure}[htbp]
  \centering
  \begin{subfigure}{0.24\textwidth}
    \includegraphics[width=\linewidth]{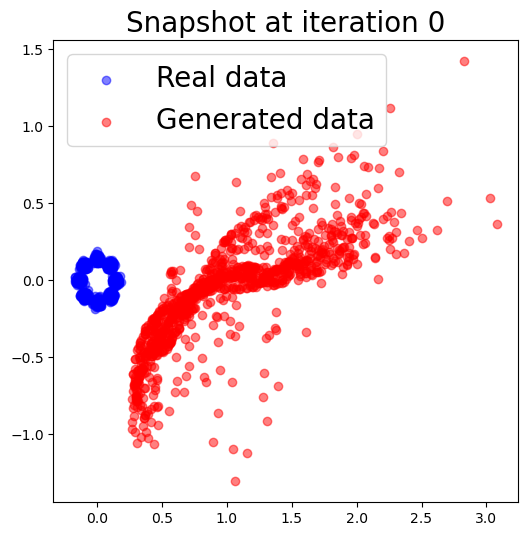}
    \caption{Iteration $n=0$}\label{fig:gauss_sim0}
  \end{subfigure}
  \begin{subfigure}{0.24\textwidth}
    \includegraphics[width=\linewidth]{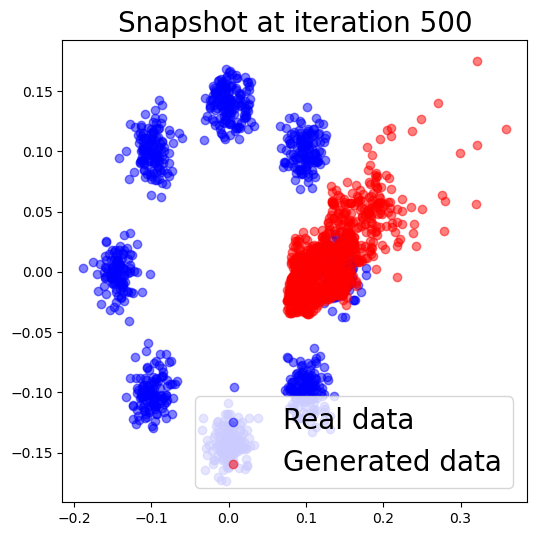}
    \caption{Iteration $n=500$}\label{fig:gauss_sim500}
  \end{subfigure}
  \begin{subfigure}{0.24\textwidth}
    \includegraphics[width=\linewidth]{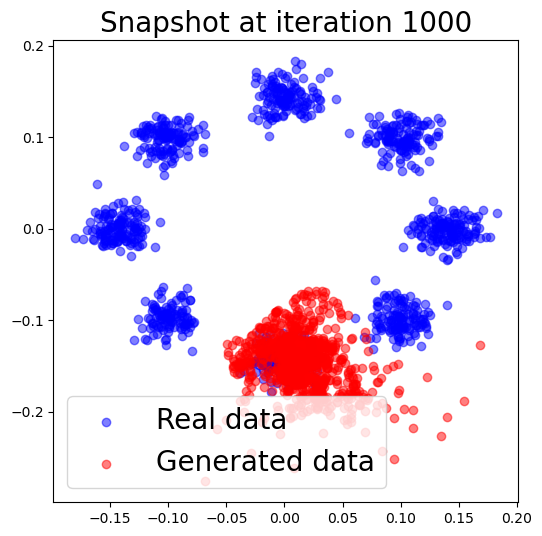}
    \caption{Iteration $n=1000$}\label{fig:gauss_sim1000}
  \end{subfigure}
  \begin{subfigure}{0.24\textwidth}
    \includegraphics[width=\linewidth]{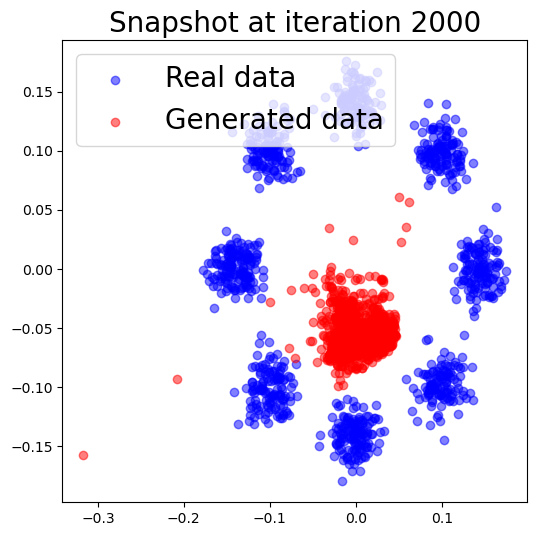}
    \caption{Iteration $n=2000$}\label{fig:gauss_sim2000}
  \end{subfigure}
  \caption{Simultaneous MDA-GAN (Algorithm \ref{alg:sim_MD}) learning an 8-Gaussian mixture}
  \label{fig:gauss-simultaneous}
\end{figure}

\begin{figure}[htbp]
  \centering
  \begin{subfigure}{0.24\textwidth}
    \includegraphics[width=\linewidth]{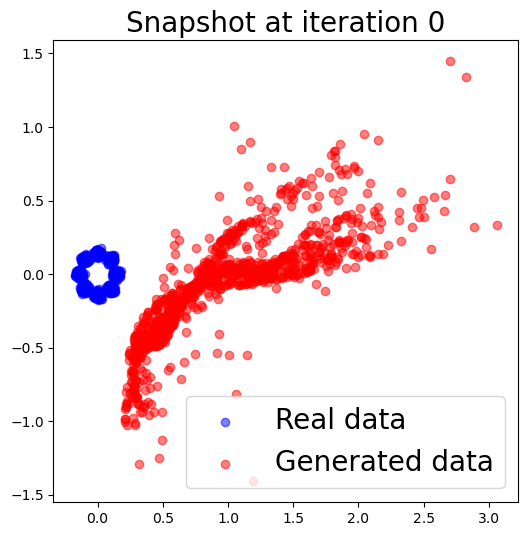}
    \caption{Iteration $n=0$}\label{fig:gauss_seq0}
  \end{subfigure}
  \begin{subfigure}{0.24\textwidth}
    \includegraphics[width=\linewidth]{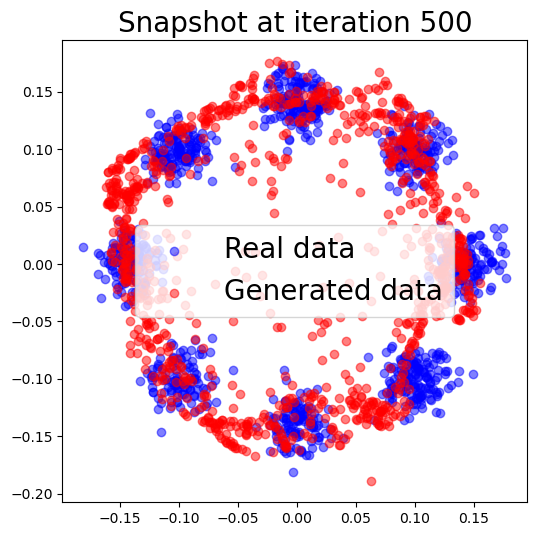}
    \caption{Iteration $n=500$}\label{fig:gauss_seq500}
  \end{subfigure}
  \begin{subfigure}{0.24\textwidth}
    \includegraphics[width=\linewidth]{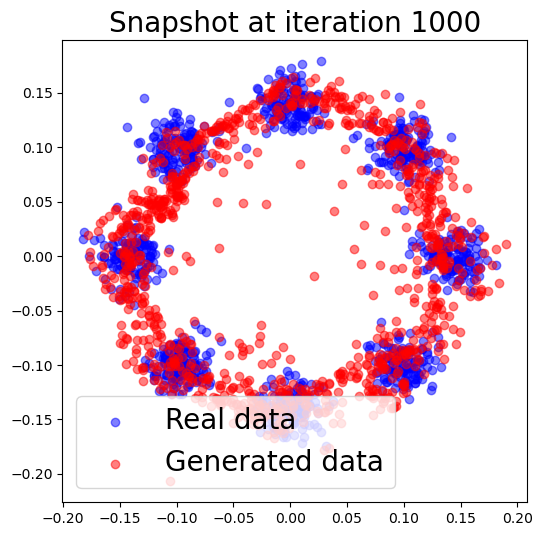}
    \caption{Iteration $n=1000$}\label{fig:gauss_seq1000}
  \end{subfigure}
  \begin{subfigure}{0.24\textwidth}
    \includegraphics[width=\linewidth]{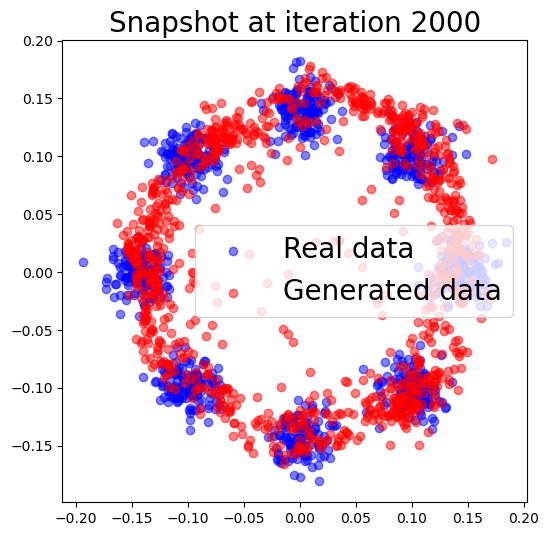}
    \caption{Iteration $n=2000$}\label{fig:gauss_seq2000}
  \end{subfigure}
  
  \caption{Alternating MDA-GAN (Algorithm \ref{alg:alt_MD}) learning an 8-Gaussian mixture}
  \label{fig:gauss-alternating}
\end{figure}
\begin{figure}[htbp]
	\centering
	\begin{subfigure}{0.24\textwidth}
		\includegraphics[width=\linewidth]{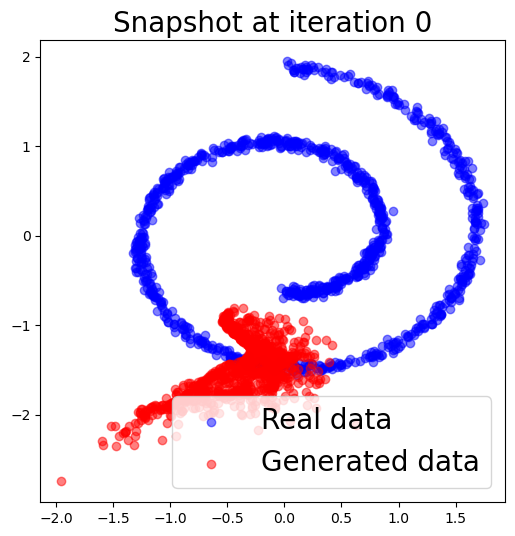}
		\caption{Iteration $n=0$}\label{fig:roll_sim0}
	\end{subfigure}
	\begin{subfigure}{0.24\textwidth}
		\includegraphics[width=\linewidth]{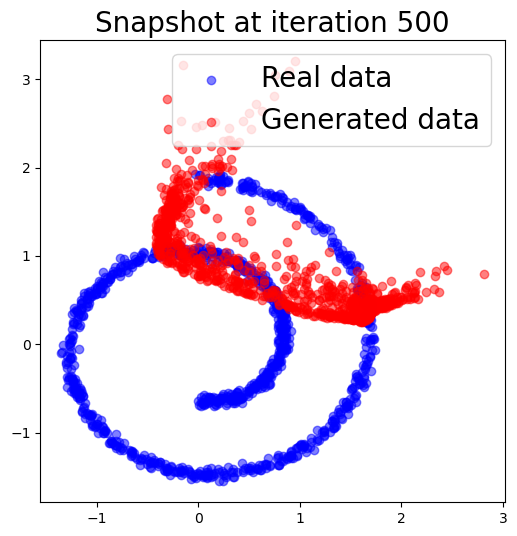}
		\caption{Iteration $n=500$}\label{fig:roll_sim500}
	\end{subfigure}
	\begin{subfigure}{0.24\textwidth}
		\includegraphics[width=\linewidth]{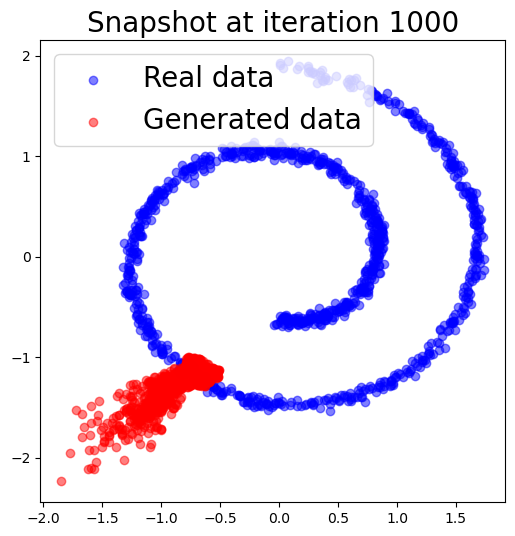}
		\caption{Iteration $n=1000$}\label{fig:roll_sim1000}
	\end{subfigure}
	\begin{subfigure}{0.24\textwidth}
		\includegraphics[width=\linewidth]{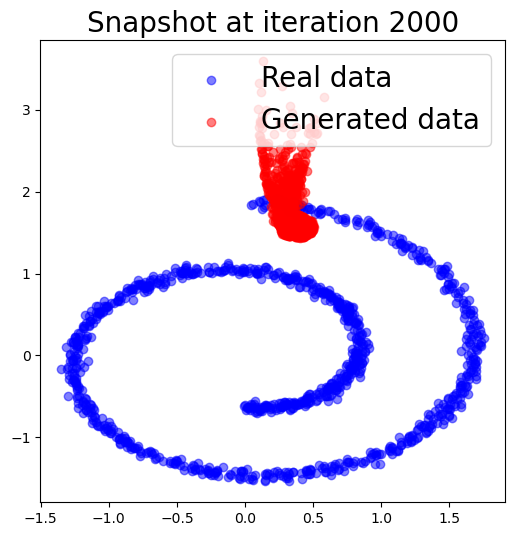}
		\caption{Iteration $n=2000$}\label{fig:roll_sim2000}
	\end{subfigure}
	\caption{Simultaneous MDA-GAN (Algorithm \ref{alg:sim_MD}) learning the Swiss Roll}
	\label{fig:roll-simultaneous}
\end{figure}
\begin{figure}[htbp]
	\centering
	\begin{subfigure}{0.24\textwidth}
		\includegraphics[width=\linewidth]{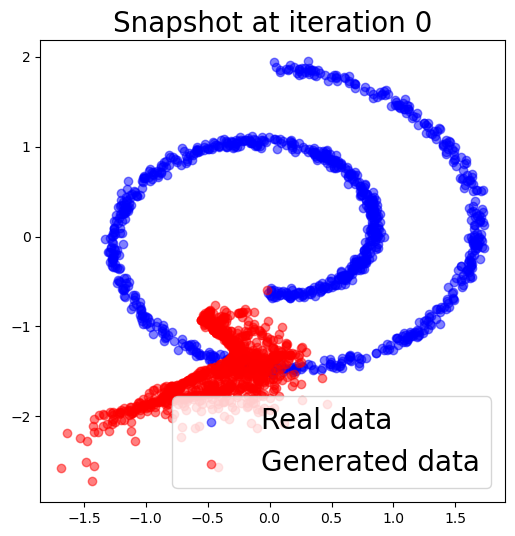}
		\caption{Iteration $n=0$}\label{fig:roll_seq0}
	\end{subfigure}
	\begin{subfigure}{0.24\textwidth}
		\includegraphics[width=\linewidth]{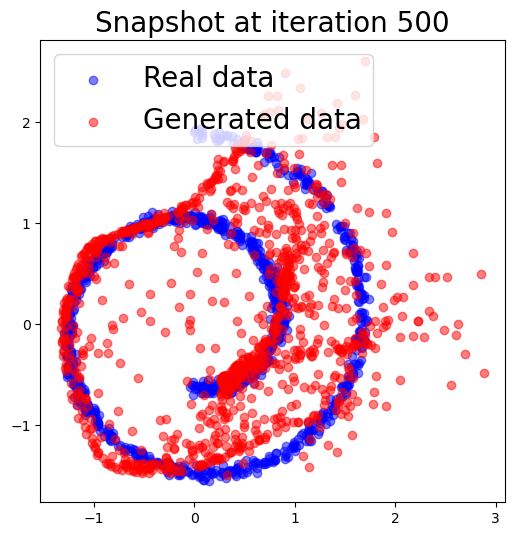}
		\caption{Iteration $n=500$}\label{fig:roll_seq500}
	\end{subfigure}
	\begin{subfigure}{0.24\textwidth}
		\includegraphics[width=\linewidth]{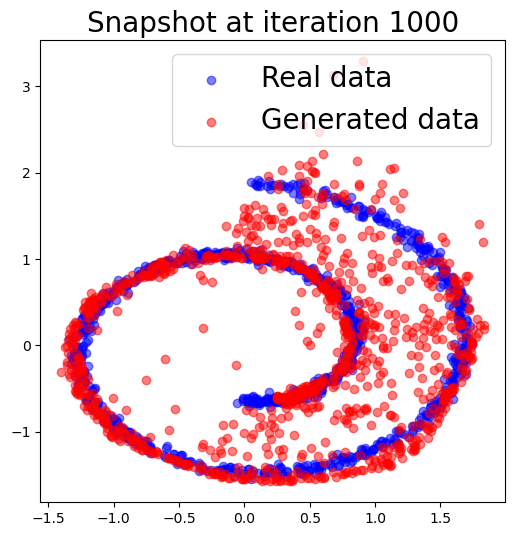}
		\caption{Iteration $n=1000$}\label{fig:roll_seq1000}
	\end{subfigure}
	\begin{subfigure}{0.24\textwidth}
		\includegraphics[width=\linewidth]{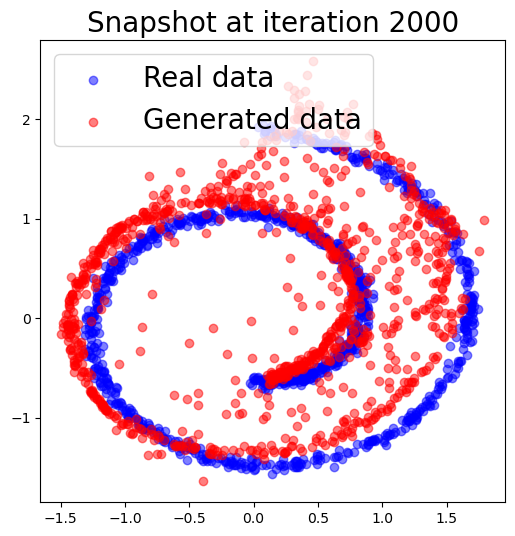}
		\caption{Iteration $n=2000$}\label{fig:roll_seq2000}
	\end{subfigure}
	\caption{Alternating MDA-GAN (Algorithm \ref{alg:alt_MD}) learning the Swiss Roll}
	\label{fig:roll-alternating}
\end{figure}
\begin{figure}[htbp]
	\centering
	\begin{subfigure}{0.495\textwidth}
		\includegraphics[width=\linewidth]{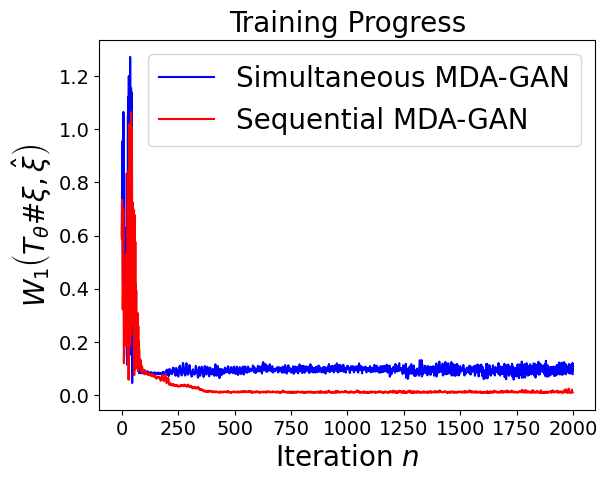}
		\caption{8-Gaussian mixture}\label{fig:gauss_wass}
	\end{subfigure}
	\begin{subfigure}{0.495\textwidth}
		\includegraphics[width=\linewidth]{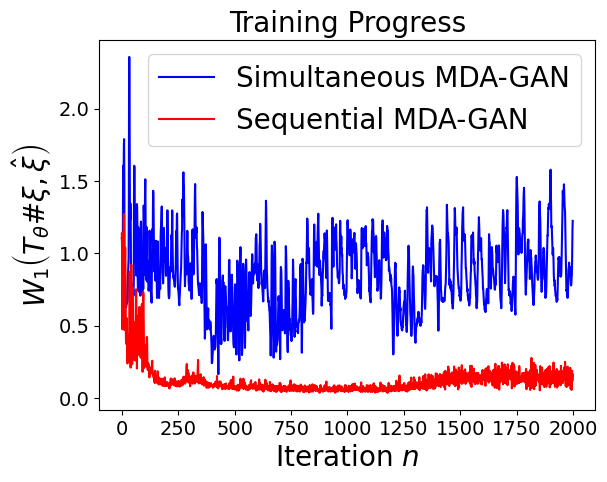}
		\caption{Swiss Roll}\label{fig:roll_wass}
	\end{subfigure}
	\caption{$L^1$-Wasserstein distance between generated and real data for the 8-Gaussian mixture and Swiss Roll}
	\label{fig:wass}
\end{figure}
Figures \ref{fig:gauss-simultaneous} and \ref{fig:gauss-alternating} show the training dynamics of simultaneous and alternating MDA-GANs on the 8-Gaussian mixture, with analogous results on the Swiss Roll in Figures \ref{fig:roll-simultaneous} and \ref{fig:roll-alternating}. In both settings, generated samples start far from the data but the alternating variant captures the multi-modal structure and the spiral geometry of the Swiss Roll more clearly and at earlier iterations. In Figure \ref{fig:wass}, we plot the $L^1$-Wasserstein distance $W_1\big(T_{\theta^n}\#\xi, \hat{\xi}\big)$ for both tasks over iterations $n$, illustrating the faster convergence of alternating MDA-GAN.
\subsection{Details on numerical experiments}
\label{sec:detailed-numerics}
In this section, we present the additional details of the numerical experiments. We begin by summarizing the implementable versions of the simultaneous and alternating MDA algorithms introduced in Section \ref{section:numerical-example}.
\begin{algorithm}[!ht] 
	\caption{\textsc{Implementable simultaneous MDA}}
	\label{eq:implement-mirror-sim-explicit}
	\KwIn{objective function $F,$ initial measures $(\nu^0, \mu^0)$, stepsize $\tau, \gamma > 0,$ time horizons $K, N$ and number of particles $J$}
	Generate i.i.d $\left(X_j^0, Y_j^0\right)_{j=1}^J \sim (\nu^0, \mu^0)$\\ 
	Set $\left(X_{j,0}^0, Y_{j,0}^0\right)_{j=1}^J = \left(X_j^0, Y_j^0\right)_{j=1}^J$\\
	\For{$n =0, 1, \dots ,N-1$}
	{
		\For{$k =0, 1, \dots ,K-1$}
		{
			Generate independent Gaussian random variables $\mathcal{N}_{j,k}^n$\\
			\For{$j =1, 2, \dots ,J$}
			{
				$X_{j,k+1}^n = X_{j,k}^n + \gamma \left(\nabla \log \nu^n(X_{j,k}^n) - \tau\nabla \frac{\delta F}{\delta \nu}(\nu^n, \mu^{n}, X_{j,k}^n)\right) + \sqrt{\frac{2\gamma}{\tau}}\mathcal{N}_{j,k}^n$\\
                
				$Y_{j,k+1}^n = Y_{j,k}^n + \gamma \left(\nabla \log \mu^n(Y_{j,k}^n) + \tau\nabla \frac{\delta F}{\delta \mu}(\nu^n, \mu^{n}, Y_{j,k}^n)\right) + \sqrt{\frac{2\gamma}{\tau}}\mathcal{N}_{j,k}^n$
			}
		}
		\For{$j =1, 2, \dots ,J$}{
			$X_{j,0}^{n+1} = X_{j,K}^n, \quad $
			$Y_{j,0}^{n+1} = Y_{j,K}^n$
		}
		$\nu^n = \frac{1}{J}\sum_{j=1}^J \delta_{X_{j,0}^n}, \quad  \mu^n = \frac{1}{J}\sum_{j=1}^J \delta_{Y_{j,0}^n}$
	}
    Generate $\hat n \sim \mathrm{Unif}\{0,\dots,N-1\}$\\
    \KwOut{$(\nu^{\hat n},\mu^{\hat n})$}
\end{algorithm}
\begin{algorithm}[!ht] 
	\caption{\textsc{Implementable alternating MDA}}
	\label{eq:implement-mirror-alt}
	\KwIn{objective function $F,$ initial measures $(\nu^0, \mu^0)$, stepsize $\tau, \gamma > 0,$ time horizons $K, N$ and number of particles $J$}
	Generate i.i.d $\left(X_j^0, Y_j^0\right)_{j=1}^J \sim (\nu^0, \mu^0)$\\ 
	Set $\left(X_{j,0}^0, Y_{j,0}^0\right)_{j=1}^J = \left(X_j^0, Y_j^0\right)_{j=1}^J$\\
	\For{$n =0, 1, \dots ,N-1$}
	{
		\For{$k =0, 1, \dots ,K-1$}
		{
			Generate independent Gaussian random variables $\mathcal{N}_{j,t}^n$\\
			\For{$j =1, 2, \dots ,J$}
			{
				$X_{j,k+1}^n = X_{j,k}^n +\left(\nabla \log \nu^n(X_{j,k}^n)- \gamma \nabla \frac{\delta F}{\delta \nu}(\nu^n, \mu^{n}, X_{j,k}^n)\right) + \sqrt{\frac{2\gamma}{\tau}}\mathcal{N}_{j,k}^n$
			}
		}
		\For{$j =1, 2, \dots ,J$}{
			$X_{j,0}^{n+1} = X_{j,K}^n$
		}
		$\nu^{n+1} = \frac{1}{J}\sum_{j=1}^J \delta_{X_{j,0}^{n+1}}$\\
		\For{$k =0, 1, \dots ,K-1$}
		{
			Generate independent Gaussian random variables $\mathcal{N}_{j,k}^n$\\
			\For{$j =1, 2, \dots ,J$}
			{
				$Y_{j,k+1}^n = Y_{j,k}^n + \gamma \left(\nabla \log \mu^n(Y_{j,k}^n) + \tau\nabla \frac{\delta F}{\delta \mu}(\nu^{n+1}, \mu^{n}, Y_{j,k}^n)\right) + \sqrt{\frac{2\gamma}{\tau}}\mathcal{N}_{j,k}^n$
			}
		}
		\For{$j =1, 2, \dots ,J$}{
			$Y_{j,0}^{n+1} = Y_{j,K}^n$
		}
		$\mu^n = \frac{1}{J}\sum_{j=1}^J \delta_{Y_{j,0}^n}$
	}
    Generate $\hat n \sim \mathrm{Unif}\{0,\dots,N-1\}$\\
    \KwOut{$(\nu^{\hat n},\mu^{\hat n})$}
\end{algorithm}
We now turn to Algorithms \ref{eq:implement-mirror-sim-explicit} and \ref{eq:implement-mirror-alt} in the setting where $F$ corresponds to the GAN objective introduced in Example \ref{example:GAN-example}. Recall that $F$ takes the form
\begin{align*}
	F(\nu, \mu) &= \int_{\mathcal{W}} \int_{\Theta} \int_\mathcal{Y} D_{w}(y) \left(T_{\theta} \# \xi - \hat{\xi}\right)(\mathrm{d}y) \nu(\mathrm{d}\theta) \mu(\mathrm{d}w)\\
	&= \int_{\mathcal{W}} \int_{\Theta} \int_\mathcal{Y} D_{w}(y) \left(T_{\theta} \# \xi\right)(\mathrm{d}y) \nu(\mathrm{d}\theta) \mu(\mathrm{d}w) -\int_{\mathcal{W}} \int_\mathcal{Y} D_{w}(y) \hat{\xi}(\mathrm{d}y) \mu(\mathrm{d}w)
\end{align*} 
By Definition \ref{def:fderivative}, we have
\begin{equation*}
	\frac{\delta F}{\delta \nu}(\nu,\mu,\theta) = \int_{\mathcal{W}} \int_\mathcal{Y} D_{w}(y) \left(T_{\theta} \# \xi\right)(\mathrm{d}y) \mu(\mathrm{d}w), 
\end{equation*}
\begin{equation*}
	\frac{\delta F}{\delta \mu}(\nu,\mu,w) = \int_{\Theta} \int_\mathcal{Y} D_{w}(y) \left(T_{\theta} \# \xi\right)(\mathrm{d}y) \nu(\mathrm{d}\theta) - \int_\mathcal{Y} D_{w}(y) \hat{\xi}(\mathrm{d}y).
\end{equation*}
The flat derivatives can be approximated using empirical averages. For a batch of real data $\{\xi_1^\textup{real}, \dots, \xi_M^\textup{real}\} \sim \hat{\xi},$ we have 
\begin{equation*}
	\int_\mathcal{Y} D_{w}(y) \hat{\xi}(\mathrm{d}y) \approx \frac{1}{M}\sum_{i=1}^M D_w(\xi^{\textup{real}}_i).
\end{equation*} 
For the term in $\frac{\delta F}{\delta \mu}(\nu,\mu,w)$ that involves integration with respect to both $\nu$ and the generated data $T_{\theta}\#\xi$, we approximate via sampling as follows. We sample 
\begin{equation*}
	\{\theta^{(1)}, \theta^{(2)}, ... ,\theta^{(J)}\} \sim \nu, \quad  \left\{ Z_i^{(j)} \right\}_{i=1}^{M} \sim  T_{\theta^{(j)}} \# \xi,
\end{equation*}
leading to the estimator
\begin{align*}
	\int_{\Theta} \int_\mathcal{Y} D_{w}(y) \left(T_{\theta} \# \xi\right)(\mathrm{d}y) \nu(\mathrm{d}\theta) \approx \frac{1}{JM} \sum_{i=1}^M \sum_{j=1}^{J} D_{w}\left( X_i^{(j)}\right).
\end{align*}
Analogously, for $\frac{\delta F}{\delta \nu}(\nu,\mu,\theta)$ we sample
\begin{equation*}
	\{w^{(1)}, w^{(2)}, ... ,w^{(J)}\} \sim \mu, \quad \{Z_i\}_{i=1}^M\sim T_{\theta} \# \xi,
\end{equation*}
and approximate
\begin{align*}
	\int_{\mathcal{W}} \int_\mathcal{Y} D_{w}(y) \left(T_{\theta} \# \xi\right)(\mathrm{d}y) \mu(\mathrm{d}w) \approx \frac{1}{JM} \sum_{i=1}^M \sum_{j=1}^{J} D_{w^{(j)}}\left(Z_i  \right).  
\end{align*}
To mitigate the computational cost of Algorithms \ref{eq:implement-mirror-sim-explicit} and \ref{eq:implement-mirror-alt}, we follow the approach of \cite{pmlr-v97-hsieh19b} and employ Langevin dynamics with exponential damping (see also their Algorithm 3). Algorithm \ref{eq:implement-mirror-sim-explicit} is essentially a realization of the Mirror-GAN algorithm of \cite{pmlr-v97-hsieh19b}, with different architectures and hyperparameter choices. The alternating variant, Algorithm \ref{eq:implement-mirror-alt}, which is not studied in \cite{pmlr-v97-hsieh19b}, is obtained by adapting that simultaneous algorithm to the alternating update setting. Below, we present the simultaneous and alternating algorithms used in our experiments.
\begin{algorithm}[!ht]
	\caption{\textsc{Simultaneous MDA-GAN}}
	\label{alg:sim_MD}
	\KwIn{Initial parameters $w^{0}, \theta^{0}$, step sizes $\{\gamma^n\}_{n=0}^{N-1},$ $\{\tau^n\}_{n=0}^{N-1}$, time horizon $\{K^n\}_{n=0}^{N-1}$, averaging parameter $\beta \in [0,1]$, source probability measure $\xi$} 
	\For{$n = 0, 1, \dots, N-1$}{
		Set $\bar{w}^{n}, w^n_{0} = w^n$ and $\bar{\theta}^{n}, \theta^n_{0} = \theta^n;$\\
		\For{$k = 0, 1, \dots, K_n-1$}{
			$A=\{Z_1, \dots, Z_M\} \sim T_{\theta^n_{k}} \# \xi$\;
			\begin{equation*}
				\theta^n_{k+1} = \theta^n_{k} - \frac{\gamma^n}{M}\nabla_\theta \sum_{Z_i \in A} D_{w^n}(Z_i) + \sqrt{\frac{2\gamma^n}{\tau^n}}\mathcal{N}^n_{k};
			\end{equation*}
			$B=\{\xi_1^\textup{real}, \dots, \xi_M^\textup{real}\} \sim \hat{\xi}$\;
			$B'=\{Z'_1, \dots, Z'_M\} \sim T_{\theta^n} \# \xi$\;
			\begin{equation*}
				w^n_{k+1} = w^n_{k} + \frac{\gamma^n}{M}\nabla_w\sum_{Z'_i \in B'} D_{w^n_{k}}(Z'_i) - \frac{\gamma^n}{M}\nabla_w\sum_{\xi^\textup{real}_i \in B} D_{w^n_{k}}(\xi^\textup{real}_i) + \sqrt{\frac{2\gamma^n}{\tau^n}}\mathcal{N}^n_{k};
			\end{equation*}
			$\bar{w}^{n} = (1-\beta)\bar{w}^{n} + \beta w^n_{k+1}, \quad
			\bar{\theta}^{n} = (1-\beta)\bar{\theta}^{n} + \beta \theta^n_{k+1};$
		}
		$w^{n+1} = (1-\beta)w^n + \beta \bar{w}^{n}, \quad
		\theta^{n+1} = (1-\beta)\theta^n + \beta \bar{\theta}^{n};$
	}
	\KwOut{$w^N, \theta^N$}
\end{algorithm}
\begin{algorithm}[!ht]
	\caption{\textsc{alternating MDA-GAN}}
	\label{alg:alt_MD}
	\KwIn{Initial parameters $w^{0}, \theta^{0}$, step sizes $\{\gamma^n\}_{n=0}^{N-1},$ $\{\tau^n\}_{n=0}^{N-1}$, time horizon $\{K^n\}_{n=0}^{N-1}$, averaging parameter $\beta \in [0,1]$, source probability measure $\xi$} 
	\For{$n = 0, 1, \dots, N-1$}{
		Set $\bar{w}^{n}, w^n_{0} = w^n$ and $\bar{\theta}^{n}, \theta^n_{0} = \theta^n;$\\
		\For{$k = 0, 1, \dots, K_n-1$}{
			$A=\{Z_1, \dots, Z_M\} \sim T_{\theta^n_{k}} \# \xi$\;
			\begin{equation*}
				\theta^n_{k+1} = \theta^n_{k} - \frac{\gamma^n}{M}\nabla_\theta \sum_{Z_i \in A} D_{w^n}(Z_i) + \sqrt{\frac{2\gamma^n}{\tau^n}}\mathcal{N}^n_{k};
			\end{equation*}
			$\bar{\theta}^{n} = (1-\beta)\bar{\theta}^{n} + \beta \theta^n_{k+1};$
		}
		$\theta^{n+1} = (1-\beta)\theta^n + \beta \bar{\theta}^{n};$\\
		\For{$k = 0, 1, \dots, K_n-1$}{
			$B=\{\xi_1^\textup{real}, \dots, \xi_M^\textup{real}\} \sim \hat{\xi}$\;
			$B'=\{Z'_1, \dots, Z'_M\} \sim T_{\theta^{n+1}} \# \xi$\;
			\begin{equation*}
				w^n_{k+1} = w^n_{k} + \frac{\gamma^n}{M}\nabla_w\sum_{Z'_i \in B'} D_{w^n_{k}}(Z'_i) - \frac{\gamma^n}{M}\nabla_w\sum_{\xi^\textup{real}_i \in B} D_{w^n_{k}}(\xi^\textup{real}_i) + \sqrt{\frac{2\gamma^n}{\tau^n}}\mathcal{N}^n_{k};
			\end{equation*}
			$\bar{w}^{n} = (1-\beta)\bar{w}^{n} + \beta w^n_{k+1};$
		}
		$w^{n+1} = (1-\beta)w^n + \beta \bar{w}^{n};$
	}
	\KwOut{$w^N, \theta^N$}
\end{algorithm}
In all experiments, we closely follow the specifications from \cite{pmlr-v97-hsieh19b}. We adopt the gradient-penalized discriminator of \cite{gulrajani2017improvedtrainingwassersteingans} as a soft-constraint alternative to the original Wasserstein GAN formulation to increase stability. The gradient penalty parameter is set to $\lambda = 0.1.$ For our Simultaneous and alternating MDA-GANs, we fix the damping factor to $\beta = 0.8$. The scheduling of the parameters $K^n, \gamma^n,$ and $\tau^n$ is $K^n = \floor{(1+10^{-5})^n},$ $\gamma^n = \gamma(1-10^{-5})^n,$ with $\gamma = 0.01,$ and $\tau^n = \tau(1- 5\times 10^{-5})^{-n},$ with $\tau = 100.$ The number of samples per batch is $M=1024.$ For both the 8-Gaussian mixture and Swiss Roll datasets, we use fully connected networks for the generator and discriminator, each consisting of two-hidden-layers with $J=512$ neurons on each layer. The generator and discriminator networks use ReLU activations, except for the output layer of the discriminator, which employs a tanh activation. All network parameters are initialized from a normal distribution $\mathcal{N}(0,0.01).$

\section{Differentiability on the primal space}
\label{appendix:B}
In this section, following \cite[Definition 5.43]{Carmona2018ProbabilisticTO} and \cite[Definition 7.12]{santambrogio2015optimal}, we introduce the notion of differentiability on the space of measures that we utilize throughout the paper.
\begin{definition}
\label{def:fderivative} 
For any $\mathcal{X} \subset \mathbb R^d,$ let $\mathcal{K} \subseteq \mathcal P(\mathcal{X})$ be convex and let $F:\mathcal P(\mathcal{X}) \to \mathbb R.$ We say $F \in \mathfrak{C}^1(\mathcal{K}),$ if there exists a continuous function $\frac{\delta F}{\delta \nu}: \mathcal{K} \times \mathcal{X}\rightarrow \mathbb R$ such that, for any $\nu, \nu' \in \mathcal{K},$ there exists $C>0$ such that, for all $x\in \mathcal{X},$ we have $\left|\frac{\delta F}{\delta \nu}({\nu},x)\right|\leq C,$ and it holds that
\begin{equation*}
\lim_{\varepsilon \to 0}\frac{F(\nu + \varepsilon (\nu'-\nu))-F(\nu)}{\varepsilon}=\int_{\mathcal{X}}\frac{\delta F}{\delta \nu}(\nu,x)\left(\nu'- \nu\right)(\mathrm{d}x).
\end{equation*}
The functional $\frac{\delta F}{\delta \nu}$ is called the flat derivative of $F$ on $\mathcal{K}.$ We note that $\frac{\delta F}{\delta \nu}$ exists up to an additive constant, and thus we make the normalizing convention $\int_{\mathcal{X}} \frac{\delta F}{\delta \nu}(\nu,x) \nu(\mathrm{d}x) = 0.$
\end{definition}
If, for any fixed $x \in \mathcal{X},$ the map $\nu \mapsto \frac{\delta F}{\delta \nu}(\nu,x)$ satisfies Definition \ref{def:fderivative}, we say $F \in \mathfrak{C}^2(\mathcal{K}),$ i.e., it admits a second-order flat derivative denoted by $\frac{\delta^2 F}{\delta \nu^2}.$ Consequently, by Definition \ref{def:fderivative}, there exists a continuous function $\frac{\delta^2 F}{\delta \nu^2}: \mathcal{K} \times\mathcal{X} \times\mathcal{X} \rightarrow \mathbb R$ such that
\begin{equation*}
\label{def:2FlatDerivative}
\lim_{\varepsilon \to 0}\frac{1}{\varepsilon}\left(\frac{\delta F}{\delta \nu}(\nu + \varepsilon (\nu'-\nu),x)-\frac{\delta F}{\delta \nu}(\nu,x)\right) = \int_{\mathcal{X}}\frac{\delta^2 F}{\delta \nu^2}(\nu,x, x')\left(\nu'- \nu\right)(\mathrm{d}x').
\end{equation*}
\begin{remark}
\label{rmk:FTC}
One can show that if $F:\mathcal{P}(\mathbb R^d) \to \mathbb R$ admits a flat derivative $\frac{\delta F}{\delta \mu},$ then for all $\mu,\mu'\in \mathcal{P}(\mathbb R^d),$ the function $[0,1]\ni \varepsilon \mapsto F(\mu^\varepsilon) $ is
continuous on $[0,1]$ and differentiable on $(0,1)$ with derivative 
$\frac{\mathrm{d}}{\mathrm{d} \varepsilon}F(\mu^\varepsilon) = \int_{\mathbb R^d} \frac{\delta F}{\delta \mu} (\mu^\varepsilon, x) (\mu'-\mu) (\mathrm{d}x)$ (see \cite[Theorem 2.3]{Jourdain2020CentralLT}).
Hence, by the fundamental theorem of calculus, $F(\mu')-F(\mu)=\int_0^1 \int_{\mathbb R^d} \frac{\delta F}{\delta \mu} (\mu^\varepsilon, x) (\mu'-\mu) (\mathrm{d}x)\mathrm{d}\varepsilon,$
provided that $\varepsilon \mapsto \int  \frac{\delta F}{\delta \mu} (\mu^\varepsilon, x) (\mu'-\mu)(\mathrm{d}x)$ is integrable.
\end{remark}

\section{Differentiability on the dual space}
\label{appendix:differentiability-dual-space}
In this section, we start by recalling the notions of Fréchet and Gâteaux derivative for functions $H:C_b\left(\mathcal{X}\right) \to \mathbb R,$ where $\left(C_b\left(\mathcal{X}\right), \|\cdot\|_{\infty}\right)$ is the Banach space of real-valued bounded continuous functions on $\mathcal{X} \subset \mathbb R^d$; see e.g. Chapters $7,1,3$ in \cite{aliprantis2007infinite,ambrosetti1995primer,ortega1970iterative}, respectively. Based on these notions of differentiablity, we will introduce the notion of first variation for functions $H.$
\subsection{Preliminaries on Fréchet and Gâteaux derivatives}
For $\mathcal{X} \subset \mathbb R^d,$ let $\mathcal{L}\left(C_b\left(\mathcal{X}\right),\mathbb R\right)$ and $\mathcal{L}\left(C_b\left(\mathcal{X}\right)\right)$ denote the space of continuous linear maps from $C_b\left(\mathcal{X}\right)$ to $\mathbb R,$ and from $C_b\left(\mathcal{X}\right)$ to itself, respectively.
\begin{definition}[Fréchet differentiability]
\label{definition:frechet-diff}
Let $\mathcal{U} \subset C_b\left(\mathcal{X}\right)$ be open. Given $f \in \mathcal{U},$ the function $H:\mathcal{U} \to \mathbb R$ is \textit{Fréchet differentiable} at $f$ if there exists $T \in \mathcal{L}(C_b(\mathcal{X}),\mathbb R)$ such that, for all $g \in C_b(\mathcal{X}),$
\begin{equation*}
    \lim_{\left\|g\right\|_{\infty} \to 0} \frac{\left|H\left(f+ g\right) - H(f) - T\left[g\right]\right|}{\left\|g\right\|_{\infty}} = 0.
\end{equation*}
If it exists, the map $T$ is unique, we write $T = \nabla_{\mathcal{F}} H(f),$ and call $\nabla_{\mathcal{F}} H(f)$ the \textit{Fréchet derivative} of $H$ at $f.$ If $H$ is Fréchet differentiable at every $f \in \mathcal{U},$ then we say that $H$ is Fréchet differentiable on $\mathcal{U}.$ 
\end{definition}
\begin{example}[Convex conjugate of the relative entropy]
\label{example:entropy1}
    If $h$ is the relative entropy in Example \ref{example:relative-entropy}, then a straightforward calculation directly from Definition \ref{def: convex-conjugate} shows that its dual $h^*$ is given by
\begin{equation*}
    h^*(f) = \log \left(\int_{\mathcal{X}} e^{f(z)}\pi(\mathrm{d}z)\right).
\end{equation*} 
\end{example}
\begin{example}[Fréchet derivative of the relative entropy]
\label{example:first-Frechet-entropy}
A straightforward calculation directly from Definition \ref{definition:frechet-diff} shows that $h^*$ is Fréchet differentiable on $C_b(\mathcal{X})$ with Fréchet derivative given by 
\begin{equation*}
    \nabla_{\mathcal{F}} h^*(f)[g] = \int_{\mathcal{X}}g(z)\frac{e^{f(z)}}{\int_\mathcal{X}e^{f(y)}\pi(\mathrm{d}y)}\pi(\mathrm{d}z),
\end{equation*}
for all $g \in C_b(\mathcal{X}).$
\end{example}
\begin{example}[Convex conjugate of the $\chi^2$-divergence]
\label{example:chi1}
    If $h$ is the $\chi^2$-divergence in Example \ref{example:chi-squared}, then \cite[Example 7.4]{Polyanskiy_Wu_2025} shows that its dual $h^*$ is given by
\begin{equation*}
    h^*(f) = \frac{1}{2}\int_{\mathcal{X}} f(z) \pi(\mathrm{d}z) + \frac{1}{8}\int_{\mathcal{X}} f^2(z) \pi(\mathrm{d}z).
\end{equation*} 
\end{example}
\begin{example}[Fréchet derivative of the relative $\chi^2$-divergence]
\label{example:first-Frechet-chi}
A straightforward calculation directly from Definition \ref{definition:frechet-diff} shows that $h^*$ is Fréchet differentiable on $C_b(\mathcal{X})$ with Fréchet derivative given by 
\begin{equation*}
    \nabla_{\mathcal{F}} h^*(f)[g] = \frac{1}{2}\int_{\mathcal{X}}g(z)\pi(\mathrm{d}z) + \frac{1}{4}\int_{\mathcal{X}}g(z)f(z)\pi(\mathrm{d}z),
\end{equation*}
for all $g \in C_b(\mathcal{X}).$
\end{example}
\begin{definition}[Gâteaux differentiability]
\label{definition:gateaux-diff}
Let $\mathcal{U} \subset C_b\left(\mathcal{X}\right)$ be open. Given $f \in \mathcal{U},$ the function $H:\mathcal{U} \to \mathbb R$ is \textit{Gâteaux differentiable} at $f$ if there exists $T \in \mathcal{L}(C_b(\mathcal{X}),\mathbb R)$ such that for any direction $f' \in C_b(\mathcal{X}),$
\begin{equation*}
    \lim_{\varepsilon \downarrow 0} \frac{H\left(f+\varepsilon f'\right) - H\left(f\right)}{\varepsilon} = T\left[f'\right].
\end{equation*}
If it exists, the map $T$ is unique, we write $T = \nabla_{\mathcal{G}} H(f),$ and call $\nabla_{\mathcal{G}} H(f)$ the \textit{Gâteaux derivative} of $H$ at $f.$ If $H$ is Gâteaux differentiable at every $f \in \mathcal{U},$ then we say that $H$ is Gâteaux differentiable on $\mathcal{U}.$ 
\end{definition}

As observed in Chapter $1,3$ in \cite{ambrosetti1995primer, ortega1970iterative}, if $H$ is Fréchet differentiable, then it is automatically Gâteaux differentiable and the two derivatives coincide, i.e., $ \nabla_{\mathcal{F}} H =  \nabla_{\mathcal{G}} H.$ Moreover, \cite[Proposition $3.1.6$]{ortega1970iterative} proves that Fréchet differentiability of $H$ at $f \in \mathcal{U}$ implies that $H$ is continuous at $f,$ whereas in the case of Gâteaux differentiability, this does not necessarily hold; see \cite[Proposition $3.1.4$]{ortega1970iterative}.

Following the discussions in \cite{aliprantis2007infinite,ambrosetti1995primer,ortega1970iterative}, it is possible to extend Definition \ref{definition:frechet-diff} to higher-order Fréchet derivatives.  
\begin{definition}[Second-order Fréchet differentiability]
\label{def:second-order-frechet}
Let $\mathcal{U} \subset C_b\left(\mathcal{X}\right)$ be open and let $f \in \mathcal{U}.$ Suppose that $H:\mathcal{U} \to \mathbb R$ is \textit{Fréchet differentiable} (cf. Definition \ref{definition:frechet-diff}) at $f,$ and admits Fréchet derivative $\nabla_{\mathcal{F}}H(f).$ Then $\nabla_{\mathcal{F}}H(f)$ is \textit{Fréchet differentiable} at $f,$ if there exists $T \in \mathcal{L}\left(C_b(\mathcal{X}), \mathcal{L}\left(C_b(\mathcal{X}),\mathbb R\right)\right)$ such that for all $f',f'' \in C_b(\mathcal{X}),$
\begin{equation*}
    \lim_{\left\|f''\right\|_{\infty} \to 0} \frac{\left|\nabla_{\mathcal{F}}H\left(f+f''\right)[f'] - \nabla_{\mathcal{F}}H(f)[f'] - T\left[f''\right]\left[f'\right]\right|}{\left\|f''\right\|_{\infty}} = 0.
\end{equation*}
If it exists, the map $T$ is unique, we write $T = \nabla^2_{\mathcal{F}} H(f),$ and call $\nabla^2_{\mathcal{F}} H(f)$ the \textit{second Fréchet derivative} of $H$ at $f.$
\end{definition}
\begin{example}[Second-order Fréchet derivative of the relative entropy]
\label{example:entropy2}
    If $h$ is the relative entropy in Example \ref{example:relative-entropy}, using Example \ref{example:first-Frechet-entropy}, we can show that $\nabla_{\mathcal{F}} h^*(f)$ is Fréchet differentiable on $C_b\left(\mathcal{X}\right)$ with Fréchet derivative given by
    \begin{align*}
        \nabla^2_{\mathcal{F}} h^*(f)[g'][g] &= \int_{\mathcal{X}}g(x)g'(x)\varphi(f)(\mathrm{d}x) - \left( \int_{\mathcal{X}} g'(z)\varphi(f)(\mathrm{d}z)\right)\left(\int_{\mathcal{X}}g(z)\varphi(f)(\mathrm{d}z)\right)\\
        &=\operatorname{Cov}_{\varphi(f)}\left(g',g\right),
    \end{align*}
    for all $g,g' \in C_b(\mathcal{X}),$ where
    \begin{equation*}
        \varphi(f)(\mathrm{d}x) \coloneqq\frac{e^{f(x)}}{\int_\mathcal{X}e^{f(y)}\pi(\mathrm{d}y)}\pi(\mathrm{d}x)
    \end{equation*}
    If $g'=g,$ then
    \begin{equation*}
        \nabla^2_{\mathcal{F}} h^*(f)[g,g] = \operatorname{Var}_{\varphi(f)}\left(g\right).
    \end{equation*}
\end{example}
\begin{example}[Second-order Fréchet derivative of the $\chi^2$-divergence]
\label{example:chi2}
    If $h$ is the $\chi^2$-divergence in Example \ref{example:chi-squared}, using Example \ref{example:first-Frechet-chi}, we can show that $\nabla_{\mathcal{F}} h^*(f)$ is Fréchet differentiable on $C_b\left(\mathcal{X}\right)$ with Fréchet derivative given by
    \begin{equation*}
        \nabla^2_{\mathcal{F}} h^*(f)[g'][g] = \frac{1}{4}\int_{\mathcal{X}}g(z)g'(z)\pi(\mathrm{d}z),
    \end{equation*}
    for all $g,g' \in C_b(\mathcal{X}).$
\end{example}

\begin{definition}[Third-order Fréchet differentiability]
\label{def:third-Frechet}
Let $\mathcal{U} \subset C_b\left(\mathcal{X}\right)$ be open and let $f \in \mathcal{U}.$ Suppose that $H:\mathcal{U} \to \mathbb R$ is twice \textit{Fréchet differentiable} (cf. Definition \ref{def:second-order-frechet}) at $f,$ and admits second-order Fréchet derivative $\nabla^2_{\mathcal{F}}H(f).$ Then $\nabla^2_{\mathcal{F}}H(f)$ is \textit{Fréchet differentiable} at $f,$ if there exists $T \in \mathcal{L}\left(C_b(\mathcal{X}), \mathcal{L}\left(C_b(\mathcal{X}),\mathcal{L}\left(C_b(\mathcal{X}),\mathbb R\right)\right)\right)$ such that for all $g, g',g'' \in C_b(\mathcal{X}),$
\begin{equation*}
    \lim_{\left\|f''\right\|_{\infty} \to 0} \frac{\left|\nabla_{\mathcal{F}}H\left(f+g''\right)[g'][g] - \nabla_{\mathcal{F}}H(f)[g'][g] - T\left[g''\right]\left[g'\right]\left[g\right]\right|}{\left\|g''\right\|_{\infty}} = 0.
\end{equation*}
If it exists, the map $T$ is unique, we write $T = \nabla^3_{\mathcal{F}} H(f),$ and call $\nabla^3_{\mathcal{F}} H(f)$ the \textit{third Fréchet derivative} of $H$ at $f.$
\end{definition}
\begin{example}[Third-order Fréchet derivative of the relative entropy]
\label{example:entropy3}
    If $h$ is the relative entropy in Example \ref{example:relative-entropy}, using Example \ref{example:entropy2}, we can show that $\nabla^2_{\mathcal{F}} h^*(f)$ is Fréchet differentiable on $C_b\left(\mathcal{X}\right)$ with Fréchet derivative $\nabla^3_{\mathcal{F}} h^*(f).$ We differentiate the variance $\operatorname{Var}_{\varphi(f)}(g)$ with respect to $f$ in a direction $g'.$ Using the identity $$\frac{\mathrm{d}}{\mathrm{d}\varepsilon}\Bigg|_{\varepsilon=0}\left(\int_{\mathcal{X}} G(x) \varphi(f+\varepsilon g')(\mathrm{d}x)\right) = \operatorname{Cov}_{\varphi(f)}(G,g'),$$ we obtain
    \begin{align*}
        \nabla^3_{\mathcal{F}} h^*(f)[g,g][g'] &= \frac{\mathrm{d}}{\mathrm{d}\varepsilon}\Bigg|_{\varepsilon=0}\left(\int_{\mathcal{X}} g(x)^2 \varphi(f+\varepsilon g')(\mathrm{d}x) - \left(\int_{\mathcal{X}} g(x) \varphi(f+\varepsilon g')(\mathrm{d}x)\right)^2\right)\\
        &=\operatorname{Cov}_{\varphi(f)}(g^2,g') - 2\operatorname{Cov}_{\varphi(f)}(g,g') \int_\mathcal{X}g(x)\varphi(f)(\mathrm{d}x),
    \end{align*}
    for all $g,g' \in C_b(\mathcal{X}).$ If $g'=g,$ then
    \begin{equation*}
        \nabla^3_{\mathcal{F}} h^*(f)[g,g,g] = \int_{\mathcal{X}} \left(g(x)- \int_{\mathcal{X}} g(y) \varphi(f)(\mathrm{d}y)\right)^3\varphi(f)(\mathrm{d}x).
    \end{equation*}
\end{example}
\begin{example}[Third-order Fréchet derivative of the $\chi^2$-divergence]
\label{example:chi3}
    If $h$ is the $\chi^2$-divergence in Example \ref{example:chi-squared}, using Example \ref{example:chi2}, we observe that since $\nabla^2_{\mathcal{F}} h^*(f)$ is independent of $f,$ the third-order Fréchet derivative of $h^*$ is given by
    \begin{equation*}
        \nabla^3_{\mathcal{F}} h^*(f)[g,g,g] = 0,
    \end{equation*}
    for all $g \in C_b(\mathcal{X}).$
\end{example}
The motivation behind working with Fréchet instead of Gâteaux differentiability is that the higher-order derivatives in the case of the former could be identified with continuous symmetric multilinear maps. As proved in Section $3$ of Chapter $1$ from \cite{ambrosetti1995primer}, the space $\mathcal{L}\left(C_b(\mathcal{X}), \mathcal{L}\left(C_b(\mathcal{X}),\mathcal{L}\left(C_b(\mathcal{X}),\mathbb R\right)\right)\right)$ is isometrically isomorphic to $\mathcal{L}_3\left(C_b(\mathcal{X}), \mathbb R\right),$ i.e., the space of continuous trilinear maps from $C_b(\mathcal{X}) \times C_b(\mathcal{X}) \times C_b(\mathcal{X})$ to $\mathbb R,$ and therefore, we could naturally view the third-order Fréchet derivative of $H,$ if it exists, as a continuous trilinear map. Furthermore, due to \cite[Theorem $3.5$]{ambrosetti1995primer}, we have that the third-order Fréchet derivative is always symmetric. On the contrary, the second-order Gâteaux derivative is not necessarily symmetric as noted on page $78$ in \cite{ortega1970iterative}. 
\begin{remark}
    If we replace $C_b(\mathcal{X})$ with $\mathbb R^d$, then the first and second-order Fréchet derivatives are precisely the gradient and Hessian matrix of $H$ at $f.$
\end{remark}
 \begin{proposition}[Verification of Assumption \ref{assumption:lipschitz-h^*2} for the relative entropy in Example \ref{example:relative-entropy}]
\label{eq:verification-assumption}
    For $h$ being the relative entropy in Example \ref{example:relative-entropy}, its third Frechet derivative in Example \ref{example:entropy3} satisfies Assumption \ref{assumption:lipschitz-h^*2}. 
\end{proposition}
\begin{proof}
    Let $g \in C_b(\mathcal{X}).$ Recall that
    \begin{equation*}
        \nabla^3_{\mathcal{F}} h^*(f)[g,g,g] = \int_{\mathcal{X}} \left(g(x)- \int_{\mathcal{X}} g(y) \varphi(f)(\mathrm{d}y)\right)^3\varphi(f)(\mathrm{d}x).
    \end{equation*}
    Note that since $\varphi(f) \in \mathcal{P}(\mathcal{X})$ we have
    \begin{equation*}
        \left|g(x)- \int_{\mathcal{X}} g(y) \varphi(f)(\mathrm{d}y)\right| \leq 2\|g\|_\infty,
    \end{equation*}
    and hence
    \begin{equation*}
        |\nabla^3_{\mathcal{F}} h^*(f)[g,g,g]| \leq 8\|g\|_\infty^3.
    \end{equation*}
    Using the fact that
    \begin{equation*}
        \left\|\nabla^3_{\mathcal{F}} h^*(f)\right\|_{\mathcal{L}_3\left(C_b(\mathcal{X}), \mathbb R\right)} \coloneqq \sup_{\|g\|_\infty = 1}\frac{|\nabla^3_{\mathcal{F}} h^*(f)[g,g,g]|}{\|g\|_\infty^3},
    \end{equation*}
    we conclude that
    \begin{equation*}
        \left\|\nabla^3_{\mathcal{F}} h^*(f)\right\|_{\mathcal{L}_3\left(C_b(\mathcal{X}), \mathbb R\right)} \leq 8,
    \end{equation*}
    for all $f \in C_b(\mathcal{X}).$
 \end{proof}
 \begin{example}[Why self-concordance may not hold for the entropy mirror map]
 \label{example:self-concordance-fail}
    When \(h\) is the relative entropy in Example \ref{example:relative-entropy}, \(\nabla^2_{\mathcal F} h^*(f)[g,g]=\mathrm{Var}_{\varphi(f)}(g)\) and \(\nabla^3_{\mathcal F} h^*(f)[g,g,g]\) is the centered third moment of \(g\) under \(\varphi(f)\), i.e.,
    \begin{equation*}
        \nabla^3_{\mathcal{F}} h^*(f)[g,g,g] = \int_{\mathcal{X}} \left(g(x)- \int_{\mathcal{X}} g(y) \varphi(f)(\mathrm{d}y)\right)^3\varphi(f)(\mathrm{d}x).
    \end{equation*}
     In this case, self-concordance can fail globally because the normalized skewness
     \begin{equation*}
         \frac{\int_{\mathcal{X}} \left(g(x)- \int_{\mathcal{X}} g(y) \varphi(f)(\mathrm{d}y)\right)^3\varphi(f)(\mathrm{d}x)}{(\mathrm{Var}_{\varphi(f)}(g))^{3/2}}
     \end{equation*}
can be arbitrarily large as \(\varphi(f)\) becomes highly concentrated.
\end{example}
 \begin{proposition}[Verification of Assumption \ref{assumption:lipschitz-h^*2} for the relative $\chi^2$-divergence in Example \ref{example:chi-squared}]
\label{eq:verification-assumption-chi}
    For $h$ being the $\chi^2$-divergence in Example \ref{example:chi-squared}, its third Frechet derivative in Example \ref{example:chi3} satisfies Assumption \ref{assumption:lipschitz-h^*2}. 
\end{proposition}
\begin{proof}
    Recall that \begin{equation*}
        \nabla^3_{\mathcal{F}} h^*(f)[g,g,g] = 0,
    \end{equation*}
    hence the conclusion is immediate.
\end{proof}
\subsection{First variation}
Following Chapter $2$ from \cite{abraham2012manifolds}, we introduce the notion of first variation for Fréchet differentiable functions $H,$ relative to the duality pairing \eqref{eq:duality-pairing}.
\begin{definition}[First variation of $H$]
\label{def:first-variation}
    Let $H:C_b(\mathcal{X}) \to \mathbb R$ be Fréchet differentiable at $f \in C_b(\mathcal{X}).$ If it exists, the \textit{first variation} of $H$ at $f$ is the element $\frac{\delta H}{\delta f}(f) \in \mathcal{M}(\mathcal{X})$ such that, for all $g \in C_b(\mathcal{X}),$
    \begin{equation*}
        \left\langle g, \frac{\delta H}{\delta f}(f)\right\rangle \coloneqq \nabla_{\mathcal{F}}H(f)[g].
    \end{equation*}
\end{definition}
\begin{example}[First variation of the dual of relative entropy]
\label{example:entropy-dual-first-var}
    From Example \ref{example:entropy1}, we observe that the first variation $\frac{\delta h^*}{\delta f}(f) \in \mathcal{P}(\mathcal{X}) \subset \mathcal{M}(\mathcal{X})$ of $h^*$ at $f$ is given by
\begin{equation*}
    \frac{\delta h^*}{\delta f}(f)(\mathrm{d}z) = \varphi(f)(\mathrm{d}z).
\end{equation*}
\end{example}
\begin{example}[First variation of the dual of $\chi^2$-divergence]
\label{example:chi-dual-first-var}
    From Example \ref{example:chi1}, we observe that the first variation $\frac{\delta h^*}{\delta f} \in \mathcal{M}(\mathcal{X})$ of $h^*$ at $f$ is given by
\begin{equation*}
    \frac{\delta h^*}{\delta f}(f)(\mathrm{d}z) = \left(1+\frac{1}{2}f(z)\right)\pi(\mathrm{d}z).
\end{equation*}
\end{example}
Assuming that $H:C_b(\mathcal{X}) \to \mathbb R$ is Fréchet differentiable at $f \in C_b(\mathcal{X})$ with Fréchet derivative $\nabla_{\mathcal{F}}H(f),$ then it is Gâteaux differentiable (cf. Definition \ref{definition:gateaux-diff}) with the same derivative, and therefore the first variation of $H$ at $f$ can be characterized as
\begin{equation}
\label{eq:first-variation}
    \left\langle g, \frac{\delta H}{\delta f}(f)\right\rangle = \lim_{\varepsilon \downarrow 0} \frac{1}{\varepsilon}\left(H\left(f+\varepsilon g\right) - H\left(f\right)\right),
\end{equation}
for all $g \in C_b(\mathcal{X}).$

With the definition of first variation at hand, we can introduce necessary and sufficient conditions for $H$ to have an extremum at $f \in C_b(\mathcal{X}).$
\begin{lemma}[Necessary first-order condition on $C_b(\mathcal{X})$]
\label{lemma:FOC-cts-functions}
    Suppose $H:C_b(\mathcal{X}) \to \mathbb R$ admits first variation at $f.$ If $H$ has an extremum at $f^*,$ then it holds that
    \begin{equation*}
        \frac{\delta H}{\delta f}(f^*) = 0.
    \end{equation*}
\end{lemma}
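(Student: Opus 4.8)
The plan is to reduce the statement to the elementary one-dimensional fact that a real function which attains an interior extremum and is one-sidedly differentiable there has vanishing one-sided derivatives, applied along every direction $g\in B_b(\mathcal{X})$, and then to upgrade the resulting ``directional derivatives vanish for all $g$'' to the vanishing of the measure $\frac{\delta H}{\delta f}(f)$ itself.

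First I would fix an arbitrary direction $g\in B_b(\mathcal{X})$ and consider the real-valued function $\phi_g(\varepsilon)\coloneqq H(f+\varepsilon g)$, which is well defined for all $\varepsilon\in\mathbb{R}$ since $B_b(\mathcal{X})$ is a vector space; in particular $f$ is automatically an interior point of the domain, so there is no boundary obstruction. Because $H$ admits first variation at $f$, the characterization \eqref{eq:first-variation} gives that the right derivative of $\phi_g$ at $0$ exists and equals $\left\langle g,\frac{\delta H}{\delta f}(f)\right\rangle$; applying the same characterization with $-g$ in place of $g$ and using that $f+\varepsilon(-g)=f-\varepsilon g$ together with the linearity of the pairing \eqref{eq:duality-pairing} in its first argument, the right derivative of $\phi_{-g}$ at $0$ exists and equals $-\left\langle g,\frac{\delta H}{\delta f}(f)\right\rangle$. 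Equivalently, both one-sided derivatives of $\phi_g$ at $0$ exist and equal $\left\langle g,\frac{\delta H}{\delta f}(f)\right\rangle$.

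Next, suppose without loss of generality that $H$ has a local maximum at $f$ (the minimum case is identical with all inequalities reversed). Then for all sufficiently small $\varepsilon>0$ we have $\phi_g(\varepsilon)-\phi_g(0)\le 0$ and $\phi_{-g}(\varepsilon)-\phi_{-g}(0)\le 0$; dividing by $\varepsilon$ and letting $\varepsilon\downarrow 0$ yields $\left\langle g,\frac{\delta H}{\delta f}(f)\right\rangle\le 0$ and $-\left\langle g,\frac{\delta H}{\delta f}(f)\right\rangle\le 0$, hence $\left\langle g,\frac{\delta H}{\delta f}(f)\right\rangle=0$. Since $g\in B_b(\mathcal{X})$ was arbitrary, this identity holds for every bounded measurable function; in particular, taking $g=\mathbf{1}_A$ for an arbitrary measurable set $A\subseteq\mathcal{X}$ gives $\frac{\delta H}{\delta f}(f)(A)=0$, and therefore $\frac{\delta H}{\delta f}(f)=0$ as an element of $\mathcal{M}(\mathcal{X})$, which is the claim.

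I do not expect a genuine obstacle here; the proof is essentially the infinite-dimensional Fermat rule. The only points requiring a little care are: (i) that the extremum is an interior one, which is automatic because the optimization is over the whole space $B_b(\mathcal{X})$; (ii) that $H$ is only guaranteed to be one-sidedly (Gâteaux) differentiable through \eqref{eq:first-variation}, so one must argue with the right derivatives in the two opposite directions $g$ and $-g$ rather than with a genuine two-sided derivative; and (iii) the passage from $\int_{\mathcal{X}} g\,\mathrm{d}\!\left(\frac{\delta H}{\delta f}(f)\right)=0$ for all $g\in B_b(\mathcal{X})$ to the vanishing of the measure, for which testing against indicators of measurable sets suffices.
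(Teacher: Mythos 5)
Your proof is correct and is the standard infinite-dimensional Fermat rule: the paper does not prove this lemma itself but only cites \cite[Proposition $2.4.22$]{abraham2012manifolds}, whose content is exactly the directional argument you give, and your final step of testing against indicators $\mathbf{1}_A$ to upgrade $\left\langle g, \frac{\delta H}{\delta f}(f)\right\rangle = 0$ for all $g \in B_b(\mathcal{X})$ to the vanishing of the measure is a correct and worthwhile addition. The only cosmetic caveat is that Definition \ref{def:first-variation} declares the first variation to be $\mathcal{P}(\mathcal{X})$-valued, so the conclusion $\frac{\delta H}{\delta f}(f)=0$ only makes sense if one reads the first variation as an element of $\mathcal{M}(\mathcal{X})$, which is what you implicitly (and reasonably) do.
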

\begin{proof}
    For a proof, see \cite[Proposition $2.4.22$]{abraham2012manifolds}.
\end{proof}
\begin{lemma}[Sufficient first-order condition on $C_b(\mathcal{X})$]
\label{lemma:suff-FOC-cts-functions}
Let $\mathcal{U} \subset C_b(\mathcal{X})$ be non-empty and convex. Suppose that $H:\mathcal{U} \to \mathbb R$ admits first variation on $\mathcal{U}$ and is convex in the sense that, for all $\lambda \in [0,1],$ and all $f,g \in \mathcal{U},$ it holds that $H\left((1-\lambda)f + \lambda g\right) \leq (1-\lambda)H(f) + \lambda H(g).$ If $\frac{\delta H}{\delta f}(f^*) = 0$, for some $f^* \in \mathcal{U},$ then $f^*$ is a global minimum of $H.$
\end{lemma}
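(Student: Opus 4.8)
The plan is to run the classical convexity argument — a convex function whose derivative vanishes at a point attains its minimum there — in the language of first variations and the duality pairing \eqref{eq:duality-pairing}. Fix an arbitrary $g \in \mathcal{U}$. Since $\mathcal{U}$ is convex, the segment $f^{\lambda} := (1-\lambda)f^* + \lambda g = f^* + \lambda(g - f^*)$ lies in $\mathcal{U}$ for every $\lambda \in [0,1]$, and $g - f^* \in B_b(\mathcal{X})$ is an admissible direction along which the first variation of $H$ at $f^*$ may be evaluated.

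First I would use convexity of $H$ to obtain, for each $\lambda \in (0,1]$,
\begin{equation*}
    \frac{H(f^{\lambda}) - H(f^*)}{\lambda} \le H(g) - H(f^*).
\end{equation*}
Then I would let $\lambda \downarrow 0$. By the characterization \eqref{eq:first-variation} of the first variation at $f^*$ in the direction $g - f^*$, the left-hand side converges to $\left\langle g - f^*, \frac{\delta H}{\delta f}(f^*)\right\rangle$, which equals $0$ by the hypothesis $\frac{\delta H}{\delta f}(f^*) = 0$. (Alternatively, convexity of $H$ makes $\lambda \mapsto (H(f^{\lambda}) - H(f^*))/\lambda$ non-decreasing on $(0,1]$, so the limit exists and coincides with this one-sided directional derivative.) Hence $0 \le H(g) - H(f^*)$, i.e. $H(f^*) \le H(g)$. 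Since $g \in \mathcal{U}$ was arbitrary, $f^*$ is a global minimum of $H$, as claimed.

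There is no genuine obstacle here; the proof is only a few lines. The two minor points requiring care are: (i) that the difference quotient above indeed converges to the first variation — justified either by \eqref{eq:first-variation} directly, or self-containedly by the monotonicity just noted together with the finiteness of the first variation built into the hypotheses; and (ii) that $\frac{\delta H}{\delta f}(f^*) = 0$ is read as the zero element of $\mathcal{M}(\mathcal{X})$, consistently with Lemma~\ref{lemma:FOC-cts-functions}, so that the pairing $\left\langle g - f^*, \frac{\delta H}{\delta f}(f^*)\right\rangle$ vanishes. One could also simply invoke the standard statement in \cite[Chapter~2]{abraham2012manifolds}.
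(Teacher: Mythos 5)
Your proof is correct and is essentially the paper's argument: the paper invokes the first-order convexity inequality $H(g) \geq H(f) + \langle g-f, \tfrac{\delta H}{\delta f}(f)\rangle$ (citing \cite[Lemma 4.1]{10.1214/20-AIHP1140}) and sets $f = f^*$, while you simply unroll the standard difference-quotient proof of that inequality in the special case $f = f^*$, using convexity of $\mathcal{U}$ to keep the segment $f^{\lambda}$ admissible and \eqref{eq:first-variation} to identify the limit. No gap; the two write-ups differ only in whether the intermediate inequality is cited or derived.
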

\begin{remark}
    An analogous result can be identically proved for concave functions and global maxima, so we will give the proof only for the convex case.
\end{remark}
\begin{proof}
    Since $H$ is convex and admits first variation, following the argument in Lemma \ref{lemma:strong-convexity-h}, it can be showed that for any $f,g \in \mathcal{U}$
    \begin{equation*}
        H(g) \geq H(f) + \left\langle g-f, \frac{\delta H}{\delta f}(f)\right\rangle.
    \end{equation*}
    For $f=f^*$ and using the assumption that $\frac{\delta H}{\delta f}(f^*) = 0,$ we get
    \begin{equation*}
            H(g) \geq H(f^*),
    \end{equation*}
    for all $g \in \mathcal{U},$ i.e., $f^*$ is a global minimum.
\end{proof}

\section{Technical results on duality}
\label{appendix:technical-duality}
In this section we state and prove some technical results which are central to the proof technique via dual Bregman divergence that we developed in Subsection \ref{subsect:alternating}.
\begin{proposition}
\label{prop:first-var-argmax}
   Let Assumption \ref{assumption:assump-h} hold. Let $h^*:C_b(\mathcal{X}) \to \mathbb R$ be the convex conjugate of $h.$ Then, the following are equivalent:
   \begin{enumerate}
       \item The supremum of $\mathcal{M}(\mathcal{X}) \ni m \mapsto \langle g^*, m \rangle - h(m) \in \mathbb R$ is attained at $m = m^*$.
       \item We have the first-order condition $g^*(x) - \frac{\delta h}{\delta m}(m^*, x) = 0$, for all $x \in \mathcal{X}$, $m^*$-a.e.
       \item The supremum of $C_b(\mathcal{X}) \ni  g \mapsto \langle g, m^* \rangle - h^*(g) \in \mathbb R$ is attained at $g = g^*$.
       \item It holds that $m^* = \frac{\delta h^*}{\delta g}(g^*)$.
   \end{enumerate}
\end{proposition}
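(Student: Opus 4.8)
The plan is to split the four-way equivalence into a primal first-order characterization (1) $\Leftrightarrow$ (2), a dual first-order characterization (3) $\Leftrightarrow$ (4), and a Fenchel-duality bridge linking the two, (1) $\Leftrightarrow$ (3). The tools are: the convexity/first-variation inequality of \cite[Lemma $4.1$]{10.1214/20-AIHP1140}, which applies to $h$ on $\mathcal{E}$ (Assumption \ref{assumption:assump-h}) and to $h^*$ on $V$ (Assumption \ref{assump:h^*-finite}); the necessary and sufficient first-order conditions of Lemmas \ref{lemma:FOC-cts-functions} and \ref{lemma:suff-FOC-cts-functions}; and the Fenchel--Young inequality built into Definition \ref{def: convex-conjugate}.

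For (1) $\Leftrightarrow$ (2): the map $\Phi(m) := \langle g^*, m\rangle - h(m)$ is concave on $\mathcal{E}$ with flat derivative $\frac{\delta\Phi}{\delta m}(m,x) = g^*(x) - \frac{\delta h}{\delta m}(m,x)$. The necessary first-order optimality condition on the space of probability measures (perturb $m^* \mapsto m^* + \varepsilon(m-m^*)$ and let $\varepsilon \downarrow 0$) forces $\frac{\delta\Phi}{\delta m}(m^*,\cdot)$ to be Lebesgue-a.e.\ constant, i.e.\ (2); conversely, if (2) holds then concavity of $\Phi$ together with \cite[Lemma $4.1$]{10.1214/20-AIHP1140} gives $\Phi(m) \le \Phi(m^*) + \int_{\mathcal{X}} \frac{\delta\Phi}{\delta m}(m^*,x)(m-m^*)(\mathrm{d}x) = \Phi(m^*)$ for every $m \in \mathcal{E}$ (the integral vanishes since the integrand is constant and $m,m^*$ are probability measures), which is (1). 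The equivalence (3) $\Leftrightarrow$ (4) is the same argument transported to $B_b(\mathcal{X})$: $\Psi(g) := \langle g, m^*\rangle - h^*(g)$ is concave on $V$, and its first-order condition reads $m^* = \frac{\delta h^*}{\delta g}(g^*)$ (the linear term contributes $m^*$, cf.\ \eqref{eq:first-variation}); Lemma \ref{lemma:FOC-cts-functions} gives (3) $\Rightarrow$ (4), and the concave analogue of Lemma \ref{lemma:suff-FOC-cts-functions}, using convexity of $h^*$, gives (4) $\Rightarrow$ (3).

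It remains to bridge the two pictures. The implication (1) $\Rightarrow$ (3) is immediate: (1) means $h^*(g^*) = \langle g^*, m^*\rangle - h(m^*)$, while Definition \ref{def: convex-conjugate} and $m^* \in \mathcal{P}(\mathcal{X})$ give $\langle g, m^*\rangle - h^*(g) \le h(m^*) = \langle g^*, m^*\rangle - h^*(g^*)$ for every $g \in V$, so the supremum in (3) is attained at $g^*$. The reverse direction is the one genuine obstacle: from (3), equivalently (4), one must recover that $m^*$ actually maximizes $\langle g^*,\cdot\rangle - h$. I would close this either (a) via biconjugation --- $h$ is convex and lower semi-continuous on $\mathcal{E}$ by Assumption \ref{assumption:assump-h}, so $h = h^{**}$ there, and (3) reads $\langle g^*, m^*\rangle - h^*(g^*) = \sup_{g\in V}\{\langle g, m^*\rangle - h^*(g)\} = h^{**}(m^*) = h(m^*)$, which is (1) --- or (b) via an envelope (Danskin) argument: since $h$ is strictly convex (Assumption \ref{assumption:str-conv-h}) the maximizer $\bar m$ of $\langle g^*,\cdot\rangle - h$ is unique, the right-derivative of $\varepsilon \mapsto h^*(g^* + \varepsilon\phi) = \sup_m\{\langle g^*,m\rangle + \varepsilon\langle\phi,m\rangle - h(m)\}$ at $\varepsilon = 0$ equals $\langle\phi,\bar m\rangle$, and comparing with \eqref{eq:first-variation} and (4) forces $\langle\phi, m^*\rangle = \langle\phi,\bar m\rangle$ for all $\phi \in B_b(\mathcal{X})$, hence $m^* = \bar m$. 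The delicate point in either route is that we work in the non-reflexive pairing $\langle B_b(\mathcal{X}), \mathcal{M}(\mathcal{X})\rangle$ --- justifying the biconjugation identity, or the attainment of the suprema and the continuity needed for the envelope argument --- and this is exactly where strict convexity of $h$ (and, for attainment, a compactness input such as $\operatorname{diam}(\mathcal{X}) < \infty$) is used. As a by-product, Corollary \ref{corollary:first-var-argmax} follows from ``(4) $\Leftrightarrow$ (1)'' together with the uniqueness of the maximizer granted by strict convexity.
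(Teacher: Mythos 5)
Your proposal follows essentially the same route as the paper: the same decomposition into (1)$\Leftrightarrow$(2) via first-order conditions on the space of measures, (3)$\Leftrightarrow$(4) via Lemmas \ref{lemma:FOC-cts-functions} and \ref{lemma:suff-FOC-cts-functions}, and a Fenchel-duality bridge (1)$\Leftrightarrow$(3). The one genuine divergence is in the bridge: the paper invokes the Fenchel--Moreau theorem (\cite[Theorem $2.3.3$]{zalinescu2002convex}, with $\mathcal{P}(\mathcal{X})$ under the TV metric) for \emph{both} directions of (1)$\Leftrightarrow$(3), whereas you prove (1)$\Rightarrow$(3) directly from the Fenchel--Young inequality built into Definition \ref{def: convex-conjugate} --- which is more elementary and avoids biconjugation for that direction --- and reserve biconjugation (or, alternatively, a Danskin-type envelope argument) for (3)$\Rightarrow$(1) only. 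You also explicitly flag the delicate point that the supremum in (3) runs over $V$ while the biconjugate is a supremum over all of $B_b(\mathcal{X})$, a gap in the non-reflexive pairing that the paper's appeal to Fenchel--Moreau passes over silently; neither treatment fully closes it, so this is not a defect of your argument relative to the paper's, but it is worth your having named it. Your sketch of (1)$\Leftrightarrow$(2) by direct perturbation reproduces what the paper outsources to \cite[Proposition $2.5$]{10.1214/20-AIHP1140} and its converse, and is correct as stated.
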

\begin{proof}
$(1) \implies (2)$: Suppose that $(1)$ holds. Then the supremum of $m \mapsto \langle g^*, m \rangle - h(m)$ is attained at the maximizer $m^* = \argmax_{m \in \mathcal{M}(\mathcal{X})}\left\{\langle g^*, m \rangle - h(m)\right\}.$ Hence, 
\begin{equation*}
    \langle g^*, m^*-m \rangle - \left(h(m^*)-h(m)\right)  \geq 0,
\end{equation*}
for all $m \in \mathcal{M}(\mathcal{X})$. Let $\Tilde{m} \in  \mathcal{M}(\mathcal{X})$ and set $m \coloneqq m^* + t(\Tilde{m}-m^*)$, for $t \in [0,1]$. Then
\begin{equation*}
    -t\langle g^*, \Tilde{m}-m^* \rangle + \left(h( m^* + t(\Tilde{m}-m^*))- h(m^*)\right) \geq 0.
\end{equation*}
Dividing by $t$ and letting $t \searrow 0$ gives
\begin{equation*}
    -\langle g^*, \Tilde{m}-m^* \rangle + \int_\mathcal{X} \frac{\delta h}{\delta m}(m^*,x)(\Tilde{m}-m^*)(\mathrm{d}x) \geq 0,
\end{equation*}
or equivalently
\begin{equation*}
    \left\langle -g^* + \frac{\delta h}{\delta m}(m^*,\cdot), \Tilde{m}-m^* \right\rangle \geq 0.
\end{equation*}
Since $\Tilde{m}$ is arbitrary, $m^*$ satisfies the first-order condition
\begin{equation*}
        g^*(x) - \frac{\delta h}{\delta m}(m^*, x) = 0,
    \end{equation*}
    for all $x \in \mathcal{X},$ $m^*$-a.e.

    $(2) \implies (1)$: Suppose that $(2)$ holds. Observe that the map $m \mapsto \langle g^*, m \rangle - h(m)$ is strictly concave due to the strict convexity of $h$ and the linearity of $m \mapsto \langle g^*, m \rangle.$ Therefore, $m^*$ is the maximizer of the map $\mathcal{M}(\mathcal{X}) \ni m \mapsto \langle g^*, m \rangle - h(m) \in \mathbb R,$ and so $(1)$ holds.

    $(3) \implies (4)$: Suppose that $(3)$ holds. Then the supremum in $g \mapsto \langle g, m^* \rangle - h^*(g)$ is attained at a maximizer $g^* \in \argmax_{g \in C_b(\mathcal{X})}\left\{\langle g, m^* \rangle - h^*(g)\right\}.$ Hence, by Lemma \ref{lemma:FOC-cts-functions}, it follows that $g^*$ satisfies the first-order condition
    \begin{equation*}
        m^* = \frac{\delta h}{\delta g}(g^*).
    \end{equation*}
    
    $(4) \implies (3)$: Suppose that $(4)$ holds. Observe that $C_b(\mathcal{X})$ is convex and the map $g \mapsto \langle g, m^* \rangle - h^*(g)$ is concave due to the convexity of $h^*$ and the linearity of $g \mapsto \langle g, m^* \rangle.$ Hence, by Lemma \ref{lemma:suff-FOC-cts-functions}, it follows that $g^*$ is a maximizer of the map $C_b(\mathcal{X}) \ni  g \mapsto \langle g, m^* \rangle - h^*(g) \in \mathbb R,$ and so $(3)$ holds. 
    
    $(1) \implies (3)$: Suppose that $(1)$ holds. Then, by Definition \ref{def: convex-conjugate}, we have that
    $h^*(g) = \langle g, m^* \rangle - h(m^*),$ and equivalently $h(m^*) = \langle g, m^* \rangle - h^*(g).$ Clearly, $\mathcal{M}(\mathcal{X})$ is convex and $\left(\mathcal{M}(\mathcal{X}), \operatorname{TV}\right)$ is Hausdorff since it is a metric space, hence we can apply the Fenchel-Moreau theorem \cite[Theorem $2.3.3$]{zalinescu2002convex} to conclude that $h^{**} = h$, i.e.,  $h(m^*) = \sup_{g \in C_b(\mathcal{X})} \{\langle g, m^* \rangle - h^*(g)\}.$ Therefore, $h(m^*)$ is the supremum of $g \mapsto \langle g, m^* \rangle - h^*(g)$ attained at $g=g^*.$
    
    $(3) \implies (1)$: Suppose $(3)$ holds. Then $h^{**}(m^*) = \langle g^*, m^* \rangle - h^*(g^*),$ or equivalently $h^*(g^*) = \langle g^*, m^* \rangle - h^{**}(m^*).$ Again, by the Fenchel-Moreau theorem \cite[Theorem $2.3.3$]{zalinescu2002convex}, $h^{**}(m) = h(m),$ for all $m \in \mathcal{M}(\mathcal{X}),$ and hence $h^*(g^*) = \langle g^*, m^* \rangle - h(m^*).$ Hence, by Definition \ref{def: convex-conjugate}, the supremum of $m \mapsto \langle g^*, m \rangle - h(m)$ is realized at $m=m^*.$
\end{proof}
\begin{corollary}
\label{corollary:first-var-argmax}
     Let $h^*:C_b(\mathcal{X}) \to \mathbb R$ be the convex conjugate of $h.$ If Assumption \ref{assumption:assump-h} holds and $h^*$ admits the first variation $\frac{\delta h^*}{\delta f}(f)$ (cf. \eqref{eq:first-variation}) on $C_b(\mathcal{X}),$ then
\begin{equation}
\label{eq:first-var-legendre}
    \frac{\delta h^*}{\delta f}(f) = \argmax_{m \in \mathcal{M}(\mathcal{X})}\left\{\langle f, m \rangle - h(m)\right\}.
\end{equation}
\end{corollary}
\begin{remark}[Bregman divergence via first variation]
    Definition \ref{def:dual-Bregman} can be relaxed as follows. Provided that $h^*$ admits a first variation (see Examples \ref{example:entropy-dual-first-var} and \ref{example:chi-dual-first-var}), Corollary \ref{corollary:first-var-argmax} shows that if Assumption \ref{assumption:assump-h} holds, then the first variation $\frac{\delta h^*}{\delta f}(f)$ of $h^*$ at $f$ is the unique maximizer of $m \mapsto \langle f, m \rangle - h(m).$ Consequently, from Definition \ref{def:first-variation}, since $f,f' \in C_b(\mathcal{X})$ and $\frac{\delta h^*}{\delta f}(f) \in \mathcal{M}(\mathcal{X}),$ it follows that $\nabla_\mathcal{F} h^*(f)[f'-f] = \langle f'-f,\frac{\delta h^*}{\delta f}(f)\rangle.$ Moreover, because $h^*$ is Fréchet-convex, $D_{h^*}(f',f) \geq 0,$ for all $f',f \in C_b(\mathcal{X}).$
\end{remark}
\begin{lemma}
\label{lemma:primal-dual-bregman}
Let Assumption \ref{assumption:assump-h} hold. Let $h^*:C_b(\mathcal{X}) \to \mathbb R$ be the convex conjugate of $h.$ Fix $f,g \in C_b(\mathcal{X})$ and $\mu, \mu' \in \mathcal{E}.$ If $f(z) = \frac{\delta h}{\delta m}(\mu, z)$ and $g(z) = \frac{\delta h}{\delta m}(\mu', z),$ for all $z \in \mathcal{X},$ $\mu$-a.e. and $\mu'$-a.e., respectively, then
    \begin{equation*}
        D_{h^*}(f,g) = D_h(\mu', \mu).
    \end{equation*}
\end{lemma}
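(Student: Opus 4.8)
The plan is to reduce the statement to the equivalences collected in Proposition \ref{prop:first-var-argmax} and then to a short algebraic manipulation in Definition \ref{def:dual-Bregman}. First I would apply Proposition \ref{prop:first-var-argmax} twice. By hypothesis the map $z \mapsto f(z) - \frac{\delta h}{\delta m}(\mu, z)$ is constant Lebesgue a.e., which is exactly statement $(2)$ of Proposition \ref{prop:first-var-argmax} with $g^* = f$ and $m^* = \mu$; hence statements $(1)$ and $(4)$ hold as well. From $(4)$ (equivalently, from Corollary \ref{corollary:first-var-argmax}) I obtain $\mu = \frac{\delta h^*}{\delta f}(f)$, and from $(1)$ together with Definition \ref{def: convex-conjugate} — exactly as in the proof of the implication $(1)\Rightarrow(3)$ of Proposition \ref{prop:first-var-argmax} — I obtain the Legendre identity $h^*(f) = \langle f, \mu\rangle - h(\mu)$. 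Running the same argument with $g^* = g$ and $m^* = \mu'$ yields $\mu' = \frac{\delta h^*}{\delta f}(g)$ and $h^*(g) = \langle g, \mu'\rangle - h(\mu')$.

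Next I would substitute these into Definition \ref{def:dual-Bregman}. Using $\frac{\delta h^*}{\delta f}(g) = \mu'$, we get
\[
D_{h^*}(f,g) = h^*(f) - h^*(g) - \int_{\mathcal{X}}\bigl(f(x)-g(x)\bigr)\mu'(\mathrm{d}x) = h^*(f) - h^*(g) - \langle f, \mu'\rangle + \langle g, \mu'\rangle .
\]
Plugging in $h^*(f) = \langle f, \mu\rangle - h(\mu)$ and $h^*(g) = \langle g, \mu'\rangle - h(\mu')$, the terms $\langle g, \mu'\rangle$ cancel, leaving
\[
D_{h^*}(f,g) = h(\mu') - h(\mu) - \langle f, \mu' - \mu\rangle .
\]

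Finally I would dispose of the additive-constant ambiguity in $f$: since $\mu$ and $\mu'$ are probability measures, $(\mu'-\mu)(\mathcal{X}) = 0$, so $\langle c, \mu'-\mu\rangle = 0$ for any constant $c$, and hence $\langle f, \mu'-\mu\rangle = \int_{\mathcal{X}} \frac{\delta h}{\delta m}(\mu, x)(\mu'-\mu)(\mathrm{d}x)$ no matter which representative of $f$ is used. Comparing with Definition \ref{def:bregman-div} then gives $D_{h^*}(f,g) = h(\mu') - h(\mu) - \int_{\mathcal{X}} \frac{\delta h}{\delta m}(\mu, x)(\mu'-\mu)(\mathrm{d}x) = D_h(\mu', \mu)$, as claimed. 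The only genuinely delicate step is the first one — correctly harvesting both $\frac{\delta h^*}{\delta f}(g) = \mu'$ and the Legendre duality $h^*(f) = \langle f, \mu\rangle - h(\mu)$ from Proposition \ref{prop:first-var-argmax} under Assumptions \ref{assumption:assump-h}, \ref{assump:h^*-finite} and \ref{assumption:str-conv-h}, including the usual point that the supremum defining $h^*$ over $\mathcal{P}(\mathcal{X})$ is attained inside $\mathcal{E}$; once this is in place the rest is purely mechanical.
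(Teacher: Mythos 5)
Your proof is correct and follows essentially the same route as the paper: both arguments invoke Proposition \ref{prop:first-var-argmax} (together with Corollary \ref{corollary:first-var-argmax}) to obtain the Legendre identities $h^*(f)=\langle f,\mu\rangle-h(\mu)$, $h^*(g)=\langle g,\mu'\rangle-h(\mu')$ and the identification $\frac{\delta h^*}{\delta f}(g)=\mu'$, and then reduce $D_{h^*}(f,g)$ to $h(\mu')-h(\mu)-\int_{\mathcal X}\frac{\delta h}{\delta m}(\mu,z)(\mu'-\mu)(\mathrm{d}z)=D_h(\mu',\mu)$ by direct cancellation. Your explicit remark that the additive-constant ambiguity is harmless because $(\mu'-\mu)(\mathcal X)=0$ is a small but welcome addition that the paper leaves implicit.
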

\begin{proof}
By Definition \ref{def:dual-Bregman}, we have that
\begin{align*}
        &D_{h^*}(f,g) = h^*(f) - h^*(g) - \int_\mathcal{X}\left(f(z)-g(z)\right)\frac{\delta h^*}{\delta g}(g)(\mathrm{d}z)\\
        &= \langle f, \mu \rangle - h(\mu) - \langle g, \mu' \rangle + h(\mu') - \int_\mathcal{X}\left(f(z)-g(z)\right)\frac{\delta h^*}{\delta g}(g)(\mathrm{d}z)\\
        &= h(\mu') - h(\mu) + \int_{\mathcal{X}} \frac{\delta h}{\delta m}(\mu, z) \mu(\mathrm{d}z) - \int_{\mathcal{X}} \frac{\delta h}{\delta m}(\mu', z)\mu'(\mathrm{d}z) - \int_\mathcal{X}\left(\frac{\delta h}{\delta m}(\mu, z)-\frac{\delta h}{\delta m}(\mu', z)\right)\mu'(\mathrm{d}z)\\
        &= h(\mu') - h(\mu) - \int_\mathcal{X}\frac{\delta h}{\delta m}(\mu, z)(\mu'-\mu)(\mathrm{d}z)
        = D_h(\mu', \mu),
    \end{align*}
    where the second and third equalities follow from Lemma \ref{prop:first-var-argmax} and Corollary \ref{corollary:first-var-argmax}, while the last equality follows from the definition of the Bregman divergence. 
\end{proof}
\begin{lemma}
\label{lemma:primal-dual-iterates}
    Consider Algorithms \ref{eq:mirror-sim-explicit} and \ref{eq:mirror-alt}. Let Assumption \ref{assumption:assump-h} hold. Let $h^*:C_b(\mathcal{X}) \to \mathbb R$ be the convex conjugate of $h.$ For each $n \geq 0,$ fix $f^n, g^n \in C_b(\mathcal{X}),$ $\nu^n \in \mathcal{C}$ and $\mu^n \in \mathcal{D}.$ If $f^n = \frac{\delta h}{\delta \nu}(\nu^n, \cdot)$ and $g^n = \frac{\delta h}{\delta \mu}(\mu^n, \cdot),$ then, for any $n \geq 0,$ we have that
    \begin{multline*}
        D_h(\nu^{n+1}, \nu^n) = D_{h^*}(f^n, f^{n+1}), \quad D_h(\nu^{n}, \nu^{n+1}) = D_{h^*}(f^{n+1}, f^{n}),\\ D_h(\mu^{n+1}, \mu^n) = D_{h^*}(g^n, g^{n+1}), \quad D_h(\mu^{n}, \mu^{n+1}) = D_{h^*}(g^{n+1}, g^{n}).
    \end{multline*}
\end{lemma}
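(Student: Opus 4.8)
The plan is to deduce all four identities directly from Lemma \ref{lemma:primal-dual-bregman} by making the right substitutions. Recall that Lemma \ref{lemma:primal-dual-bregman} asserts, under Assumptions \ref{assumption:assump-h}, \ref{assump:h^*-finite} and \ref{assumption:str-conv-h}, that whenever $f,g \in V$ and $\mu,\mu' \in \mathcal{E}$ satisfy $f = \frac{\delta h}{\delta m}(\mu,\cdot)$ and $g = \frac{\delta h}{\delta m}(\mu',\cdot)$ (up to an additive constant), one has $D_{h^*}(f,g) = D_h(\mu',\mu)$. The hypotheses of Lemma \ref{lemma:primal-dual-iterates} include precisely Assumptions \ref{assumption:assump-h}, \ref{assump:h^*-finite}, \ref{assumption:str-conv-h}; moreover $\nu^n,\nu^{n+1} \in \mathcal{C} \subseteq \mathcal{E}$, $\mu^n,\mu^{n+1} \in \mathcal{D} \subseteq \mathcal{E}$, and $f^n,f^{n+1},g^n,g^{n+1} \in V$ by assumption, so Lemma \ref{lemma:primal-dual-bregman} applies in each of the four cases, leaving only the bookkeeping of which argument plays the role of $\mu$ and which of $\mu'$ (and hence the correct, non-symmetric, orientation of the Bregman divergences).

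First I would prove $D_h(\nu^{n+1},\nu^n) = D_{h^*}(f^n,f^{n+1})$: apply Lemma \ref{lemma:primal-dual-bregman} with $f = f^n$, $g = f^{n+1}$, $\mu = \nu^n$, $\mu' = \nu^{n+1}$, using $f^n = \frac{\delta h}{\delta \nu}(\nu^n,\cdot)$ and $f^{n+1} = \frac{\delta h}{\delta \nu}(\nu^{n+1},\cdot)$; this gives $D_{h^*}(f^n,f^{n+1}) = D_h(\nu^{n+1},\nu^n)$. Exchanging the roles of the two measures, i.e.\ taking $f = f^{n+1}$, $g = f^n$, $\mu = \nu^{n+1}$, $\mu' = \nu^n$, yields $D_{h^*}(f^{n+1},f^n) = D_h(\nu^n,\nu^{n+1})$. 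The two identities involving $g^n$ follow in exactly the same way with $g^n = \frac{\delta h}{\delta \mu}(\mu^n,\cdot)$ and $g^{n+1} = \frac{\delta h}{\delta \mu}(\mu^{n+1},\cdot)$ in place of $f^n,f^{n+1}$, and $\mu^n,\mu^{n+1}$ in place of $\nu^n,\nu^{n+1}$.

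The only subtlety to flag is the additive constant inherent in the definition of the flat derivative (Definition \ref{def:fderivative}): the equalities $f^n = \frac{\delta h}{\delta \nu}(\nu^n,\cdot)$ are meaningful only up to a constant, so one should check that $D_{h^*}$ is insensitive to such shifts. I would note that $h^*(f+c) = h^*(f) + c$ and $\frac{\delta h^*}{\delta f}(f+c) = \frac{\delta h^*}{\delta f}(f)$ for any constant $c$ (using that the elements of $\mathcal{E}$ are probability measures, so $\langle c,m\rangle = c$, and that the argmax in Corollary \ref{corollary:first-var-argmax} is unchanged), whence the constants cancel telescopically in $D_{h^*}(f',f)$. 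This renders the ``up to an additive constant'' clause of Lemma \ref{lemma:primal-dual-bregman} harmless. There is no genuine obstacle here: the entire content of Lemma \ref{lemma:primal-dual-iterates} is already packaged in Lemma \ref{lemma:primal-dual-bregman}, and the present lemma merely specializes it to the successive iterates of the schemes \eqref{eq:mirror-sim-explicit} and \eqref{eq:mirror-alt}.
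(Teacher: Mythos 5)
Your proof is correct, and its decisive step -- specializing Lemma \ref{lemma:primal-dual-bregman} with the right orientation of arguments -- is exactly the final step of the paper's proof; your bookkeeping of which measure plays the role of $\mu$ versus $\mu'$ matches the paper's conventions and yields the four identities with the correct (non-symmetric) ordering. The difference is that the paper spends most of its proof on something you omit: it first rewrites the updates \eqref{eq:mirror-sim-explicit} as $\argmax_{\nu}\{\langle f^n - \tau\frac{\delta F}{\delta \nu}(\nu^n,\mu^n,\cdot),\nu\rangle - h(\nu)\}$, derives the dual first-order conditions $f^{n+1}-f^n = -\tau\frac{\delta F}{\delta\nu}(\nu^n,\mu^n,\cdot)$, and invokes Corollary \ref{corollary:first-var-argmax} to conclude $\nu^{n+1} = \frac{\delta h^*}{\delta f}(f^{n+1})$, i.e.\ that the dual iterate produced by the scheme really is the flat derivative of $h$ at $\nu^{n+1}$ up to a constant; only then does it apply Lemma \ref{lemma:primal-dual-bregman}. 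Under the hypothesis as literally stated ($f^n = \frac{\delta h}{\delta\nu}(\nu^n,\cdot)$ for \emph{every} $n$, hence also for $n+1$), that derivation is not needed and your direct specialization is legitimate; what the paper's longer route buys is the consistency between the two possible readings of $f^{n+1}$ (flat derivative at $\nu^{n+1}$ versus dual update from $f^n$), which is what the main proof of Theorem \ref{thm:conv-alt-bregman} actually relies on via \eqref{eq:foc-alt}. Your explicit verification that $D_{h^*}$ is invariant under constant shifts ($h^*(f+c)=h^*(f)+c$ and $\frac{\delta h^*}{\delta f}(f+c)=\frac{\delta h^*}{\delta f}(f)$, so the constants cancel) is a point the paper leaves implicit and is worth keeping.
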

\begin{proof}
    First, observe that due to Assumption \ref{assumption:assump-h}, the pairs $(\nu^{n+1}, \mu^{n+1})$ in \eqref{eq:mirror-sim-explicit} and \eqref{eq:mirror-alt} are unique. We will only present the proof for \eqref{eq:mirror-sim-explicit} since the argument for \eqref{eq:mirror-alt} is identical. The updates in \eqref{eq:mirror-sim-explicit} can be equivalently written as
\begin{align}
\label{eq:scheme2}
    &\nu^{n+1} = \argmin_{\nu \in \mathcal{C}} \left\{\int_{\mathcal{X}} \frac{\delta F}{\delta \nu}(\nu^{n}, \mu^{n}, x)(\nu-\nu^n)(\mathrm{d}x) + \frac{1}{\tau} D_h(\nu, \nu^n)\right\} \nonumber\\ 
    &= \argmin_{\nu \in \mathcal{C}} \left\{\int_{\mathcal{X}} \tau \frac{\delta F}{\delta \nu}(\nu^{n}, \mu^{n}, x)(\nu-\nu^n)(\mathrm{d}x) + h(\nu) - h(\nu^n) - \int_{\mathcal{X}} \frac{\delta h}{\delta \nu}(\nu^n,x)(\nu-\nu^n)(\mathrm{d}x)\right\} \nonumber\\
    &= \argmin_{\nu \in \mathcal{C}} \left\{\int_{\mathcal{X}} \left(\tau \frac{\delta F}{\delta \nu}(\nu^n, \mu^{n}, x) - \frac{\delta h}{\delta \nu}(\nu^n,x)\right)(\nu-\nu^n)(\mathrm{d}x) + h(\nu)\right\}\\
    &= \argmax_{\nu \in \mathcal{C}} \left\{\int_{\mathcal{X}} \left(\frac{\delta h}{\delta \nu}(\nu^n,x) - \tau \frac{\delta F}{\delta \nu}(\nu^n, \mu^{n}, x)\right)(\nu-\nu^n)(\mathrm{d}x) - h(\nu)\right\} \nonumber\\
    &= \argmax_{\nu \in \mathcal{C}} \left\{\int_{\mathcal{X}} \left(\frac{\delta h}{\delta \nu}(\nu^n,x) - \tau \frac{\delta F}{\delta \nu}(\nu^n, \mu^{n}, x)\right)\nu(\mathrm{d}x) - h(\nu)\right\} \nonumber,
\end{align}
and 
\begin{align}
\label{eq:scheme3}
    &\mu^{n+1} = \argmax_{\mu \in \mathcal{D}} \left\{\int_{\mathcal{X}} \frac{\delta F}{\delta \mu}(\nu^{n}, \mu^{n}, y)(\mu-\mu^n)(\mathrm{d}y) - \frac{1}{\tau} D_h(\mu, \mu^n)\right\} \nonumber\\ 
    &= \argmax_{\mu \in \mathcal{D}} \left\{\int_{\mathcal{X}} \tau\frac{\delta F}{\delta \mu}(\nu^{n}, \mu^{n}, y)(\mu-\mu^n)(\mathrm{d}y) - h(\mu) + h(\mu^n) + \int_{\mathcal{X}} \frac{\delta h}{\delta \mu}(\mu^n, y)(\mu-\mu^n)(\mathrm{d}y)\right\} \nonumber\\
    &= \argmax_{\mu \in \mathcal{D}} \left\{\int_{\mathcal{X}} \left(\frac{\delta h}{\delta \mu}(\mu^n, y) + \tau\frac{\delta F}{\delta \mu}(\nu^{n}, \mu^{n}, y)\right)(\mu-\mu^n)(\mathrm{d}y) - h(\mu)\right\}\\
    &= \argmax_{\mu \in \mathcal{D}} \left\{\int_{\mathcal{X}} \left(\frac{\delta h}{\delta \mu}(\mu^n, y) + \tau\frac{\delta F}{\delta \mu}(\nu^{n}, \mu^{n}, y)\right)\mu(\mathrm{d}y) - h(\mu)\right\} \nonumber.
\end{align}
Using the notation $f^n = \frac{\delta h}{\delta \nu}(\nu^n, \cdot)$ and $g^n = \frac{\delta h}{\delta \mu}(\mu^n, \cdot),$ for each $n \geq 0,$ the first-order conditions for \eqref{eq:mirror-sim-explicit} in Proposition \ref{prop:foc} can be equivalently written as
\begin{equation}
\label{eq:foc-dual-nu}
    f^{n+1}(x) - f^n(x) = -\tau \frac{\delta F}{\delta \nu}(\nu^n, \mu^{n}, x),
\end{equation}
\begin{equation}
\label{eq:foc-dual-mu}
    g^{n+1}(y) - g^n(y) = \tau \frac{\delta F}{\delta \mu}(\nu^n, \mu^{n}, y),
\end{equation}
for all $(x,y) \in \mathcal{X} \times \mathcal{X},$ $\nu^{n+1}$-a.e. and $\mu^{n+1}$-a.e., respectively. Then, using \eqref{eq:first-var-legendre}, \eqref{eq:scheme2} becomes
\begin{align}
\label{eq:h^*-inverse-nu}
    \nu^{n+1} &= \argmax_{\nu \in \mathcal{C}} \left\{\int_{\mathcal{X}} \left(f^n(x) -\tau \frac{\delta F}{\delta \nu}(\nu^n, \mu^n, x)\right)\nu(\mathrm{d}x) - h(\nu)\right\} \nonumber\\ 
    &= \argmax_{\nu \in \mathcal{C}} \left\{\int_{\mathcal{X}} f^{n+1}(x)\nu(\mathrm{d}x) - h(\nu)\right\} = \frac{\delta h^*}{\delta f}(f^{n+1}),
\end{align}
for all $n \geq 0.$ Similarly, from \eqref{eq:scheme3}, we have that
\begin{equation}
\label{eq:h^*-inverse-mu}
    \mu^{n+1} = \frac{\delta h^*}{\delta f}(g^{n+1}),
\end{equation}
for all $n \geq 0.$ The conclusion follows directly from Lemma \ref{lemma:primal-dual-bregman}.
\end{proof}
\section{Convergence of the continuous-time dynamics and the MDA implicit algorithm}
\label{sec:proof-implicit-game}
In this section, we provide a formal calculation showing that the continuous-time gradient flow obtained by taking the limit $\tau \to 0$ in the dual iterative MDA schemes of Proposition \ref{prop:foc} converges at rate $\mathcal{O}(1/t)$ in NI for the time-averaged flows.

Moreover, we show that an implicit Euler discretization of this gradient flow achieves a linear convergence rate $\mathcal{O}(1/N)$, matching the continuous-time rate under the same convexity–concavity assumptions on $F$. However, this implicit scheme is not practically implementable, unlike the explicit Algorithms \ref{eq:mirror-sim-explicit} and \ref{eq:mirror-alt}.

Formally letting $\tau \to 0$ in the updates of Proposition \ref{prop:foc} yields the continuous-time flow
\begin{equation}
\begin{aligned}
\label{eq:Bregman flow}
        &\partial_t \frac{\delta h}{\delta \nu}(\nu_t,x) = -\frac{\delta F}{\delta \nu}(\nu_t, \mu_t,x), \quad \partial_t \frac{\delta h}{\delta \mu}(\mu_t,y) = \frac{\delta F}{\delta \mu}(\nu_t, \mu_t,y), \quad t > 0,
\end{aligned}
\end{equation}
with initial condition $(\nu_0, \mu_0) \in \mathcal{C} \times \mathcal{D}.$ For convenience, we assume this flow is well-posed, i.e., it admits a unique solution $(\nu_t,\mu_t)_{t\geq0}.$

For any $(\nu, \mu) \in \mathcal{C} \times \mathcal{D}$, and assuming the interchange of derivatives and integrals is valid, a direct calculation gives
\begin{align*}
\partial_t D_h(\nu, \nu_t) &= \partial_t\left( h(\nu) - h(\nu_t) - \int_\mathcal{X} \frac{\delta h}{\delta \nu}(\nu_t,x)(\nu-\nu_t)(\mathrm{d}x)\right)\\
&=-\partial_t h(\nu_t) - \partial_t \int_\mathcal{X} \frac{\delta h}{\delta \nu}(\nu_t,x)(\nu-\nu_t)(\mathrm{d}x)\\
&= -\int_\mathcal{X} \frac{\delta h}{\delta \nu}(\nu_t,x)\partial_t \nu_t(\mathrm{d}x) - \int_\mathcal{X} \partial_t\frac{\delta h}{\delta \nu}(\nu_t,x)(\nu-\nu_t)(\mathrm{d}x) - \int_\mathcal{X} \frac{\delta h}{\delta \nu}(\nu_t,x)\partial_t (\nu-\nu_t)(\mathrm{d}x)\\
&=-\int_\mathcal{X} \frac{\delta h}{\delta \nu}(\nu_t,x)\partial_t \nu_t(\mathrm{d}x) - \int_\mathcal{X} \partial_t\frac{\delta h}{\delta \nu}(\nu_t,x)(\nu-\nu_t)(\mathrm{d}x) + \int_\mathcal{X} \frac{\delta h}{\delta \nu}(\nu_t,x)\partial_t\nu_t(\mathrm{d}x)\\
&= \int_\mathcal{X} \frac{\delta F}{\delta \nu}(\nu_t, \mu_t,x)(\nu-\nu_t)(\mathrm{d}x).
\end{align*}
Following the same calculation for $D_h(\mu, \mu_t)$ we obtain
\begin{equation*}
    \partial_t D_h(\mu, \mu_t) = -\int_\mathcal{X} \frac{\delta F}{\delta \mu}(\nu_t, \mu_t,y)(\mu-\mu_t)(\mathrm{d}y).
\end{equation*}
Adding these and applying the convexity–concavity of $F$ (Assumption \ref{def: def-F-conv-conc}) yields
\begin{equation*}
    \partial_t \left(D_h(\nu, \nu_t) + D_h(\mu, \mu_t)\right) \leq F(\nu, \mu_t) - F(\nu_t, \mu_t) + F(\nu_t, \mu_t) - F(\nu_t, \mu).
\end{equation*}
Integrating, dividing by $t$ and applying Jensen's inequality to $F$ gives
\begin{equation*}
    F\left(\frac{1}{t}\int_0^t \nu_s\mathrm{d}s, \mu\right) - F\left(\nu, \frac{1}{t}\int_0^t \mu_s\mathrm{d}s\right) \leq \frac{1}{t}\left(\sup_{\nu \in \mathcal{C}}D_h(\nu, \nu_0) + \sup_{\mu \in \mathcal{D}}D_h(\mu, \mu_0)\right).
\end{equation*}
Hence, maximizing over $(\nu, \mu),$ we conclude that
\begin{equation*}
    \operatorname{NI}\left(\frac{1}{t}\int_0^t \nu_s\mathrm{d}s, \frac{1}{t}\int_0^t \mu_s\mathrm{d}s\right) \leq \frac{1}{t}\left(\sup_{\nu \in \mathcal{C}}D_h(\nu, \nu_0) + \sup_{\mu \in \mathcal{D}}D_h(\mu, \mu_0)\right),
\end{equation*}
establishing the $\mathcal{O}(1/t)$ rate. 

We now turn to the implicit MDA scheme. For a given stepsize $\tau > 0,$ and fixed initial pair of strategies $(\nu_0, \mu_0) \in \mathcal{C} \times \mathcal{D},$ for $n \geq 0,$ the \textit{implicit} MDA algorithm is defined by
\begin{algorithm}[!h] 
   \caption{\textsc{Implicit MDA}}
   \label{eq:mirror-sim-fp}
   \KwIn{Objective function F, initial measures $(\nu_0, \mu_0)$, stepsize $\tau > 0$}
   \For{$n =0, 1, \dots ,N-1$}
   {
    $\nu^{n+1} = \argmin\limits_{\nu \in \mathcal{C}} \{\int_{\mathcal{X}} \frac{\delta F}{\delta \nu}(\nu^n, \mu^{n+1}, x)(\nu-\nu^n)(\mathrm{d}x) + \frac{1}{\tau} D_h(\nu, \nu^n)\},$
    
    $\mu^{n+1} = \argmax\limits_{\mu \in \mathcal{D}} \{\int_{\mathcal{X}} \frac{\delta F}{\delta \mu}(\nu^{n+1}, \mu^{n}, y)(\mu-\mu^n)(\mathrm{d}y) - \frac{1}{\tau} D_h(\mu, \mu^n)\}$
   }
\KwOut{$\left(\frac{1}{N}\sum_{n=0}^{N-1}\nu^{n+1},\frac{1}{N}\sum_{n=0}^{N-1}\mu^n\right)$}
\end{algorithm}
\begin{theorem}[Convergence of the implicit MDA Algorithm \ref{eq:mirror-sim-fp}]
    Let $(\nu^0, \mu^0)$ be such that $\sup_{\nu \in \mathcal{C}} D_h(\nu, \nu^0) + \sup_{\mu \in \mathcal{D}} D_h(\mu, \mu^0) < \infty.$ Let Assumption \ref{assumption:assump-h}, \ref{def: def-F-conv-conc} and \ref{def:relative-smoothness} hold. Suppose that $\tau L \leq 1,$ where $L \coloneqq \max\{L_{\nu}, L_{\mu}\}.$ Then, we have
    \begin{equation*}
        \operatorname{NI}\left(\frac{1}{N}\sum_{n=0}^{N-1}\nu^{n+1}, \frac{1}{N}\sum_{n=0}^{N-1}\mu^{n+1}\right) \leq \frac{1}{N\tau}\left(\sup_{\nu \in \mathcal{C}} D_h(\nu, \nu^0) + \sup_{\mu \in \mathcal{D}} D_h(\mu, \mu^0)\right).
    \end{equation*}
\end{theorem}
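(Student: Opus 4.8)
The plan is to mirror the proof of Theorem~\ref{thm: conv-sim-bregman}, but to exploit that the implicit structure of \eqref{eq:mirror-sim-fp} lets the relative-smoothness inequalities act in our favour rather than merely encoding the competition between the players; this is what makes Assumption~\ref{assumption:F-lip} and Lemma~\ref{lemma:tau2-bound-primal} dispensable and upgrades the rate to $\mathcal{O}(1/N)$. (We take for granted, as the definition of the scheme presupposes, that iterates $(\nu^n,\mu^n)_{n\ge0}$ solving \eqref{eq:mirror-sim-fp} exist.)

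First, for the minimising player I would apply the three-point inequality (Lemma~\ref{lemma:Bregman-prox-ineq}) to the linear, hence convex, map $\nu\mapsto\tau\int_\mathcal{X}\frac{\delta F}{\delta\nu}(\nu^n,\mu^{n+1},x)(\nu-\nu^n)(\mathrm{d}x)$ with $\bar\nu=\nu^{n+1}$ and base point $\nu^n$, obtaining for every $\nu\in\mathcal{C}$
\begin{multline*}
\tau\int_\mathcal{X}\frac{\delta F}{\delta\nu}(\nu^n,\mu^{n+1},x)(\nu^{n+1}-\nu^n)(\mathrm{d}x)\le\tau\int_\mathcal{X}\frac{\delta F}{\delta\nu}(\nu^n,\mu^{n+1},x)(\nu-\nu^n)(\mathrm{d}x)\\
+D_h(\nu,\nu^n)-D_h(\nu^{n+1},\nu^n)-D_h(\nu,\nu^{n+1}).
\end{multline*}
Then I would bound the right-most gradient integral above using convexity \eqref{eq:convexF} of $\nu\mapsto F(\nu,\mu^{n+1})$ at the points $\nu^n,\nu$, bound the left-hand gradient integral below by $L_\nu$-relative smoothness of $\nu\mapsto F(\nu,\mu^{n+1})$ at $\nu^n,\nu^{n+1}$, cancel the common $F(\nu^n,\mu^{n+1})$ terms, and finally absorb $(\tau L_\nu-1)D_h(\nu^{n+1},\nu^n)\le0$ using $\tau L_\nu\le\tau L\le1$ and $D_h\ge0$, arriving at
\[\tau\bigl(F(\nu^{n+1},\mu^{n+1})-F(\nu,\mu^{n+1})\bigr)\le D_h(\nu,\nu^n)-D_h(\nu,\nu^{n+1}).\]
A symmetric argument for the maximising player — Lemma~\ref{lemma:Bregman-prox-ineq} applied to $\mu\mapsto-\tau\int_\mathcal{X}\frac{\delta F}{\delta\mu}(\nu^{n+1},\mu^n,y)(\mu-\mu^n)(\mathrm{d}y)$, concavity \eqref{eq:concaveF} of $\mu\mapsto F(\nu^{n+1},\mu)$, $L_\mu$-relative smoothness, and $\tau L_\mu\le1$ — gives $\tau\bigl(F(\nu^{n+1},\mu)-F(\nu^{n+1},\mu^{n+1})\bigr)\le D_h(\mu,\mu^n)-D_h(\mu,\mu^{n+1})$.

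Next I would add the two inequalities (the $F(\nu^{n+1},\mu^{n+1})$ terms cancel), sum over $n=0,\dots,N-1$ so that the Bregman terms telescope and the remainder $-D_h(\nu,\nu^N)-D_h(\mu,\mu^N)\le0$ is dropped, divide by $N\tau$, apply Jensen's inequality exactly as in \eqref{eq:NI-Jensen} to move the time averages inside $F$, and take the supremum over $(\nu,\mu)\in\mathcal{C}\times\mathcal{D}$, which produces the stated bound.

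The only genuinely delicate point is the bookkeeping of the evaluation arguments. Because \eqref{eq:mirror-sim-fp} evaluates $\frac{\delta F}{\delta\nu}$ at $\mu^{n+1}$ and $\frac{\delta F}{\delta\mu}$ at $\nu^{n+1}$, both the convexity/concavity and the relative-smoothness inequalities can be used along the segments $[\nu^n,\nu^{n+1}]$ with $\mu$ frozen at $\mu^{n+1}$ and $[\mu^n,\mu^{n+1}]$ with $\nu$ frozen at $\nu^{n+1}$, so that the Bregman term $D_h(\nu^{n+1},\nu^n)$ generated by relative smoothness exactly meets the one generated by the three-point inequality and is eliminated under $\tau L\le1$; no residual $\mathcal{O}(\tau^2)$ contribution survives, which is the structural reason the rate is $\mathcal{O}(1/N)$ rather than $\mathcal{O}(1/\sqrt N)$. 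For the explicit schemes \eqref{eq:mirror-sim-explicit}--\eqref{eq:mirror-alt} the mismatched arguments break this cancellation and force the use of Assumption~\ref{assumption:F-lip} together with Lemma~\ref{lemma:tau2-bound-primal}, which is precisely what degrades the rate there.
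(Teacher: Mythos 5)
Your proposal is correct and follows essentially the same route as the paper's own proof: the three-point inequality applied to the linearized objective with the implicit evaluation points $\mu^{n+1}$ and $\nu^{n+1}$, combined with convexity-concavity and relative smoothness so that the $D_h(\nu^{n+1},\nu^n)$ and $D_h(\mu^{n+1},\mu^n)$ terms cancel under $\tau L\le1$, followed by telescoping, Jensen's inequality, and taking the supremum. Your observation about why the matched evaluation arguments eliminate the residual term (and hence the need for Assumption~\ref{assumption:F-lip} and Lemma~\ref{lemma:tau2-bound-primal}) accurately captures the structural reason for the improved $\mathcal{O}(1/N)$ rate.
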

\begin{proof}
Since $\nu \mapsto \tau \int \frac{\delta F}{\delta \nu}(\nu^n,\mu^{n+1},x)(\nu-\nu^n)(\mathrm{d}x)$ is convex, applying Lemma \ref{lemma:Bregman-prox-ineq} with $\Bar{\nu} = \nu^{n+1}$ and $\mu = \nu^n$ implies that, for any $\nu \in \mathcal{C},$ we have
\begin{multline*}
    \tau \int \frac{\delta F}{\delta \nu}(\nu^n,\mu^{n+1},x)(\nu-\nu^n)(\mathrm{d}x) + D_h(\nu, \nu^n) \geq \tau \int \frac{\delta F}{\delta \nu}(\nu^n,\mu^{n+1},x)(\nu^{n+1}-\nu^n)(\mathrm{d}x)\\ + D_h(\nu^{n+1}, \nu^n) + D_h(\nu, \nu^{n+1}),
\end{multline*}
or, equivalently,
\begin{multline}
\label{eq: bregpp-nu1}
    -\tau \int \frac{\delta F}{\delta \nu}(\nu^n,\mu^{n+1},x)(\nu-\nu^n)(\mathrm{d}x) - D_h(\nu, \nu^n) \leq -\tau \int \frac{\delta F}{\delta \nu}(\nu^n,\mu^{n+1},x)(\nu^{n+1}-\nu^n)(\mathrm{d}x)\\ - D_h(\nu^{n+1}, \nu^n) - D_h(\nu, \nu^{n+1}).
\end{multline}
Similarly, since $\mu \mapsto -\tau \int \frac{\delta F}{\delta \mu}(\nu^{n+1},\mu^n,y)(\mu-\mu^n)(\mathrm{d}y)$ is convex, applying Lemma \ref{lemma:Bregman-prox-ineq} with $\Bar{\nu} = \mu^{n+1}$ and $\mu = \mu^n$ implies that, for any $\mu \in \mathcal{D},$ we have 
\begin{multline}
\label{eq: bregpp-mu1}
    \tau \int \frac{\delta F}{\delta \mu}(\nu^{n+1},\mu^n,y)(\mu-\mu^n)(\mathrm{d}y) - D_h(\mu, \mu^n) \leq \tau \int \frac{\delta F}{\delta \mu}(\nu^{n+1},\mu^n,y)(\mu^{n+1}-\mu^n)(\mathrm{d}y)\\ 
    - D_h(\mu^{n+1}, \mu^n) - D_h(\mu, \mu^{n+1}).
\end{multline}
Using the convexity of $\nu \mapsto F(\nu,\mu)$ in \eqref{eq: bregpp-nu1}, with $\nu= \nu^n$ and $\mu = \mu^{n+1},$
we have that
\begin{multline}
\label{eq:new-eq3}
    F(\nu^n, \mu^{n+1}) - F(\nu, \mu^{n+1}) - \frac{1}{\tau}D_h(\nu, \nu^n) \leq \int_\mathcal{X} \frac{\delta F}{\delta \nu}(\nu^n,\mu^{n+1},x)(\nu^{n}-\nu^{n+1})(\mathrm{d}x)\\ - \frac{1}{\tau}D_h(\nu^{n+1}, \nu^n) - \frac{1}{\tau} D_h(\nu, \nu^{n+1}).
\end{multline}
From $L_{\nu}$-relative smoothness and the fact that $\tau L \leq 1,$ it follows that
    \begin{multline}
    \label{eq:new-Lnu-smooth}
        F(\nu^{n+1}, \mu^{n+1}) \leq F(\nu^n, \mu^{n+1}) + \int_{\mathcal{X}}\frac{\delta F}{\delta \nu}(\nu^n, \mu^{n+1}, x)(\nu^{n+1}-\nu^n)(\mathrm{d}x) + L_{\nu}D_h(\nu^{n+1}, \nu^n)\\
        \leq F(\nu^n, \mu^{n+1}) + \int_{\mathcal{X}}\frac{\delta F}{\delta \nu}(\nu^n, \mu^{n+1}, x)(\nu^{n+1}-\nu^n)(\mathrm{d}x) + \frac{1}{\tau}D_h(\nu^{n+1}, \nu^n).
    \end{multline}
Hence, combining \eqref{eq:new-eq3} with \eqref{eq:new-Lnu-smooth}, we obtain that
\begin{multline}
\label{eq:new-firstpart}
    F(\nu^n, \mu^{n+1}) - F(\nu, \mu^{n+1}) - \frac{1}{\tau}D_h(\nu, \nu^n) \leq F(\nu^n, \mu^{n+1}) - F(\nu^{n+1}, \mu^{n+1}) - \frac{1}{\tau} D_h(\nu, \nu^{n+1}).
\end{multline}
Similarly, using concavity of $\mu \mapsto F(\nu, \mu)$ in \eqref{eq: bregpp-mu1},  with $\nu= \nu^{n+1}$ and $\mu = \mu^n,$ we have that
\begin{multline}
\label{eq:new-eq4}
    F(\nu^{n+1}, \mu) - F(\nu^{n+1}, \mu^n) - \frac{1}{\tau}D_h(\mu, \mu^n) \leq \int_\mathcal{X} \frac{\delta F}{\delta \mu}(\nu^{n+1},\mu^n,y)(\mu^{n+1}-\mu^n)(\mathrm{d}y)\\ - \frac{1}{\tau}D_h(\mu^{n+1}, \mu^n) - \frac{1}{\tau}D_h(\mu, \mu^{n+1}).
\end{multline}
From $L_{\mu}$-relative smoothness and the fact that $\tau L \leq 1,$ it follows that
    \begin{multline}
    \label{eq:new-Lmu-smooth}
        F(\nu^{n+1}, \mu^{n+1}) \geq F(\nu^{n+1}, \mu^n) + \int_{\mathcal{X}}\frac{\delta F}{\delta \mu}(\nu^{n+1}, \mu^n, y)(\mu^{n+1}-\mu^n)(\mathrm{d}y) - L_{\mu}D_h(\mu^{n+1}, \mu^n)\\ \geq F(\nu^{n+1}, \mu^n) + \int_{\mathcal{X}}\frac{\delta F}{\delta \mu}(\nu^{n+1}, \mu^n, y)(\mu^{n+1}-\mu^n)(\mathrm{d}y) - \frac{1}{\tau}D_h(\mu^{n+1}, \mu^n).
    \end{multline}
Hence, combining \eqref{eq:new-eq4} with \eqref{eq:new-Lmu-smooth}, we obtain that
\begin{equation}
\label{eq:new-secondpart}
    F(\nu^{n+1}, \mu) - F(\nu^{n+1}, \mu^n) - \frac{1}{\tau}D_h(\mu, \mu^n) \leq F(\nu^{n+1}, \mu^{n+1}) - F(\nu^{n+1}, \mu^{n}) - \frac{1}{\tau}D_h(\mu, \mu^{n+1}).
\end{equation}
Adding inequalities \eqref{eq:new-firstpart} and \eqref{eq:new-secondpart} implies that
\begin{multline*}
    F(\nu^{n+1}, \mu) - F(\nu, \mu^{n+1}) \leq F(\nu^{n+1}, \mu^{n+1}) - F(\nu^{n+1}, \mu^{n+1})\\ 
    + \frac{1}{\tau}D_h(\nu, \nu^n) + \frac{1}{\tau}D_h(\mu, \mu^n) - \frac{1}{\tau} D_h(\nu, \nu^{n+1}) - \frac{1}{\tau}D_h(\mu, \mu^{n+1})
\end{multline*}
Summing the previous inequality over $n=0,1,...,N-1,$ bounding the right-hand side from above by its supremum over $(\nu,\mu),$ dividing by $N,$ applying Jensen's inequality and taking maximum over $(\nu,\mu)$ in the left-hand side leads to
\begin{equation*}
    \operatorname{NI}\left(\frac{1}{N}\sum_{n=0}^{N-1}\nu^{n+1}, \frac{1}{N}\sum_{n=0}^{N-1}\mu^{n+1}\right) \leq \frac{1}{N\tau} \left(\sup_{\nu \in \mathcal{C}}D_h(\nu, \nu^0) + \sup_{\mu \in \mathcal{D}} D_h(\mu, \mu^0)\right),
\end{equation*}
where the last inequality follows since $D_h(\nu, \nu^{N}) + D_h(\mu, \mu^{N}) \geq 0,$ for all $(\nu,\mu) \in \mathcal{C} \times \mathcal{D}.$
\end{proof}
\section{Further related works}
\label{app:further-works}
\paragraph{Comparison with Wasserstein MD and other mean-field min-max methods.}
Our work is also related to recent developments on optimization over spaces of probability measures. A first point of comparison is \cite{bonet2024mirror}, who study mirror descent on $\mathcal P_2(\mathbb R^d)$ using the Wasserstein geometry. By contrast, we work with the flat geometry and study min-max problems over $\mathcal P(\mathbb R^d)$. These two geometries are complementary and are suited to different goals.

In the flat geometry, convexity-concavity and relative smoothness are formulated along linear interpolations of measures $[0,1]\ni \varepsilon \mapsto (1-\varepsilon)\mu_0+\varepsilon\mu_1$. This formulation is aligned with our MDA analysis, since it interacts naturally with the duality pairing between measures and functions. It also leads to assumptions that are relatively easy to verify in several motivating examples, including GANs and RLHF. In the Wasserstein geometry, by contrast, convexity and smoothness are typically formulated along transport curves, in particular along geodesics of the form $[0,1]\ni \varepsilon \mapsto \big((1-\varepsilon)\operatorname{Id}
+\varepsilon \operatorname{T}_{\mu_0}^{\mu_1}\big)_{\#}\mu_0,$ where $\operatorname{T}_{\mu_0}^{\mu_1}$ denotes an optimal transport map from $\mu_0$ to $\mu_1$. This structure is natural for transport-based algorithms, but it imposes substantially different requirements. Such maps need not exist in general. For example, a standard sufficient condition is that $\mu_0$ is absolutely continuous with respect to Lebesgue measure. Moreover, for bilinear objective, i.e., $F(\nu,\mu)=\iint f(x,y)\nu(\mathrm dx)\mu(\mathrm dy)$, Wasserstein geodesic convexity-concavity would impose Euclidean convexity-concavity conditions on $f$. This would exclude several of our main motivating examples, such as GANs, RLHF, and adversarial training of mean-field neural networks, unless additional restrictive assumptions were imposed.

More generally, Wasserstein regularity is stronger than flat regularity. The Wasserstein gradient is expressed in terms of $\nabla \frac{\delta F}{\delta \mu}$, and therefore requires differentiability of the flat derivative itself. This is often delicate and can be difficult to verify in applications such as RLHF. Furthermore, certain important functionals, including relative entropy, are not Wasserstein differentiable in general and require subgradient formulations, as discussed in \cite{bonet2024mirror}. For these reasons, the flat geometry is better suited to the assumptions used in our MDA analysis. Relatedly, Remark \ref{remark: proof of thm 2.1} explains why replacing the $\operatorname{TV}^2$ strong convexity assumption on the mirror potential $h$ by a $W_2^2$ condition would substantially reduce the generality of our results and would require stronger assumptions, such as Talagrand inequalities along the iterates.

The limitation of the flat geometry is that it does not include the transport structure intrinsic to Wasserstein space. In the Wasserstein setting, an update may be represented by a transport map $\operatorname{T}^{n+1}$ and written as $\mu^{n+1}=(\operatorname{T}^{n+1})_{\#}\mu^n$, so the algorithm directly moves particles from $\mu^n$ to $\mu^{n+1}$. This perspective is particularly natural for particle-based algorithms and for objectives involving Sinkhorn divergences, sliced-Wasserstein distances, or interaction energies, as in \cite{bonet2024mirror}. In contrast, the flat derivative $\frac{\delta F}{\delta \mu}$ describes infinitesimal signed perturbations of the measure. This is the appropriate object for our analysis, but it does not capture the geodesic transport structure exploited by Wasserstein methods.

Another related contribution is \cite{pmlr-v291-cai25a}, which studies an entropy-regularized bilinear game over distributions using min-max Langevin dynamics and finite-particle approximations. Their setting assumes a four-times differentiable, strongly convex-strongly concave payoff with Lipschitz gradient. In contrast, we study simultaneous and alternating MDA schemes for unregularized convex-concave min-max problems on spaces of measures, under relative smoothness and mirror geometry. In particular, in the bilinear case we do not assume convexity-concavity of $f$, nor do we require $f$ to be differentiable or to have Lipschitz gradient. A sufficient condition for Assumption \ref{assumption:bddflatF} is simply boundedness of $f$. As discussed in Remark \ref{remark:bilinear-games}, relative smoothness is automatic for bilinear games in our framework, without additional regularity assumptions on $f$. Thus, the regularity requirements in our setting are substantially weaker.

The guarantees are also different in nature. Our results concern simultaneous and alternating MDA, with performance measured by the Nikaido--Isoda error. A main contribution of our paper is to show that, in our measure space setting, alternating MDA improves the rate from $\mathcal O(N^{-1/2})$ for simultaneous MDA to $\mathcal O(N^{-2/3})$. In this sense, the results of \cite{pmlr-v291-cai25a} neither imply nor are implied by ours since they address a different algorithmic and analytical regime.

Finally, \cite{kim2024symmetric} study an entropy-regularized distributional minimax problem and analyze Wasserstein descent-ascent dynamics with historical averaging. Their setting differs from ours in several respects. First, they consider a regularized Wasserstein dynamics, whereas we study unregularized convex-concave games on spaces of measures under mirror geometry. Second, their Theorem 3.4 gives continuous-time rates, that is, the unweighted averaging scheme achieves an $\mathcal O(\log T/T)$ rate, while the weighted averaging scheme achieves an $\mathcal O(1/T)$ rate. These continuous-time guarantees are not directly comparable to our discrete-time MDA rates, especially because their method is a history-averaged Wasserstein dynamics rather than an MDA scheme. As discussed further in Appendix \ref{sec:proof-implicit-game}, continuous-time rates should be interpreted as idealized limits and do not automatically translate into achievable rates for implementable discrete-time algorithms.

Moreover, the discrete-time result of \cite{kim2024symmetric}, stated in their Theorem 3.7 is a propagation-of-chaos result controlling time and particle approximations of the continuous-time dynamics. Our paper does not address propagation-of-chaos guarantees. Instead, our contribution is to establish convergence rates for explicit simultaneous and alternating MDA schemes on measure spaces, and to prove that alternating updates yield a faster rate than simultaneous updates in this framework. Thus, while \cite{kim2024symmetric} provides important results for regularized Wasserstein dynamics, it does not directly imply that the rates obtained here are suboptimal.\\

\paragraph{Comparison with infinite-dimensional Mirror Prox.} Besides the MDA algorithm, \cite{pmlr-v97-hsieh19b} considers the entropic Mirror Prox algorithm, which is computationally more expensive than simultaneous MDA since it requires two mirror descent steps per player per iteration, effectively doubling the number of gradient evaluations. Although \cite{pmlr-v97-hsieh19b} proves it achieves a $\mathcal{O}(N^{-1})$ convergence rate with deterministic gradients, in the stochastic case it attains only the same $\mathcal{O}(N^{-1/2})$ rate as simultaneous MDA. Since the stochastic setting is precisely the one arising in the particle-based algorithmic implementation, even if the theoretical infinite-dimensional Mirror Prox analysis yielded a $\mathcal{O}(N^{-1})$ rate, the corresponding particle system would still converge at best at $\mathcal{O}(N^{-1/2})$. 

Another approach based on reproducing kernel Hilbert spaces (RKHS) is developed in \cite{dvurechensky2024analysis} and achieves the same convergence rates $\mathcal{O}\left(N^{-1}\right)$ and $\mathcal{O}\left(N^{-1/2}\right)$ for the deterministic and stochastic Mirror Prox algorithm, respectively. Finally, we note that the convergence analysis of an alternating Mirror Prox scheme remains, to our knowledge, open. Designing how to alternate the four mirror descent steps in a way that improves convergence rates appears to be technically delicate. We view this as a potential direction for future work.

Extending our analysis to the Mirror Prox scheme is not straightforward due to the following technical issue. To replicate the assumptions on the objective $F$ used in \cite[Section 5.2.3]{bubeck2015convex} or \cite[Assumption 4.2]{dvurechensky2024analysis}, one would need to impose the following $\operatorname{TV}$-Lipschitz condition on the flat derivatives of $F$. Specifically, there exist $C_{11}, C_{12}, C_{21}, C_{22} > 0$ such that 
\begin{equation*}
    \left|\frac{\delta F}{\delta \nu}(\nu',\mu,x') - \frac{\delta F}{\delta \nu}(\nu,\mu,x)\right| \leq C_{11}(\operatorname{TV}(\nu',\nu) + |x'-x|),
\end{equation*}
\begin{equation*}
    \left|\frac{\delta F}{\delta \nu}(\nu,\mu',x') - \frac{\delta F}{\delta \nu}(\nu,\mu,x)\right| \leq C_{12}(\operatorname{TV}(\mu',\mu) + |x'-x|),
\end{equation*}
\begin{equation*}
    \left|\frac{\delta F}{\delta \mu}(\nu',\mu,y') - \frac{\delta F}{\delta \mu}(\nu,\mu,y)\right| \leq C_{21}(\operatorname{TV}(\nu',\nu) + |y'-y|).
\end{equation*}
\begin{equation*}
    \left|\frac{\delta F}{\delta \mu}(\nu,\mu',y') - \frac{\delta F}{\delta \mu}(\nu,\mu,y)\right| \leq C_{22}(\operatorname{TV}(\mu',\mu) + |y'-y|).
\end{equation*}
This $\operatorname{TV}$-Lipschitz assumption is substantially stronger than the relative smoothness condition (Assumption \ref{def:relative-smoothness}), which is needed only for the alternating scheme but not for the simultaneous one. For instance, the bilinear game
\begin{equation*}
    F(\nu, \mu) = \int_\mathcal{X} \int_\mathcal{X} f(x,y)\nu(\mathrm{d}x)\mu(\mathrm{d}y)
\end{equation*} 
satisfies Assumption \ref{def:relative-smoothness} for $L_\nu = L_\mu = 0,$ yet it satisfies the $\operatorname{TV}$-Lipschitz conditions above only when $f$ is both bounded and Lipschitz. Thus, showing that Mirror Prox achieves an $\mathcal{O}(N^{-1})$ convergence rate under the weaker relative smoothness assumption remains a challenging open question. \\

\paragraph{Comparison with Euclidean alternating updates.}
A closely related line of work studies alternating methods for finite-dimensional bilinear zero-sum games. In particular, \cite{katona} analyze alternating mirror descent (AMD) from a symplectic/Hamiltonian perspective. Their $\mathcal{O}(N^{-4/5})$ bound on the average-iterate duality gap is obtained by viewing AMD as a symplectic Euler discretization, constructing a modified Hamiltonian, and exploiting high-order smoothness properties of the convex conjugate $h^*$ associated with the Bregman potential.

The techniques in \cite{katona}, however, appear to rely in an essential way on finite-dimensional Euclidean structure. The symplectic Euler interpretation uses a finite-dimensional representation in which AMD is related to a symplectic scheme through an invertible linear transformation. Moreover, the modified Hamiltonian is constructed using Poisson brackets, which are defined for sufficiently smooth functions on Euclidean spaces. Their argument also involves an infinite-series representation of the modified Hamiltonian and requires delicate control of this series, established only in certain regimes. Consequently, extending this approach to our setting would require infinite-dimensional analogues of several nontrivial objects, including a suitable notion of symplectic integrator on spaces of measures, an associated Hamiltonian formalism, and an appropriate version of Poisson brackets. Developing such a framework is an interesting direction, but it is not a direct consequence of the methods used here. Thus, the sharper rate in \cite{katona} exploits finite-dimensional bilinear structure that is not available in our general mean-field convex-concave setting.

Another related result is due to \cite{nan2026on}, who study alternating projected gradient descent-ascent (Alt-GDA) for bilinear games over compact convex sets. They prove a global $\mathcal{O}(1/T)$ rate under the assumption that an interior Nash equilibrium exists, and a local $\mathcal{O}(1/T)$ rate near a maximum-support equilibrium when this assumption is removed. This result is specific to Euclidean projected GDA rather than mirror descent. In particular, it corresponds to the quadratic Bregman potential $\frac12\|\cdot\|^2$ and does not immediately extend to general Bregman geometries, even in finite dimensions. In our setting, the updates are defined on spaces of measures, where there is no canonical Euclidean gradient step and the analysis must instead be formulated through the duality pairing between measures and functions. Hence the connection between Euclidean projected GDA and mirror descent-type dynamics on measure spaces is far from immediate.

Furthermore, the analysis of \cite{nan2026on} uses structural properties of finite-dimensional simplices. Concepts such as interior Nash equilibria and maximum-support equilibria play a central role in both their assumptions and their proofs, and their admissible stepsize condition also depends on this finite-dimensional simplex structure. Therefore, as in the comparison with \cite{katona}, the sharper guarantees in \cite{nan2026on} are obtained in a substantially more specialized setting using tools that do not transfer to infinite-dimensional mean-field games over measures.

Overall, the results of \cite{katona} and \cite{nan2026on} show that sharper convergence rates for alternating methods are possible in special finite-dimensional bilinear regimes. Our setting is different in that it concerns general convex-concave games over spaces of measures. Extending such improved rates to this level of generality would require new geometric and analytic tools, and remains a highly nontrivial open direction.


\end{document}